\documentclass[12pt,reqno]{amsart}
\AtBeginDocument{\addtocontents{toc}{\protect\setlength{\parskip}{0pt}}}

\usepackage{amsfonts}
\usepackage{dsfont}
\usepackage{amsmath}
\usepackage{amssymb}
\usepackage{mathrsfs}
\usepackage{url}
\usepackage{caption}
\usepackage{enumitem}
\usepackage{graphicx, subfig}
\usepackage[breaklinks]{hyperref}
\usepackage{tikz-cd}
\usetikzlibrary{calc}
\newtheorem{theorem}{Theorem}[section]
\newtheorem{lemma}[theorem]{Lemma}
\newtheorem{proposition}[theorem]{Proposition}

\theoremstyle{definition}
\newtheorem{definition}[theorem]{Definition}
\newtheorem{example}[theorem]{Example}
\newtheorem{corollary}[theorem]{Corollary}

\theoremstyle{remark}
\newtheorem{remark}[theorem]{Remark}

\numberwithin{equation}{section}

\begin{document}

\title[anisotropic FUP and open quantum map]
{Anisotropic 2D FUP and  quantum open baker's map}

\author{Long Jin, An Zhang and Hong Zhang}

\address{Yau Mathematical Sciences Center, Jingzhai, Tsinghua University, Haidian District, Beijing, 100084, P.R.China}
\email{jinlong@mail.tsinghua.edu.cn}
\email{hong-zha23@mails.tsinghua.edu.cn}
\address{School of Mathematical Sciences, Beihang University, 37 Xueyuan road, Haidian district, Beijing, 100191, P.R.China}
\email{anzhang@buaa.edu.cn}

\date{\today. \ \emph{2020 AMS MSC.}   28A80, 42B10, 43A32, 58J50, 58J40, 81Q50, 81Q12. \\ \emph{Keywords.}   anisotropic quantum open baker's map, spectral gap for quantum chaos, anisotropic fractal uncertainty principle,  anisotropic porosity, Bedford–McMullen carpet.}
\maketitle
\begin{abstract}
We prove an \emph{essential spectral gap} for 2D \emph{anisotropic} quantum open baker's map. This extends the 1D results of Dyatlov--Jin \cite{dyatlov2017resonances} and the isotropic 2D results of Cohen \cite{Cohen}. The key ingredient is the \emph{anisotropic discrete fractal uncertainty principle (FUP)} associated with a 2D anisotropic fractal set called
the \emph{Bedford–McMullen carpet}.  We also study the relation between our anisotropic discrete FUP and its continuous counterpart in the spirit of Dyatlov--Jin \cite{dyatlov2018dolgopyat} and \cite{Cohen}. In particular, we prove \emph{continuous FUP} for 2D \emph{anisotropic porous} sets, extending the (high-dimensional) isotropic results of Cohen \cite{MR4927737}. To the best of our knowledge, the anisotropic (line) porosity condition---a variant of Cohen's line porosity and stronger than ball porosity---appears to be new to the literature. 
\end{abstract}

\tableofcontents  

\section{Introduction}

The study of quantum chaos seeks to understand how chaotic classical dynamics manifest in quantum mechanical properties. For open quantum chaotic systems, it is expected that the uniform hyperbolicity of the trapped set leads to a positive essential spectral gap for the imaginary part of the scattering resonances (see, e.g., \cite[Conjecture 3]{zworski2017mathematical}). This has been confirmed for convex cocompact hyperbolic surfaces by Bourgain--Dyatlov \cite{bourgain2018spectral}, for two-dimensional Gaspard--Rice system (see \cite{gaspard1989scattering}) by Vacossin \cite{MR4736525}, and for negatively curved asymptotically hyperbolic surfaces 
by Tao \cite{tao2024spectral}. These results rely on the one-dimensional fractal uncertainty principle (FUP), which was proposed by Dyatlov--Zahl \cite{dyatlov2016spectral}. For higher-dimensional models, the recent breakthrough by Cohen \cite{MR4927737} on the FUP in higher dimensions implies a positive essential spectral gap for a large family of convex cocompact hyperbolic manifolds  (see Tao \cite{Tao2025PhD} and the authors \cite{jin2026quantitative}). 
However, in general, an open quantum chaotic system in higher dimensions (e.g., 3D Gaspard--Rice model) exhibits anisotropic properties, meaning that there are different Lyapunov exponents    along different expanding directions. It seems that previous works on FUP are not directly applicable to such systems. 

In this paper, we prove an essential spectral gap for a discrete model, namely the 2D anisotropic quantum open baker's map whose trapped set is the Bedford--McMullen carpet \cite{MR771063}, which is the simplest anisotropic fractal set, having different scaling properties in different directions. This extends the one-dimensional results of Dyatlov--Jin \cite{dyatlov2017resonances} and the isotropic version of Cohen \cite{Cohen} in two dimensions.

\vskip2mm
\noindent\textbf{Notation.} For $M\in\mathbb N$, $\mathbb Z_M=\mathbb Z/M\mathbb Z=\{0,\dots,M-1\}$.
For vectors $\mathbf M=(M_1,M_2)$ and $\mathbf N=(N_1,N_2)$ in $\mathbb R^2$, we use the following notations:
\[\mathbf M\mathbf N=(M_1N_1, M_2N_2),\quad \mathbf M/\mathbf N=(M_1/N_1, M_2/N_2),\quad \mathbf M\cdot\mathbf N=M_1N_1+M_2N_2.\]
If $\mathbf N\in\mathbb N^2$, set $\mathbb Z_{\mathbf N}:=\mathbb Z_{N_1}\times\mathbb Z_{N_2}$. Then $\ell_{\mathbf N}^2=\ell^2(\mathbb Z_{\mathbf N})$ denotes the space of functions $u:\mathbb Z_{\mathbf N}\to\mathbb C$ for which the Hilbert norm
\[\|u\|_{\ell_{\mathbf N}^2}^2=\sum_{\mathbf j\in\mathbb Z_{\mathbf N}}|u(\mathbf j)|^2<\infty.\]

We freely use the Hardy, Landau, Poincar\'e and Vinogradov  notations: $X\le C_kY$, $X=O_k(Y)$, $X\ll Y$ and $X\lesssim_k Y$ (and also $X\sim_k Y$).

\subsection{Anisotropic quantum baker's map}
Let $\mathbf M=(M_1, M_2)\in \mathbb N^2$ be a \emph{base} with $M_1>M_2\geq 2$, and let $\mathcal{A}\subset \mathbb Z_{\mathbf M}$ be a non-empty set, which we call the \emph{alphabet}. 
We consider the sequence of scales $\{\mathbf{N}=(N_1,N_2)\}_{k}$, where $N_j=M_j^k$ for $j=1,2$ and $k\in\mathbb N$, so that $N_j\to\infty$ as $k\to\infty$. 
The two scales $N_1$ and $N_2$ are related by
\[
    N_2 = N_1^\alpha, \qquad \alpha := \frac{\log M_2}{\log M_1} \in (0,1).
\]


Given a fixed cutoff function $\chi\in C_c^\infty((0,1)^2;[0,1])$, we define its discretization $\chi_{\mathbf{N}}$ as a function on ${\mathbb Z_{\mathbf N}}$ by
\[
    \chi_{\mathbf{N}}(\mathbf j)=
    \chi\left({\mathbf j}/{\mathbf N}\right),
    \quad \mathbf j\in {\mathbb Z_{\mathbf N}}.
\]
Then the quantum open baker's map $B_{\mathbf{N}}$ is an operator on ${\ell^2_{\mathbf N}}$ defined by
\begin{equation}\label{eq:BN}
        B_{\mathbf{N}}:=\sum_{\mathbf{a}\in \mathcal{A}} B_{\mathbf{N}}^{\mathbf{a}}, \qquad
        B_{\mathbf{N}}^{\mathbf{a}}:=\mathcal{F}_{\mathbf{N}}^*\, \Pi_{\mathbf{N},\mathbf{a}}^* \,
        \chi_{\mathbf{N}/\mathbf{M}} \,\mathcal{F}_{\mathbf{N}/\mathbf M} \, 
        \chi_{\mathbf{N}/\mathbf{M}}\, \Pi_{\mathbf{N},\mathbf{a}} ,
\end{equation}
where $\mathcal{F}_{\mathbf{N}}$ denotes the discrete Fourier transform \eqref{eq:DFT} on ${\mathbb Z_{\mathbf N}}$, and $\Pi_{\mathbf{N},\mathbf{a}}$ is the projection operator defined by
\[        \Pi_{\mathbf{N},\mathbf{a}} : {\ell^2_{\mathbf N}} \to 
        \ell^2_{\mathbf N/\mathbf M}, \qquad 
        \Pi_{\mathbf{N},\mathbf{a}}\,u(\mathbf j) =u\left( \mathbf j+\mathbf a\,\frac{\mathbf N}{\mathbf M}\right).
\]
This quantum map corresponds to the Fourier integral operator associated with the classical map, which is a symplectic relation on $(\mathbb T^2)^2$
\[\kappa:  (\mathbf x,\mathbf y)\in \left(\frac{a_1}{M_1},\frac{a_1+1}{M_1}\right) \times \left(\frac{a_2}{M_2},\frac{a_2+1}{M_2}\right) \times \mathbb T^2 ~\mapsto~ \left(\mathbf M \mathbf x-\mathbf a, \frac{\mathbf y+\mathbf a}{\mathbf M}\right),   \qquad  \mathbf a\in \mathcal A. \]
Since there are no interactions between the projection operators $\Pi_{\mathbf{N},\mathbf{a}}$ for distinct $\mathbf{a}$, we have the bound
\[
\|{B}_{\mathbf N}\|_{\ell^2\to \ell^2}\leq 1.
\]
Consequently, the \emph{spectrum} $\operatorname{Sp}(B_{\mathbf N})$ is contained in the unit disk.

Our first main result concerns the existence of a spectral gap for the anisotropic quantum open baker's map.

\begin{theorem}[Spectral gap]\label{thm:spectralgap}
    Suppose that the alphabet $\mathcal A$ has no full row or no full column. Then there exists some $\beta>0$ such that
    \begin{equation}\label{eq:spectralgap}
        \limsup_{N\to \infty} \max\{|\lambda|:\lambda\in \operatorname{Sp}(B_\mathbf{N})\} \le M_2^{-\beta},
    \end{equation}
    where $B_\mathbf{N}$ is the quantum open baker's map defined in \eqref{eq:BN}.
\end{theorem}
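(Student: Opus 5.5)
We follow the strategy of Dyatlov--Jin \cite{dyatlov2017resonances}: we reduce the spectral gap to a norm bound for a long iterate of $B_{\mathbf N}$, and then obtain that bound from an anisotropic discrete fractal uncertainty principle. Fix $\rho\in(0,1)$ close to $1$ and set $n\approx \rho k$, writing $n=n_1+n_0$ with $n_1\approx n_0\approx n/2$. It suffices to show
\[\|B_{\mathbf N}^{n}\|_{\ell^2\to\ell^2}\le C\, M_2^{-\beta n}\]
for some $\beta>0$ independent of $k$. Indeed, the spectral radius of $B_{\mathbf N}$ is at most $\|B_{\mathbf N}^n\|^{1/n}$, and $n\to\infty$ as $k\to\infty$. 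The gap stated in \eqref{eq:spectralgap} then follows, possibly after shrinking $\beta$.

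\textbf{Step 1 (structure of the iterates).} Since the components decouple in the tensor-product structure $\mathbb Z_{\mathbf N}=\mathbb Z_{N_1}\times\mathbb Z_{N_2}$, we expand $B_{\mathbf N}^{n}$ as a sum over words $\mathbf a_1\cdots\mathbf a_n\in\mathcal A^n$. Using the one-dimensional computations of \cite{dyatlov2017resonances} in each coordinate, up to an error $O(N^{-\infty})$ we write
\[B_{\mathbf N}^{n_1+n_0}\approx \widetilde B\,\mathbf 1_{X_{n_1}}\,\mathcal F_{\mathbf N}^*\,\mathbf 1_{Y_{n_0}}\,\widetilde B',\]
with $\|\widetilde B\|,\|\widetilde B'\|\le 1$. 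Here $X_{n_1}\subset\mathbb Z_{\mathbf N}$ consists of the points $\mathbf j$ whose first $n_1$ base-$\mathbf M$ digits $(j_1,j_2)$, read simultaneously in both coordinates, lie in $\mathcal A$. The set $Y_{n_0}$ is defined in the same way with the digits reversed. This is the step where the Egorov-type propagation is done, and the cutoffs $\chi$ contribute only $O(N^{-\infty})$ errors near the digit boundaries.

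The key point is that the two coordinates resolve at different scales. The first coordinate has $M_1^{n}$ digit cells, of size $N_1/M_1^{n}$, while the second has cells of size $N_2/M_2^{n}$. So $X_{n_1}$ and $Y_{n_0}$ are discretizations of Bedford--McMullen-type sets at the anisotropic scales $(M_1^{-n},M_2^{-n})$.

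\textbf{Step 2 (reduction to FUP).} We now need
\[\|\mathbf 1_{X}\mathcal F_{\mathbf N}^*\mathbf 1_{Y}\|\le C M_2^{-\beta n},\]
which is the anisotropic discrete FUP for Cartesian-product-like sets generated by an alphabet $\mathcal A\subset\mathbb Z_{\mathbf M}$. The hypothesis that $\mathcal A$ has no full row or no full column is exactly what is needed. It guarantees that, after collapsing the more finely resolved direction, the projected fractal is porous in at least one direction. Without it, the sets could contain full lines, for which the Fourier transform admits exact concentration and no FUP can hold.

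To prove the FUP, we follow Cohen's 2D argument \cite{Cohen}. Suppose $\mathcal A$ has no full row, i.e.\ for each $a_2$ some $a_1$ is missing. Then each horizontal fiber of $X$ is a 1D Cantor set at base $M_1$ that misses a digit at every level. Fixing the second Fourier variable reduces the problem to 1D FUP estimates along fibers, as in Dyatlov--Jin, with an exponent depending only on $M_1$ and $|\mathcal A|$. The case with no full column is handled symmetrically, using base $M_2$.

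\textbf{Main obstacle.} The hard part is the anisotropic FUP when the porosity is only in the direction whose scale does not match the other coordinate. Because $N_2=N_1^\alpha$, the nesting structure is uneven: one base-$M_1$ level in the first coordinate corresponds to only $\alpha$ of a level at the $M_2$-scale. A naive fiberwise argument then fails, because the fibers of $X$ depend on the other coordinate's digits in a way tied to the $Y$-scale.

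We would try the following: run a submultiplicativity/induction-on-scales argument, as in \cite{dyatlov2017resonances}, but group digits in blocks of $(M_1^{p},M_2^{q})$ with $p/q\approx\alpha$. This restores an approximately isotropic block structure, to which an iterated 1D FUP (Fourier transform in one variable, then Cauchy--Schwarz in the other) applies with a gain at each block. Multiplying these gains over the roughly $n$ blocks gives the factor $M_2^{-\beta n}$.
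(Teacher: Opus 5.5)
Your overall strategy of reducing to a discrete FUP by propagation matches the paper's, but the proposal never proves that FUP, and the FUP is the heart of the theorem.

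\textbf{The main gap: the FUP argument.} The sets $\mathcal X_k,\mathcal Y_k$ are not products, so the fibers do not decouple. If you Fourier transform in $y$ first, the function of $x$ at fixed $\eta$ is only known to be supported in the projection $\pi_x(\mathcal X_k)$. This projection can be all of $\mathbb Z_{N_1}$ even when $\mathcal A$ has no full row; an example is $\mathbf M=(3,2)$, $\mathcal A=\{(0,1),(1,0),(2,1)\}$ as in Figure~\ref{fig:aniscantor}. If you transform in $x$ first, apply the 1D FUP on the horizontal fibers $\mathcal X_k^y,\mathcal Y_k^\eta$, and then use Cauchy--Schwarz in the other variable, you only get $\|\mathds{1}_{\mathcal Y_k}\mathcal F_{\mathbf N}\mathds{1}_{\mathcal X_k}\|\le N_2^{1/2}\max_{y,\eta}\|\mathds{1}_{\mathcal Y_k^\eta}\mathcal F_{N_1}\mathds{1}_{\mathcal X_k^y}\|$. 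The loss $N_2^{1/2}=N_1^{\alpha/2}$ beats the 1D gain $N_1^{-\beta_1}$ unless $\beta_1>\alpha/2$, which you cannot guarantee.

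Regrouping digits into $(p,q)$-blocks with $p\neq q$ does not help. It breaks the pairing of the $j$-th $x$-digit with the $j$-th $y$-digit imposed by $\mathcal A$, so it is not a self-similarity of $\mathcal X_k$. Meanwhile, submultiplicativity with equal digit counts already holds exactly in the anisotropic setting (Appendix~\ref{app:submultiplicativity}), so the scale mismatch is not the real obstacle.

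The real crux is to show that, for some $k_0$, no nonzero $f$ has $\operatorname{supp} f\subset\mathcal X_{k_0}$ and $\operatorname{supp}\hat f\subset\mathcal Y_{k_0}$. This is a genuinely two-dimensional statement; it is false in the isotropic case when the carpets contain orthogonal lines. The paper proves it by combining:
\begin{itemize}
\item the anisotropic line support lemma (Lemma~\ref{lem:Cohen-Lemma11}), which rests on the Ruppert/Beukers--Smyth cyclotomic-point bound (Theorem~\ref{thm:cyclotomic}) with the $\alpha$-degree;
\item the fact that a carpet with $M_1>M_2$ contains no oblique segment;
\item Cohen's line-separation lemma;
\item a 1D uncertainty principle along horizontal or vertical lines.
\end{itemize}
Your plan has no substitute for this. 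In fact the fiber/strip route is exactly borderline here. In the continuous analogue with $\mathcal F_h^{\operatorname{ani}}$, almost-orthogonality of strips of widths $w_y,w_\eta$ needs $w_yw_\eta\gg h^\alpha$. Porosity of the projected sets at dual scales needs $(w_yw_\eta)^{1/\alpha}\ll h$, i.e.\ $w_yw_\eta\ll h^\alpha$, so the two requirements are incompatible. This is consistent with the paper's strip argument (Theorem~\ref{thm:cfup}) being available only for the isotropic $\mathcal F_h$, and the continuous FUP for $\mathcal F_h^{\operatorname{ani}}$ being left open.

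\textbf{A second error: the time scale in Step~1.} With $n_1\approx n_0\approx\rho k/2$, the sets $X_{n_1},Y_{n_0}$ prescribe only about $k/2$ digits. They are therefore unions of cells of size $N_1^{1-\rho/2}\times N_2^{1-\rho/2}\gg\sqrt{N_1}\times\sqrt{N_2}$. A discrete Gaussian wave packet of width $\sqrt{N_1}\times\sqrt{N_2}$, sitting in one cell in position and one cell in frequency, shows $\|\mathds{1}_{X}\mathcal F_{\mathbf N}^*\mathds{1}_{Y}\|\ge 1-o(1)$. So the bound you ask for in Step~2 is false at that time.

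You need localization to about $\rho k$ digits on each side, i.e.\ almost twice the Ehrenfest time. One way is to bound $\|B_{\mathbf N}^{2\tilde k}\|$ with $\tilde k=\lceil\rho k\rceil$; the paper instead applies $B_{\mathbf N}^{\tilde k}$ to an eigenfunction. You then cover $\mathcal X_\rho$ by $N_2^{O(1-\rho)}$ translates of $\mathcal X_k$ and apply the level-$k$ DFUP with $\mathcal B=\mathcal A$.

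\textbf{A smaller, repairable point.} $B_{\mathbf N}$ is not a tensor product of 1D maps unless $\mathcal A$ is a product set and $\chi$ factorizes. So the propagation of singularities cannot be borrowed coordinatewise from Dyatlov--Jin. It has to be done directly in 2D with the anisotropic distance $d_{\alpha,\mathbb T^2}$, as in Section~\ref{sec:spectralgap}.
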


The proof follows the basic strategies of \cite{dyatlov2017resonances} and \cite{Cohen}, and uses the following anisotropic discrete fractal uncertainty principle.

\subsection{Anisotropic  FUP}
With the base $\mathbf M$ and alphabet $\mathcal A\subset \mathbb Z_{\mathbf M}$ as above, we define the iterated \emph{discrete Cantor sets} $\mathcal X_k$ and their closed \emph{drawings} $\mathbf X_k$ as follows:
\begin{align}\label{eq:dcantorset}
    \mathcal{X}_k &= \bigl\{(a_1^{(0)}+\cdots+a_1^{(k-1)}M_1^{k-1},\; a_2^{(0)}+\cdots+a_2^{(k-1)}M_2^{k-1}) : (a_1^{(j)},a_2^{(j)})\in\mathcal{A}\bigr\} \subset \mathbb Z_{\mathbf N}, \nonumber\\[3pt]
    \mathbf{X}_{k} &= \overline{\bigl\{(x,y)\in \mathbb{T}^2 : (\lfloor M_1^k x \rfloor,\lfloor M_2^k y \rfloor)\in \mathcal{X}_k\bigr\}} \subset \mathbb{T}^2.
\end{align}
We write $\mathbf{X}=\bigcap_k \mathbf{X}_k\subset \mathbb T^2$ for the \emph{limiting Cantor set}.
Intuitively, we use the projection
 \[  \mathbf j=(j_1,j_2)\in \mathbb Z_{\mathbf N} \;\longmapsto\; \mathbf j/\mathbf N=(j_1/N_1,j_2/N_2)\in \mathbb{T}^2
\]
to relate the discrete setting $\mathbb Z_{\mathbf N}$ to the continuous setting $\mathbb{T}^2$. 
The limiting Cantor set $\mathbf{X}\subset \mathbb{R}^2$ is called a \textit{Bedford–McMullen carpet}. It is a self‑affine fractal generalization of the well‑known self‑similar Sierpiński carpet. 
See, for example, McMullen \cite{MR771063} for the Hausdorff dimension of Bedford–McMullen carpets. Figure~\ref{fig:aniscantor} shows an example of an anisotropic Cantor iterate model. 
\newcommand{\drawfractal}[1]{%
    \ifnum#1=0
        \fill[black!60] (0,0) rectangle (1,1);
    \else
        \pgfmathtruncatemacro{\newdepth}{#1-1}
        \begin{scope}[shift={(0.666667, 0.5)}, xscale=0.333333, yscale=0.5]
            \drawfractal{\newdepth}
        \end{scope}
        \begin{scope}[shift={(0.333333, 0)}, xscale=0.333333, yscale=0.5]
            \drawfractal{\newdepth}
        \end{scope}
        \begin{scope}[shift={(0, 0.5)}, xscale=0.333333, yscale=0.5]
            \drawfractal{\newdepth}
        \end{scope}
    \fi
}

\begin{figure}[htbp]
\centering
\setlength{\tabcolsep}{0.5cm}   
\renewcommand{\arraystretch}{5} 
\begin{tabular}{c c}
    \begin{tikzpicture}[scale=3.5]
        \drawfractal{1}
        \draw[very thin] (0,0) rectangle (1,1);
    \end{tikzpicture}
    &
    \begin{tikzpicture}[scale=3.5]
        \drawfractal{2}
        \draw[very thin] (0,0) rectangle (1,1);
    \end{tikzpicture}
    \\
    \begin{tikzpicture}[scale=3.5]
        \drawfractal{3}
        \draw[very thin] (0,0) rectangle (1,1);
    \end{tikzpicture}
    &
    \begin{tikzpicture}[scale=3.5]
        \drawfractal{4}
        \draw[very thin] (0,0) rectangle (1,1);
    \end{tikzpicture}
    \\[0.5ex] 
\end{tabular}
\caption{Anisotropic Cantor iterates, to a Bedford-Mcmullen carpet. Here we choose $M_1=3,M_2=2$ and $\mathcal{A}=\{(0,1),(1,0),(2,1)\}$.}
\label{fig:aniscantor}
\end{figure}

Let $\{\mathcal X_k\}, \{\mathcal Y_k\}\subset \mathbb Z_{\mathbf N}$ be two sequences of discrete Cantor sets, and let $\mathbf X, \mathbf Y \subset \mathbb T^2$ be their respective limiting Cantor sets, all constructed from a common base $\mathbf M$ and from two alphabets $\mathcal A$ and $\mathcal B$ (respectively) via \eqref{eq:dcantorset}.

The following theorem, our second main result, completely characterizes the conditions on the alphabets $\mathcal A$ and $\mathcal B$ under which the anisotropic discrete FUP holds.

\begin{theorem}[DFUP]\label{thm:DFUP}
Suppose the alphabets $\mathcal A,\mathcal B$ satisfy one of the following conditions:
\begin{itemize}
    \item $\mathcal A$ or $\mathcal B$ has no full row and no full column;
    \item both $\mathcal A$ and $\mathcal B$ have no full row, or both have no full column.
\end{itemize}
Then there exist $\beta>0$ and $C>0$, independent of $k$, such that
\begin{equation}\label{eq:dfup}
    \|\mathds{1}_{\mathcal{Y}_k}\mathcal{F}_{\mathbf N}\mathds{1}_{\mathcal{X}_k}\|_{{\ell^2_{\mathbf N}}\to {\ell^2_{\mathbf N}}}\leq C N_2^{-\beta},
\end{equation}
where $\mathcal{F}_{\mathbf N}:{\ell^2_{\mathbf N}}\to {\ell^2_{\mathbf N}}$ is the {anisotropic discrete Fourier transform}
\begin{equation}\label{eq:DFT}
    \mathcal{F}_{\mathbf N}u(\mathbf j)=\frac{1}{\sqrt{N_1N_2}} \sum_{\mathbf l\in \mathbb Z_{\mathbf N}} \exp\left(-2\pi i \,\frac{\mathbf j}{\mathbf N}\cdot \mathbf l\right)u(\mathbf l).
\end{equation}
Otherwise, for any $k\geq 0$,
\[
\|\mathds{1}_{\mathcal{Y}_k}\mathcal{F}_{\mathbf N}\mathds{1}_{\mathcal{X}_k}\|_{{\ell^2_{\mathbf N}}\to {\ell^2_{\mathbf N}}}=1.
\]
\end{theorem}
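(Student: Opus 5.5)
The plan splits into the rigidity (``otherwise'') part, proved by an explicit extremizer, and the decay part, proved by submultiplicativity together with a one-step strict bound. Throughout, a \emph{full row} of an alphabet means a set $\mathbb Z_{M_1}\times\{c\}$ contained in it, and a \emph{full column} means $\{d\}\times\mathbb Z_{M_2}$.

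\emph{Reduction of the ``otherwise'' case.} First check that the two listed conditions fail exactly when, up to swapping the roles, $\mathcal A$ has a full row and $\mathcal B$ has a full column.

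Suppose the conditions fail and $\mathcal A$ has a full row. If $\mathcal B$ has no full column, then $\mathcal B$ must have a full row (it cannot have neither). Then both have full rows, so they must not both lack full columns; since $\mathcal B$ has none, $\mathcal A$ has a full column. We are then in the configuration ``$\mathcal A$ full column, $\mathcal B$ full row''.

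In the configuration ``$\mathcal A$ full row $c$, $\mathcal B$ full column $d$'':
\begin{itemize}
\item $\mathcal X_k$ contains $\mathbb Z_{N_1}\times\{l_2^*\}$, where $l_2^*$ has all base-$M_2$ digits equal to $c$;
\item $\mathcal Y_k$ contains $\{j_1^*\}\times\mathbb Z_{N_2}$, where $j_1^*$ has all base-$M_1$ digits equal to $d$.
\end{itemize}
Take $u(\mathbf l)=N_1^{-1/2}e^{2\pi i j_1^*l_1/N_1}\delta_{l_2=l_2^*}$. Then $\mathcal F_{\mathbf N}u(\mathbf j)=N_2^{-1/2}\delta_{j_1=j_1^*}e^{-2\pi i j_2l_2^*/N_2}$. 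This has norm $1$ and is supported in $\mathcal Y_k$, so the operator norm equals $1$. The other configuration is symmetric, using the adjoint.

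\emph{Decay part: submultiplicativity.} Set $r_k:=\|\mathds 1_{\mathcal Y_k}\mathcal F_{\mathbf N}\mathds 1_{\mathcal X_k}\|$. I would first prove $r_{k+k'}\le r_kr_{k'}$, following Dyatlov--Jin \cite{dyatlov2017resonances} and Cohen \cite{Cohen}. Since both coordinates are refined by the same number of digits, $\mathbb Z_{\mathbf M^{k+k'}}\cong\mathbb Z_{\mathbf M^{k}}\times\mathbb Z_{\mathbf M^{k'}}$ coordinatewise, with $\mathcal X_{k+k'}\cong\mathcal X_k\times\mathcal X_{k'}$ (low and high digits). The Cooley--Tukey factorization writes $\mathcal F_{\mathbf N}$ as a composition of partial DFTs in the two digit blocks, separated by a diagonal twist. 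Each partial DFT, restricted as in the statement, is a direct sum of copies of the level-$k$ or level-$k'$ operator, and the twist is unitary and diagonal, so it commutes with the indicators. The anisotropy causes no trouble here, because the twist $e^{-2\pi i\,\mathbf j\cdot\mathbf l/\mathbf N}$ factorizes over the two coordinates.

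Given submultiplicativity, it suffices to find one $k_0$ with $r_{k_0}<1$. Then
\[
r_k\le C r_{k_0}^{k/k_0}=C N_2^{-\beta},\qquad \beta=-\frac{\log r_{k_0}}{k_0\log M_2}>0.
\]

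\emph{Strict bound $r_{k_0}<1$ (main obstacle).} In finite dimensions, $r_k=1$ iff there is a nonzero $u$ supported in $\mathcal X_k$ with $\mathcal F_{\mathbf N}u$ supported in $\mathcal Y_k$, so everything reduces to ruling this out.

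For $k=1$, the two-variable DFT is $\mathcal F_{M_1}\otimes\mathcal F_{M_2}$. First take the case where both alphabets have no full column. I would apply the partial transform $\mathcal F_{M_1}$ in the first variable and then analyze the second variable fiberwise. Each column slice $\mathcal A_{a_1}=\{a_2:(a_1,a_2)\in\mathcal A\}$ is a proper subset of $\mathbb Z_{M_2}$, and likewise for $\mathcal B$.

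I would try to derive a contradiction from a nonvanishing or Chebotarev-type argument in the second variable. The function $\hat u(j_1,\cdot)$ is a combination, with phases $e^{-2\pi i j_1a_1/M_1}$, of functions supported on the slices $\mathcal A_{a_1}$. One of the missing second digits, together with a suitable choice of $j_1\notin$ the first projection of $\mathcal B$, should then force vanishing. The case ``$\mathcal A$ has no full row and no full column'' is handled by picking the appropriate direction.

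If a one-step argument fails (support sets can be large), I would pass to $k_0=2$ or use the adjoint to exchange $\mathcal A$ and $\mathcal B$. The fallback is a fiberwise 1D FUP: bound $\|\mathds 1_{\mathcal Y}\mathcal F\mathds 1_{\mathcal X}\|$ by the supremum over fibers of 1D discrete operators $\mathds 1_{B'}\mathcal F_{M_2^{k}}\mathds 1_{A'}$ with proper alphabets. These have norm $<1$ by the Dyatlov--Jin uncertainty argument, using uniformity over the finitely many slice alphabets that can occur.
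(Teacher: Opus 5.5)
\paragraph{Gap in the decay part.} Your ``otherwise'' extremizer is correct; it spells out what the paper leaves implicit. Your submultiplicativity reduction is the same as the paper's Appendix~\ref{app:submultiplicativity}. The decay part, however, has a genuine gap exactly where you flag it: you never prove $r_{k_0}<1$ for any $k_0$, and the one-step strategy is false as stated.

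Take $\mathbf M=(4,2)$, $\mathcal A=\{(0,0),(1,1),(2,0),(3,1)\}$ and $\mathcal B=\{(0,0),(2,1)\}$. Neither alphabet has a full row or a full column, so the theorem asserts decay. Yet $u=\mathds{1}_{\mathcal A}$ satisfies $\mathcal F_{\mathbf M}u(\mathbf j)=8^{-1/2}\sum_{t=0}^{3}e^{-2\pi i t(j_1+2j_2)/4}$, which is supported exactly on $\mathcal B$, so $r_1=1$.

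The extremizer lives on an oblique discrete line (a subgroup of $\mathbb Z_{\mathbf M}$). Only the Cantor structure across many scales destroys it, so no argument built on missing digits in individual row or column slices can detect it. Your fiberwise fallback fails for the same reason. The sets $\mathcal X_k,\mathcal Y_k$ are not products, and after applying $\mathcal F_{N_1}\otimes\mathrm{id}$ the function at fixed $j_1$ mixes all the fibers $u(l_1,\cdot)$. So it is only known to be supported in the projection of $\mathcal X_k$ to the second coordinate. That projection is all of $\mathbb Z_{N_2}$ whenever every row of $\mathcal A$ is nonempty, and the resulting 1D operator $\mathds{1}_{(\mathcal Y_k)_{j_1}}\mathcal F_{N_2}$ has norm $1$.

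\paragraph{The missing idea.} The paper argues by contradiction. Suppose that for every $k$ there is a nonzero $f$ with $S=\operatorname{supp}f\subset\mathcal X_k$ and $\operatorname{supp}\hat f\subset\mathcal Y_k$. Apply the line support lemma (Lemma~\ref{lem:Cohen-Lemma11}), whose depth comes from the Ruppert/Beukers--Smyth bound on cyclotomic points (Theorem~\ref{thm:cyclotomic}). It produces an irreducible line $\ell$ with $\|\ell\|_\alpha\le R\lesssim |S|^{\alpha/(1+\alpha)}$, and a nonzero $g$ supported on $S\cap\ell$ with $\operatorname{supp}\hat g\subset \mathrm{N}^\alpha_R(Y_{\mathbf N})$. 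Since $|\hat g|$ is constant along the dual lines, there are two cases.
\begin{enumerate}
\item Suppose no translate of the dual direction $\mathbf v$ lies in $\mathbf Y$. This is automatic for oblique $\mathbf v$ by Proposition~\ref{prop:No oblique line lies entirely on the anisotropic Cantor set}, which is where anisotropy is used. Then Lemma~\ref{lem:Lemma 4 in Cohen2025} forces $R\gtrsim N_2$, hence $|S|\gtrsim N_1N_2$, contradicting $|X_{\mathbf N}|=o(N_1N_2)$.
\item Suppose instead $\mathbf Y$ contains a full vertical (resp.\ horizontal) line. Then $\mathcal B$ has a full column (resp.\ row), and the hypothesis forces $\mathcal A$ to have no full row (resp.\ column). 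A 1D uncertainty principle on the axis-parallel line $\ell$ then gives $|\operatorname{supp}\hat g|\gtrsim N_1N_2$, again a contradiction.
\end{enumerate}
This case split is the only place the hypothesis on the alphabets enters the decay estimate. Your proposal never identifies a mechanism of this kind.
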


\begin{remark} In an alphabet $\mathcal A$, a \emph{row} corresponds to fixing the second coordinate, and a \emph{column} corresponds to fixing the first coordinate. 
The condition on alphabets in Theorem \ref{thm:DFUP} excludes the situation where one alphabet contains a full row while the other contains a full column. This is an anisotropic strengthening of Cohen's isotropic condition \cite{Cohen}, which requires that there be no two orthogonal lines, one in each of the two limiting Cantor sets.
\end{remark}

\begin{remark}
Typically, the discrete FUP is weaker than its continuous counterpart (see Theorem~\ref{thm:C-D}). We are unable to prove this continuous counterpart, which involves an anisotropic semiclassical Fourier transform. However, as another main theorem, we obtain an anisotropic continuous FUP for the (isotropic) semiclassical Fourier transform (see Theorem~\ref{thm:cfup}), following the idea of \cite{dyatlov2024semiclassical}. Here we present a version for Bedford--McMullen carpets.
\end{remark}

\begin{theorem}[CFUP for BM carpet]\label{thm:cfup-bm}
Suppose that both $\mathcal A$ and $\mathcal B$ have no full row. 
Then there exist $\beta>0$ and $C>0$, independent of $h\in(0,1)$,  such that
\begin{equation}\label{eq:cfup-BD}
\|\mathds{1}_{\mathbf X+[-h,h]\times[-h^\alpha,h^\alpha]} \mathcal{F}_h \mathds{1}_{\mathbf Y+[-h,h]\times[-h^\alpha,h^\alpha]}\|_{L^2 \to L^2} \leq C h^\beta,
\end{equation}
where  $\mathcal{F}_h$ is the (isotropic) semiclassical Fourier transform
\begin{equation}\label{eq:semi-F}
\mathcal{F}_h f(\xi, \eta) = h^{-1} \int_{\mathbb{R}^2} e^{-2\pi i (x\xi/h + y\eta/h)} f(x, y) \, dx \, dy.
\end{equation}
\end{theorem}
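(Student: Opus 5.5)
The plan is to deduce the estimate from the general anisotropic continuous FUP, Theorem~\ref{thm:cfup}, announced in the preceding remark, which concerns sets satisfying an anisotropic (line) porosity condition. So the work reduces to showing that the two thickened carpets $\mathbf X+[-h,h]\times[-h^\alpha,h^\alpha]$ and $\mathbf Y+[-h,h]\times[-h^\alpha,h^\alpha]$ satisfy that porosity condition on scales from $h$ (horizontal) and $h^\alpha$ (vertical) up to $1$, with constants independent of $h$. The condition is that every anisotropic box $[x,x+r]\times[y,y+r^\alpha]$ meets a strip parallel to the horizontal axis of relative width $\nu$ missing the set.

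The porosity check uses self-affinity. Given $r\in[h,1]$, pick $k$ with $M_1^{-k-1}<r\le M_1^{-k}$, so $r^\alpha\sim M_2^{-k}$. A box of size $r\times r^\alpha$ is then comparable to a level-$k$ rectangle of size $M_1^{-k}\times M_2^{-k}$, up to a bounded number of neighbouring rectangles. Inside any level-$k$ rectangle of $\mathbf X_k$, the next generation is an affine copy of the pattern $\mathcal A$. Because $\mathcal A$ has no full row, each row index $a_2$ has some missing column. The property I actually need is stronger: a \emph{horizontal} strip of height $\sim M_2^{-k-1}$ (or finer) that avoids $\mathbf X$ across the whole width $M_1^{-k}$.

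Obtaining that strip is the main obstacle, and I would not try to manufacture it from the missing horizontal segments. A missing column in a row yields only a short horizontal gap, not a full-width one. It is also not what the kernel needs. For $\mathcal F_h$ the relevant phase $x\xi+y\eta$ requires porosity of one set in the directions dual to the other's lines. So "no full row" should be encoded through \emph{line porosity along horizontal lines}: every horizontal line segment of length $r$ contains an interval of length $\nu r$ avoiding the set.

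That property does follow directly. Fix a horizontal line $y=y_0$. It lies in some level-$(k+1)$ row of $\mathbf X$, corresponding to a digit $a_2$. Within each level-$k$ rectangle it crosses, the row $a_2$ of $\mathcal A$ omits some $a_1$. This gives a gap of length $M_1^{-k-1}\gtrsim r/M_1$. The gap persists after thickening by $h$ horizontally, provided $h\ll r$. It also persists after thickening by $h^\alpha$ vertically, because the row index is stable: at scale $h^\alpha\le$ the row height, the thickened line may hit at most two adjacent rows. To handle that I would shrink to the next generation, or use the fact that two adjacent rows each omit some position. The leftover issue, that the two rows may omit different positions, is handled by descending one more level, where each subrectangle again omits a position. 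A short pigeonhole argument of this kind should give a uniform $\nu=\nu(\mathbf M)$.

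With the anisotropic line-porosity of both thickened sets verified at all scales in $[h,1]$, Theorem~\ref{thm:cfup} applies and gives the bound $Ch^\beta$ in \eqref{eq:cfup-BD}. The step I expect to be delicate is matching the exact form of porosity required by Theorem~\ref{thm:cfup}: which direction, which scale range, and whether it is box or line porosity. The porosity along horizontal lines sketched above would then have to be restated in that form.
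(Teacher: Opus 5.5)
Your top-level route is the paper's: Theorem~\ref{thm:cfup-bm} follows from Theorem~\ref{thm:cfup} once $\mathbf X,\mathbf Y$ and their $h\times h^\alpha$ thickenings are shown to be $\alpha$-anisotropic porous along $(e_1,e_2)$. The orthogonality requirement $v_1'\perp v_2$, $v_2'\perp v_1$ then holds automatically.

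\textbf{The gap.} The porosity you propose to verify is not the hypothesis of Theorem~\ref{thm:cfup}, and the step you defer (``would have to be restated in that form'') is the whole point. Definition~\ref{def:porous} asks that every horizontal segment $I$ of length $l$ contain a point $x$ such that the entire rectangle $\operatorname{PG}_{\gamma,\alpha,l}(x,e_1,e_2)$ misses the set. That rectangle is centred at $x$ and has width $\gamma l$ and height $\gamma l^\alpha$. Your property says that every horizontal segment of length $r$ has a subinterval of length $\nu r$ disjoint from $\mathbf X+[-h,h]\times[-h^\alpha,h^\alpha]$. This only says that $\mathbf X$ misses a rectangle of size about $\nu r\times 2h^\alpha$: the hole's height stays at $h^\alpha$ instead of growing like $r^\alpha$.

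As a condition this is strictly weaker, and it does not feed the mechanism of Theorem~\ref{thm:cfup}. There the set is cut into horizontal strips of height $h^\rho$, with $\rho=(\frac12+\epsilon)\alpha$. One then needs the $x$-projection of each strip to be porous from scale $\sim h^{1/2+\epsilon}$ up to $1$. This is exactly where holes of height $\gamma L^\alpha/20\ge h^\rho$ at horizontal scale $L$ are used. Gaps of height $h^\alpha\ll h^\rho$ lying at different $x$-positions on neighbouring lines give no gap in the projection. So your remark that vertical extent ``is not what the kernel needs'' is backwards.

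\textbf{The fix.} The missing ingredient is already in your hands. You are right that a full-width empty strip, which would need an empty row, is unavailable. It is not needed. The position $(a_1,a_2)\notin\mathcal A$ omitted in the row crossed by the line removes a whole level-$(k+1)$ sub-rectangle of size $M_1^{-k-1}\times M_2^{-k-1}\sim r\times r^\alpha$. That is precisely an anisotropic hole of the shape required in Definition~\ref{def:porous}.

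Your ``descend one more level'' idea handles the centring problem, provided you run it on rectangles rather than on line gaps. The problem arises when the line runs close to the boundary between two rows, so that a centred rectangle would stick into the adjacent row. Take the column $c'$ omitted by one of the two rows. In the other row, the sub-rectangle in column $c'$ (if present) has a boundary sub-row that itself omits a position one level down. Together these give a hole of size $\gtrsim_{\mathbf M} r\times r^\alpha$ centred on the line. This is the role of the three stacked cubes in Example~\ref{ex:BMcarpet-porous}.

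This yields $\gamma(\mathbf M)$-porosity of $\mathbf X$ and $\mathbf Y$ on all scales in $(0,1]$. The thickening is then immediate from Proposition~\ref{prop:porous small scale}(a): shrink each hole by $h\times h^\alpha$, which is valid for $l\ge(5/\gamma)^{1/\alpha}h$. Your separate two-adjacent-rows analysis at height $h^\alpha$ becomes unnecessary. Theorem~\ref{thm:cfup} then applies; its proof only uses porosity from scale $\sim h^{1/2+\epsilon}$ upward, so the constant-factor loss in the lower scale is harmless.
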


As a corollary of Theorem \ref{thm:cfup-dual} and Corollary \ref{cor:cfup-ani}, we also obtain the following reflected version of the anisotropic continuous FUP for the anisotropic semiclassical Fourier transform.
\begin{theorem}[CFUP involving $\mathcal{F}_h^{\operatorname{ani}}$]\label{thm:cfup-bm-ani}
Suppose that $\mathcal A$ has no full row and $\mathcal B$ has no full column. Then there exist $\beta>0$ and $C>0$, independent of $h\in(0,1)$, such that
\[
\|\mathds{1}_{\mathbf{Y}+[-h,h]\times[-h^\alpha,h^\alpha]}\mathcal{F}_h^{\operatorname{ani}} \mathds{1}_{\operatorname{Ref}(\mathbf{X}+[-h,h]\times[-h^\alpha,h^\alpha])}\| \leq C h^\beta,
\]
  where $\operatorname{Ref}$ denotes the reflection $(x,y) \in\mathbb{R}^2 \mapsto (y,x)\in \mathbb{R}^2$, and $\mathcal{F}_h^{\operatorname{ani}}$ is the anisotropic semiclassical Fourier transform
\begin{equation}\label{eq:anis-semi-F}
 \mathcal{F}_h^{\operatorname{ani}}f(\xi,\eta) = 
    \frac{1}{h^{(1+\alpha)/2}} \int_{\mathbb{R}^2} e^{-2\pi i(x\xi/h + y\eta/h^{\alpha})} f(x,y) \, dx \, dy.
\end{equation}
\end{theorem}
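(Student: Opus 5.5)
The idea is to reduce Theorem~\ref{thm:cfup-bm-ani} to Theorem~\ref{thm:cfup-bm}, or more precisely to its general form Theorem~\ref{thm:cfup-dual} and Corollary~\ref{cor:cfup-ani}, by a change of variables. The change of variables should turn the anisotropic Fourier transform $\mathcal F_h^{\operatorname{ani}}$ into an isotropic one and move the reflection onto the sets.

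\emph{Step 1: conjugation identity.} Let $D_\alpha f(x,y)=h^{(\alpha-1)/2}f(x,h^{\alpha-1}y)$, which is unitary on $L^2(\mathbb R^2)$. Substituting $y=h^{1-\alpha}y'$ in \eqref{eq:anis-semi-F} shows that $\mathcal F_h^{\operatorname{ani}}$ equals $\mathcal F_h$ composed with such dilations. The phase $y\eta/h^\alpha$ becomes $y'\eta/h$. After taking adjoints, $\mathcal F_h^{\operatorname{ani}}$ can be written either as $\mathcal F_h\circ(\text{dilation})$ or as $(\text{dilation})\circ\mathcal F_h$. Composing with the reflection, which commutes with Fourier transforms up to reflecting the output, we get
\[
\|\mathds 1_{E}\,\mathcal F_h^{\operatorname{ani}}\,\mathds 1_{\operatorname{Ref} F}\| = \|\mathds 1_{E'}\,\mathcal F_h\,\mathds 1_{F'}\|
\]
for suitably rescaled and reflected sets $E'$ and $F'$. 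The key point is that a neighborhood of size $h\times h^\alpha$ of $\mathbf Y$ is the natural anisotropic scale. The dual statement Theorem~\ref{thm:cfup-dual} is presumably formulated exactly to absorb this: one set is anisotropically porous at scales $h$ and $h^\alpha$, and the other is porous in the reflected sense.

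\emph{Step 2: porosity verification.} Next we check that the hypotheses of Corollary~\ref{cor:cfup-ani} hold.
\begin{itemize}
\item Since $\mathcal A$ has no full row, the carpet $\mathbf X$ is anisotropically line porous in the sense used for Theorem~\ref{thm:cfup-bm}, from scale $h$ up to $1$. At each level, every $M_1^{-k}\times M_2^{-k}$ rectangle misses a horizontal strip of relative size $\gtrsim 1$, because some row of the alphabet is incomplete.
\item Since $\mathcal B$ has no full column, $\mathbf Y$ misses vertical strips at every level. After reflection $\operatorname{Ref}$ these become horizontal strips.
\end{itemize}
So, after the reflection and rescaling of Step 1, both sets satisfy the no-full-row type porosity. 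The $h\times h^\alpha$ thickening has to be tracked through these maps. Reflecting $[-h,h]\times[-h^\alpha,h^\alpha]$ gives $[-h^\alpha,h^\alpha]\times[-h,h]$, and the dilation must match it to the neighborhood required by the corollary.

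\emph{Main obstacle.} The delicate part is the bookkeeping of scales. After dilation the set $F'$ lives in a region of size about $1\times h^{\alpha-1}$, and its porosity holds only on a range of scales. We must make sure this range matches what Corollary~\ref{cor:cfup-ani} needs, giving a power gain $h^\beta$ with $\beta$ depending on $\alpha$ and the porosity constants.

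If the corollary requires bounded sets, first localize $\mathbf X$ and $\mathbf Y$ to unit cells. Then sum over the $O(h^{\alpha-1})$ translated pieces using almost orthogonality, i.e.\ the Cotlar--Stein lemma or the fact that a Fourier multiplier commutes with translations, so that the bound stays uniform. Once the scales are matched, the conclusion $\le Ch^\beta$ follows directly from Corollary~\ref{cor:cfup-ani}.
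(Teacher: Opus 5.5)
There is a genuine gap. The change of variables in Step 1 lands on a problem that none of the paper's results covers, and the porosity claim in Step 2 that is supposed to close that gap is false.

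Writing $\mathcal F_h^{\operatorname{ani}}=\mathcal F_h D_\alpha$ does not remove the anisotropy; it only pushes it into the sets. Incidentally, $D_\alpha\mathds 1_G=\mathds 1_{G'}D_\alpha$ squeezes $G$ into a strip of height $h^{1-\alpha}$, not $h^{\alpha-1}$. The input set's porosity therefore lives on scales $[h^{2-\alpha},h^{1-\alpha}]$ rather than $[h,1]$.

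More importantly, reflections and diagonal dilations preserve the \emph{exponent} of anisotropic porosity and only change the constant by an $h$-dependent factor. Since $\mathcal B$ has no full column, $\mathbf Y$ is porous along vertical segments of length $l$ with holes of size $\gamma l\times\gamma l^{1/\alpha}$. Note that no full row/column gives a hole met by every segment, not a missing strip. After any reflection or dilation, this set is still of $1/\alpha$-type, with holes much thinner than $l^\alpha$. It is never of the no-full-row $\alpha$-type with holes $\gamma l\times\gamma l^{\alpha}$ uniformly on $[h,1]$, which is what Theorem~\ref{thm:cfup-bm} and Theorem~\ref{thm:cfup} require. So ``both sets satisfy the no-full-row type porosity'' fails, and $\|\mathds 1_{E'}\mathcal F_h\mathds 1_{F'}\|$ is not an instance of either theorem.

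Two further problems. In the statement it is $\mathbf X$ that is reflected, not $\mathbf Y$, and $\operatorname{Ref}$ does not commute with $\mathcal F_h^{\operatorname{ani}}$: it swaps the roles of $h$ and $h^\alpha$. Also, Corollary~\ref{cor:cfup-ani} is itself a statement about $\mathcal F_h^{\operatorname{ani}}$; it is essentially the present theorem for a suitable orthogonal pair. Invoking it after passing to $\mathcal F_h$ is a mismatch. The ``bookkeeping'' you postpone is really the whole proof.

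The missing idea is to rescale only the frequency side, and to do so anisotropically. Since $\mathcal F_h^{\operatorname{ani}}f(\xi,\eta)=h^{-(1+\alpha)/2}\hat f(\xi/h,\eta/h^\alpha)$, the norm equals
\[
\bigl\|\mathds 1_{\widetilde{\mathbf Y}+[-1,1]^2}\,\mathcal F\,\mathds 1_{\operatorname{Ref}(\mathbf X)+[-h^\alpha,h^\alpha]\times[-h,h]}\bigr\|,\qquad \widetilde{\mathbf Y}=\{(x/h,y/h^\alpha):(x,y)\in\mathbf Y\},
\]
where $\mathcal F$ is the standard Fourier transform. Then:
\begin{itemize}
\item By Example~\ref{ex:rescaled BM-carpet 1/alpha porous} (no full column in $\mathcal B$), $\widetilde{\mathbf Y}$ is $1/\alpha$-anisotropic porous on macroscopic scales along $(e_2,e_1)$.
\item By Example~\ref{ex:BMcarpet-porous} (no full row in $\mathcal A$), the reflection, and Proposition~\ref{prop:porous small scale}(a), the input set is $\alpha$-anisotropic porous on scales $[Ch,1]$ along $(e_2,e_1)$.
\item The reflection is exactly what makes the orthogonality conditions $v_1'\perp v_2$, $v_2'\perp v_1$ hold, since here $e_2\perp e_1$. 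Without $\operatorname{Ref}$ the input set would be porous along $(e_1,e_2)$, and $v_1'=e_2$ would not be orthogonal to $v_2=e_2$.
\end{itemize}
Theorem~\ref{thm:cfup-dual}, which is built for exactly this mixed pair (microscopic $\alpha$-porous and macroscopic $1/\alpha$-porous), then gives $Ch^\beta$. No localization to unit cells or extra Cotlar--Stein summation is needed.
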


\subsection{Background and related works}

Earlier studies of essential spectral gap for open quantum chaotic systems usually assume the pressure condition,
which means the trapped set is ``small" in some sense, see e.g. Nonnenmacher-Zworski \cite{nonnenmacher2009quantum}. The first breakthrough was given by Dyatlov–Zahl \cite{dyatlov2016spectral} who first proved positive essential spectral gap for some specific convex cocompact hyperbolic surfaces for which the pressure conditions fails. They reduced the spectral gap problem for convex cocompact hyperbolic manifolds to a \emph{fractal uncertainty principle} (FUP):  a nonzero function cannot be supported on one fractal set and have its Fourier transform supported on another fractal set.  A subsequent milestone paper by Bourgain–Dyatlov \cite{bourgain2018spectral} proved the 
first and now-famous continuous FUP in one dimension, which was used to obtain the spectral gap for all convex cocompact hyperbolic surfaces. This continuous FUP has found another striking application in closed quantum chaotic systems, see, e.g., \cite{MR3849286}, \cite{MR4374954} and \cite{athreya2025semiclassical}.

The high-dimensional theory of FUP is more challenging. Han–Schlag \cite{MR4085124} 
proved a continuous FUP where the physical set is $\delta$-regular and the Fourier support is a special 
Cartesian product of regular sets, using damping functions. Cladek–Tao \cite{CladekTao} proved an FUP for 
regular sets in odd dimensions with $\delta$ near $d/2$, using additive energy estimates. 
Backus–Leng–Tao \cite{MR4930594} proved a high-dimensional FUP via Dolgopyat’s method for regular sets with $\delta < d/2$, 
avoiding certain orthogonal cases; this extends the one-dimensional result of \cite{dyatlov2018dolgopyat}. 
Recently, Cohen \cite{MR4927737} established a general FUP in high dimensions for sets exhibiting porosity along lines, 
using a quantitative higher-dimensional Beurling–Malliavin theorem. 

Open quantum maps are discrete-time models of open quantum chaotic systems, which are attractive due to their simplicity in both theoretical and numerical aspects (see Nonnenmacher's review paper \cite{nonnenmacher2011spectral} for mathematical results in open quantum chaos, 
and Novaes \cite{novaes2013resonances} for the physics literature on open quantum maps). 
Nonnenmacher--Zworski \cite{nonnenmacher2007distribution} first studied the essential spectral gap problem for Walsh-quantized open baker's maps. 
Dyatlov--Jin \cite{dyatlov2017resonances} gave the first systematic theoretical analysis of Fourier-quantized open baker's map using the FUP. In particular, they established the discrete FUP in 1D and proved a positive quantitative essential spectral gap for all parameters. For further studies of 1D discrete FUP, see \cite{MR4779150}, \cite{han2024fractal}, \cite{lai2025fractal}, 
and \cite{kangabire2024bounds}. Dyatlov--Jin \cite{dyatlov2017resonances} also established the fractal Weyl upper bound for the number of resonances, 
which was improved further by Li \cite{li2023weyl} and Cunningham \cite{cunningham2026improved}. 
Recently, Cohen \cite{Cohen} studied the open quantum baker's map in higher dimensions. In particular, he established that the isotropic FUP for two-dimensional discrete Cantor sets holds if and only if there is no pair of orthogonal lines in the limiting Cantor sets.

\subsection{Structure of the paper}

Section~\ref{sec:spectralgap} is devoted to the proof of Theorem~\ref{thm:spectralgap}, establishing the existence of a spectral gap $\beta>0$ for the anisotropic quantum open baker's map. 
We use propagation of singularities to reduce the spectral gap problem to an anisotropic discrete FUP, which is the main Theorem~\ref{thm:DFUP}. 
Section~\ref{sec:dfup} is devoted to the anisotropic discrete FUP under the condition that the two alphabets do not simultaneously have a full row in one alphabet and a full column in the other. 
A key role is played by an anisotropic version of a line support lemma derived from a number-theoretic result counting cyclotomic points on algebraic curves, due to Ruppert and Beukers–Smyth. 
We prove Theorem~\ref{thm:DFUP} by combining the line support lemma with the key observation that, due to its anisotropy, the limiting Cantor set $\mathbf X$ either contains no line (implying a separation from lines) or contains only horizontal or vertical lines.

In the final Section~\ref{sec:cfup}, we focus on the anisotropic continuous FUPs. 
The first subsection~\ref{subsec:implication} introduces a continuous FUP where both the fractal sets and the Fourier transform are anisotropic; if such FUP holds, it implies Theorem~\ref{thm:DFUP} (see Theorem~\ref{thm:C-D}). 
The key tool for this implication is that the discrete FUP can be controlled by the standard Fourier transform on $\mathbb R^2$ for translated separated sets, with arbitrarily small errors. It is unfortunate that 
we can not prove such continuous FUP, and we do not 
even know whether this FUP is true in general.
Section \ref{subsec:cfup-isoFourierTransform} provides the proof of a continuous FUP for anisotropic fractal sets but using the (isotropic) semiclassical Fourier transform. 
We introduce a novel anisotropic porosity (Definition~\ref{def:porous}), use the Weyl quantization and a metaplectic operator to perform normalization, and apply symbol calculus associated to a coisotropic space to prove the almost-orthogonality of piecewise cutoff operators. 
The theorem is then proved by combining these ingredients with the porosity of the projection set and a one-dimensional FUP. 
This follows the work of \cite{dyatlov2024semiclassical}, which considers semiclassical measures for quantum cat maps in high dimensions. 
In Section \ref{subsec:rescaled}, we prove a rescaled FUP which implies a reflected FUP involving anisotropic semiclassical Fourier transform and Bedford-McMullen carpets. 

\section{Reduction principle for spectral gap}\label{sec:spectralgap}
We aim to prove Theorem~\ref{thm:spectralgap}, which establishes the spectral gap for anisotropic quantum open baker's maps. In addition to the discrete FUP (to be proved in Section~\ref{sec:dfup}), we require the propagation of singularities for the quantum maps. We first introduce some notation.
\begin{definition} 
For two points $\mathbf x=(x_1, x_2),\, \mathbf x'=(x_1', x_2')\in \mathbb T^2$ and two sets $V,W\subset \mathbb T^2$,  we introduce the \emph{$\alpha$-distance on $\mathbb{T}^2$}: 
\[
  d_{\alpha,\mathbb{T}^2}(\mathbf x,\mathbf x') := \max\left(d_{\mathbb{T}}(x_1,x_1')^{\alpha}, d_{\mathbb{T}}(x_2,x_2')\right),  \qquad d_{\alpha,\mathbb{T}^2}(V,W):=\inf_{x\in V,\, y\in W} d_{\alpha,\mathbb{T}^2}(x,y),
\] 
where $d_{\mathbb{T}}$ is the metric on $\mathbb T$ defined by
\[
d_{\mathbb{T}}(x,x'):=\min_{a\in \mathbb{Z}} |x-x'-a|, \quad x,x'\in \mathbb T.
\]
\end{definition}

Note that $d_{\alpha,\mathbb{T}^2}$ is a metric on $\mathbb{T}^2$ and induces the same topology as the standard metric on $\mathbb{T}^2$. 
One can easily verify the following basic property.
\begin{lemma}\label{lem:distance_lowerbound}
For every pair of points\, $\mathbf x=(x_1, x_2),\, \mathbf x'=(x_1', x_2')\in \mathbb T^2$, we have
\begin{equation}\label{eq:dis-basic}
    d_{\alpha,\mathbb{T}^2}(\mathbf x,\mathbf x') = d_{\alpha,\mathbb{T}^2}(\mathbf x-\mathbf x', 0) \geq \min\left(\max(|x_1-x_1'|^{\alpha},|x_2-x_2'|),\, d_{\alpha,\mathbb{T}^2}(\mathbf x', 0_L)\right),
\end{equation}
where $0_L:=\{0\}\times [0,1]\cup [0,1]\times \{0\}$ is the lower boundary of $\mathbb T^2$. 
\end{lemma}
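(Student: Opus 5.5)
The plan is a direct coordinatewise comparison. We represent points of $\mathbb T^2$ by coordinates in $[0,1)$, so that $|x_i-x_i'|$ is the Euclidean difference of the representatives.

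\emph{Translation invariance.} The equality $d_{\alpha,\mathbb T^2}(\mathbf x,\mathbf x')=d_{\alpha,\mathbb T^2}(\mathbf x-\mathbf x',0)$ is immediate. Indeed, $d_{\mathbb T}(x_i,x_i')=\min_{a}|x_i-x_i'-a|=d_{\mathbb T}(x_i-x_i',0)$ in each coordinate, and $d_{\alpha,\mathbb T^2}$ is built coordinatewise from $d_{\mathbb T}$.

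\emph{Distance to $0_L$.} Next I would compute $d_{\alpha,\mathbb T^2}(\mathbf x',0_L)$ explicitly. The set $0_L$ is the union of the circles $\{0\}\times\mathbb T$ and $\mathbb T\times\{0\}$. On $\{0\}\times\mathbb T$ we may take the second coordinate equal to $x_2'$, and on $\mathbb T\times\{0\}$ we may take the first coordinate equal to $x_1'$. Hence
\[
d_{\alpha,\mathbb T^2}(\mathbf x',0_L)=\min\bigl(d_{\mathbb T}(x_1',0)^{\alpha},\,d_{\mathbb T}(x_2',0)\bigr).
\]

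\emph{One-dimensional dichotomy.} The key observation is the following. For $x_i,x_i'\in[0,1)$, either $d_{\mathbb T}(x_i,x_i')=|x_i-x_i'|$, or (the ``wrap-around'' case) $d_{\mathbb T}(x_i,x_i')=1-|x_i-x_i'|\ge d_{\mathbb T}(x_i',0)$. To see the latter, note that if $x_i'<x_i$ then $1-x_i+x_i'\ge x_i'\ge d_{\mathbb T}(x_i',0)$. If $x_i'>x_i$ then $1-x_i'+x_i\ge 1-x_i'\ge d_{\mathbb T}(x_i',0)$.

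\emph{Case split.} If neither coordinate wraps around, then $d_{\alpha,\mathbb T^2}(\mathbf x,\mathbf x')=\max(|x_1-x_1'|^\alpha,|x_2-x_2'|)$ exactly, which is at least the minimum on the right-hand side of \eqref{eq:dis-basic}. If coordinate $i$ wraps around, then
\[
d_{\alpha,\mathbb T^2}(\mathbf x,\mathbf x')\ge d_{\mathbb T}(x_i,x_i')^{\alpha_i}\ge d_{\mathbb T}(x_i',0)^{\alpha_i}\ge d_{\alpha,\mathbb T^2}(\mathbf x',0_L),
\]
with $\alpha_1=\alpha$ and $\alpha_2=1$. Here we use that $t\mapsto t^\alpha$ is monotone, together with the formula for the distance to $0_L$. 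In either case \eqref{eq:dis-basic} holds.

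There is no real obstacle. The only point needing care is fixing the convention that $|x_i-x_i'|$ refers to representatives in $[0,1)$, which is what makes the wrap-around dichotomy valid.
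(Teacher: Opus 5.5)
Your proof is correct and takes the same approach as the paper. Both arguments rest on the coordinatewise bound $d_{\mathbb T}(x_i,x_i')\ge\min\bigl(|x_i-x_i'|,\,d_{\mathbb T}(x_i',0)\bigr)$ and then combine the two coordinates by a case check. You also make explicit two things the paper leaves implicit: the convention of representatives in $[0,1)$, and the formula $d_{\alpha,\mathbb T^2}(\mathbf x',0_L)=\min\bigl(d_{\mathbb T}(x_1',0)^\alpha,\,d_{\mathbb T}(x_2',0)\bigr)$.
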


\begin{proof}
Since 
\[
  d_{\alpha,\mathbb{T}^2}(\mathbf x,\mathbf x') := \max\left(d_{\mathbb{T}}(x_1,x_1')^{\alpha}, d_{\mathbb{T}}(x_2,x_2')\right), \]
  and 
  \begin{align*}
  & d_{\mathbb{T}}(x_1,x_1')^{\alpha}\ge \min(|x_1-x_1'|^\alpha, d_{\mathbb T}(x_1',0)^\alpha)\ge \min(|x_1-x_1'|^\alpha, d_{\alpha,\mathbb{T}^2}(\mathbf x', 0_L)), \\[4pt]
 & d_{\mathbb{T}}(x_2,x_2')\ge \min(|x_2-x_2'|, d_{\mathbb T}(x_2',0))\ge \min(|x_2-x_2'|, d_{\alpha,\mathbb{T}^2}(\mathbf x', 0_L))\end{align*} 
 the desired inequality follows by checking case to case. 
\end{proof}

\begin{definition}\label{def:Phi}
For a base $\mathbf M = (M_1, M_2)$ and an alphabet $\mathcal A$, define the \emph{expanding map} as
\begin{align*}
    \Phi: \ \bigcup_{\mathbf{a}=(a_1,a_2)\in \mathcal{A}} \left(\frac{a_1}{M_1},\frac{a_1+1}{M_1}\right)
    \times \left(\frac{a_2}{M_2},\frac{a_2+1}{M_2}\right)
    &\longrightarrow (0,1)^2 \nonumber\\[3pt]
    (x_1,x_2)\in\left(\frac{a_1}{M_1},\frac{a_1+1}{M_1}\right)
    \times \left(\frac{a_2}{M_2},\frac{a_2+1}{M_2}\right)
    &\longmapsto (M_1x_1-a_1,M_2x_2-a_2).
\end{align*}
\end{definition}
The expanding map satisfies the following distance inequality, 
which is analogous to Lemma 2.1 in \cite{dyatlov2017resonances}.
\begin{lemma} 
For $\mathbf x \in \mathbb T^2$ and $\mathbf y$ in the domain of $\Phi$,
\begin{equation}\label{distance-xPhiy}    d_{\alpha,\mathbb{T}^2}(\mathbf x,\Phi(\mathbf y))\geq \min\bigl(M_2\, d_{\alpha,\mathbb{T}^2}(\Phi^{-1}(\mathbf x),\mathbf y),\,d_{\alpha,\mathbb{T}^2}(\Phi(\mathbf y),0_L)\bigr)
\end{equation}
where $0_L$ is the lower boundary of $\mathbb T^2$ as in Lemma \ref{lem:distance_lowerbound}.
\end{lemma}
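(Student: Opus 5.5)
The idea is to apply Lemma~\ref{lem:distance_lowerbound} to the pair $\mathbf x$, $\Phi(\mathbf y)$, and then use that $\Phi$ multiplies $\alpha$-distances by at least $M_2$ on a single rectangle. Let $\mathbf a\in\mathcal A$ be the letter whose open rectangle $R_{\mathbf a}=\bigl(\frac{a_1}{M_1},\frac{a_1+1}{M_1}\bigr)\times\bigl(\frac{a_2}{M_2},\frac{a_2+1}{M_2}\bigr)$ contains $\mathbf y$, so that $\Phi(\mathbf y)=\mathbf M\mathbf y-\mathbf a$. We interpret $\Phi^{-1}(\mathbf x)$ as the point $\mathbf z:=(\mathbf x+\mathbf a)/\mathbf M\in\overline{R_{\mathbf a}}$, where $\mathbf x$ is represented in $[0,1)^2$. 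This is the preimage of $\mathbf x$ in the same branch as $\mathbf y$, analogous to \cite[Lemma 2.1]{dyatlov2017resonances}. If instead $\Phi^{-1}(\mathbf x)$ is read as the full preimage set, the infimum defining $d_{\alpha,\mathbb T^2}$ is at most the value at this $\mathbf z$, so the inequality for $\mathbf z$ implies it for the set.

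\emph{Step 1.} Apply \eqref{eq:dis-basic} with $\mathbf x'=\Phi(\mathbf y)$, using representatives in $[0,1)^2$:
\[
d_{\alpha,\mathbb T^2}(\mathbf x,\Phi(\mathbf y))\ \ge\ \min\Bigl(\max\bigl(|x_1-\Phi_1(\mathbf y)|^{\alpha},\,|x_2-\Phi_2(\mathbf y)|\bigr),\ d_{\alpha,\mathbb T^2}(\Phi(\mathbf y),0_L)\Bigr).
\]
The second entry already appears in \eqref{distance-xPhiy}. It therefore suffices to show that the first entry is at least $M_2\,d_{\alpha,\mathbb T^2}(\mathbf z,\mathbf y)$.

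\emph{Step 2.} Since $\mathbf x=\mathbf M\mathbf z-\mathbf a$, we have $x_j-\Phi_j(\mathbf y)=M_j(z_j-y_j)$ for $j=1,2$. Hence
\[
|x_1-\Phi_1(\mathbf y)|^\alpha=M_1^\alpha|z_1-y_1|^\alpha=M_2|z_1-y_1|^\alpha,
\qquad
|x_2-\Phi_2(\mathbf y)|=M_2|z_2-y_2|,
\]
using $M_1^\alpha=M_2$. So the first entry equals $M_2\max\bigl(|z_1-y_1|^\alpha,|z_2-y_2|\bigr)$. Because $d_{\mathbb T}(z_j,y_j)\le|z_j-y_j|$, this is at least $M_2\,d_{\alpha,\mathbb T^2}(\mathbf z,\mathbf y)$, which closes the argument. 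The exponent $\alpha$ in the first coordinate of the metric is exactly what makes both directions expand by the same factor $M_2$.

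\emph{Main obstacle.} The only delicate point is bookkeeping: fixing representatives so that Lemma~\ref{lem:distance_lowerbound} applies with genuine real differences, and confirming that the branch of $\Phi^{-1}$ is the one containing $\mathbf y$. Everything else is the computation in Step 2.
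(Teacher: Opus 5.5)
Your proof is correct and is essentially the paper's argument. You choose the preimage $\mathbf z$ of $\mathbf x$ in the same rectangle as $\mathbf y$, apply Lemma~\ref{lem:distance_lowerbound} to the pair $(\mathbf x,\Phi(\mathbf y))$, and use $M_1^\alpha=M_2$ to get $\max\bigl(|x_1-\Phi_1(\mathbf y)|^\alpha,|x_2-\Phi_2(\mathbf y)|\bigr)=M_2\max\bigl(|z_1-y_1|^\alpha,|z_2-y_2|\bigr)$, which is exactly what the paper does.

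The only differences are cosmetic. You use $d_{\mathbb T}(z_j,y_j)\le|z_j-y_j|$ directly, where the paper instead observes $|z_j-y_j|<1/M_2\le 1/2$. One small correction to your justification concerns the boundary case $\mathbf x\in 0_L$: there $\Phi^{-1}(\mathbf x)$ is empty, so your $\mathbf z$ does not belong to it. The inequality is still trivial in that case, because $d_{\alpha,\mathbb T^2}(\mathbf x,\Phi(\mathbf y))\ge d_{\alpha,\mathbb T^2}(\Phi(\mathbf y),0_L)$.
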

\begin{proof}
    We write $\mathbf{y}=(y_1,y_2)$ and $\Phi(\mathbf{y})=\mathbf{w}=(w_1,w_2)$. 
There exists $\mathbf a=(a_1,a_2)\in \mathcal{A}$ 
    such that 
    \[y_1 \in \left(\frac{a_1}{M_1},\frac{a_1+1}{M_1}\right) \quad \text{and}\quad y_2 \in \left(\frac{a_2}{M_2},\frac{a_2+1}{M_2}\right).\]
    Choose the unique $\mathbf{z}=(z_1,z_2)\in \Phi^{-1}(\mathbf{x})$ such that 
\[        z_1\in \left(\frac{a_1}{M_1},\frac{a_1+1}{M_1}\right) \quad \text{and}\quad  z_2\in \left(\frac{a_2}{M_2},\frac{a_2+1}{M_2}\right).
\]
Then 
    \[ \frac{1}{M_2}|x_1-w_1|^\alpha=|z_1-y_1|^\alpha<\frac{1}{M_2}\leq \frac{1}{2},\qquad
            \frac{1}{M_2}|x_2-w_2|=|z_2-y_2|<\frac{1}{M_2}\leq \frac{1}{2}.
    \]
 By Lemma \ref{lem:distance_lowerbound},  
 \begin{align*}d_{\alpha,\mathbb{T}^2}(\mathbf x,\Phi(\mathbf y)) 
 & \geq \min(M_2\, d_{\alpha,\mathbb{T}^2}(\mathbf{z},\mathbf y),\, d_{\alpha,\mathbb{T}^2}(\Phi(\mathbf y), 0_L))\\
 & \geq \min\bigl(M_2\, d_{\alpha,\mathbb{T}^2}(\Phi^{-1}(\mathbf x),\mathbf y),\,d_{\alpha,\mathbb{T}^2}(\Phi(\mathbf y),0_L)\bigr).\end{align*}
\end{proof}

Finally, we can write the quantum baker's map in \eqref{eq:BN} explicitly as follows.
\begin{lemma}\label{lem:BaN}  
For $\mathbf a\in \mathcal A$, the $\mathbf a$-piece quantum open baker's map given in \eqref{eq:BN} is 
\begin{multline*}
{B_{\mathbf{N}}^{\mathbf{a}}u(\mathbf{j})}={ \frac{\sqrt{M_1M_2}}{N_1N_2}\,}
       \sum_{\mathbf{m},\mathbf l\in\mathbb Z_{{\mathbf N}/{\mathbf M}}}
        \exp\left[2\pi i\, \left(\mathbf m\cdot \frac{\mathbf j-\mathbf l\mathbf M}{\mathbf N}+\mathbf a \cdot \frac{\mathbf j}{\mathbf M}\right)\right] \\ \cdot\,
\chi\left(\frac{\mathbf l\mathbf M}{\mathbf N}\right)\chi\left(\frac{\mathbf m\mathbf M}{\mathbf N}\right)  u\left(\mathbf{l}+\mathbf{a}\,\frac{\mathbf{N}}{\mathbf{M}}\right).
\end{multline*}
\end{lemma}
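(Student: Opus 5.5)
The plan is to unfold the definition \eqref{eq:BN} by composing the four elementary operators from right to left and reading off the kernel. First I would apply $\Pi_{\mathbf N,\mathbf a}$, which gives the function $\mathbf l\mapsto u(\mathbf l+\mathbf a\,\mathbf N/\mathbf M)$ on $\mathbb Z_{\mathbf N/\mathbf M}$. Multiplying by $\chi_{\mathbf N/\mathbf M}$ inserts the factor $\chi(\mathbf l/(\mathbf N/\mathbf M))=\chi(\mathbf l\mathbf M/\mathbf N)$. Next, applying $\mathcal F_{\mathbf N/\mathbf M}$ via \eqref{eq:DFT} at the scale $\mathbf N/\mathbf M$ gives
\[
\frac{\sqrt{M_1M_2}}{\sqrt{N_1N_2}}\sum_{\mathbf l\in\mathbb Z_{\mathbf N/\mathbf M}}\exp\Bigl(-2\pi i\,\frac{\mathbf m\mathbf M}{\mathbf N}\cdot\mathbf l\Bigr)\chi\Bigl(\frac{\mathbf l\mathbf M}{\mathbf N}\Bigr)u\Bigl(\mathbf l+\mathbf a\frac{\mathbf N}{\mathbf M}\Bigr),
\]
and multiplying by $\chi_{\mathbf N/\mathbf M}$ in the variable $\mathbf m$ contributes $\chi(\mathbf m\mathbf M/\mathbf N)$.

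Call the resulting function on $\mathbb Z_{\mathbf N/\mathbf M}$ $v(\mathbf m)$. The adjoint $\Pi^*_{\mathbf N,\mathbf a}$ places $v$ into $\ell^2_{\mathbf N}$ on the block $\mathbf a\,\mathbf N/\mathbf M+\mathbb Z_{\mathbf N/\mathbf M}$, setting $w(\mathbf m+\mathbf a\mathbf N/\mathbf M)=v(\mathbf m)$ and $w=0$ elsewhere. Applying $\mathcal F_{\mathbf N}^*$ then gives
\[
(N_1N_2)^{-1/2}\sum_{\mathbf m}\exp\Bigl(2\pi i\,\frac{\mathbf j}{\mathbf N}\cdot(\mathbf m+\mathbf a\mathbf N/\mathbf M)\Bigr)v(\mathbf m),
\]
and the phase splits as $\mathbf m\cdot\mathbf j/\mathbf N+\mathbf a\cdot\mathbf j/\mathbf M$.

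Collecting the factors, the two normalizations combine to $\sqrt{M_1M_2}/(N_1N_2)$. The phases combine to $\mathbf m\cdot(\mathbf j-\mathbf l\mathbf M)/\mathbf N+\mathbf a\cdot\mathbf j/\mathbf M$, using $\frac{\mathbf m\mathbf M}{\mathbf N}\cdot\mathbf l=\mathbf m\cdot\frac{\mathbf l\mathbf M}{\mathbf N}$. This is exactly the claimed formula.

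No step is a real obstacle; the only care needed is bookkeeping. Specifically, one must check that $\Pi^*$ is indeed the extension-by-zero embedding described above, and that $\mathbf a\mathbf N/\mathbf M\in\mathbb Z^2$ so the exponential is well defined modulo $\mathbb Z_{\mathbf N}$. One must also make sure that $\mathcal F^*$ carries the sign $+$ and the same $(N_1N_2)^{-1/2}$ normalization.
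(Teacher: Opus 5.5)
Your computation is correct and is the same direct unfolding of \eqref{eq:BN} that the paper relies on; the paper states this lemma as an explicit rewriting and gives no separate proof. Your identification of $\Pi^*_{\mathbf N,\mathbf a}$ as extension by zero onto the block $\mathbf a\,\mathbf N/\mathbf M+\mathbb Z_{\mathbf N/\mathbf M}$, the combined normalization $\sqrt{M_1M_2}/(N_1N_2)$, and the splitting of the phase all check out.
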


\subsection{Propagation of singularities}
We now establish the propagation of semiclassical singularities in position and frequency space under quantum evolution. This can be seen as a weak Egorov's theorem and guarantees the localization without any smoothness condition on the observables. For $\varphi: \mathbb T^2\to \mathbb R$, define its \emph{discretized function} $\varphi_{\mathbf N}\in \ell^2_\mathbf N$ and \emph{discretized Fourier multiplier} 
$\varphi_\mathbf N^\mathcal F: \ell^2\to \ell^2$
such that 
\[\varphi_{\mathbf N}(\mathbf j):=\varphi(\mathbf j/\mathbf N), \qquad  \varphi_\mathbf N^\mathcal F:=\mathcal F_\mathbf N^*\,\varphi_\mathbf N\,\mathcal F_\mathbf N,\] 
where $\varphi_\mathbf N$ is interpreted as a multiplication operator in the second formula.

\begin{proposition}[One-time propagation]\label{prop:one-time-propagation}
    Assume that $\varphi, \psi : \mathbb T^2 \to [0, 1]$ satisfy, for some $c > 0$ and 
    $0 \le \rho < 1$,
\begin{equation}\label{eq:condition-propagation}
    d_{\alpha,\mathbb{T}^2}(\Phi(\operatorname{supp}\psi \cap \Phi^{-1}(\operatorname{supp}\chi)), \operatorname{supp}\varphi) \ge c\,N_2^{-\rho}.
\end{equation}
Then
\begin{align}
    \|\varphi_{\mathbf{N}}\, B_{\mathbf{N}}\, \psi_{\mathbf{N}}\|_{{\ell_\mathbf N^2} \to {\ell_\mathbf N^2}} &= 
    \mathcal{O}_{c,\rho,\alpha,\chi}(N_2^{-\infty}), 
    \label{eq:Prop-phys} \\
    \|\psi_{\mathbf{N}}^{\mathcal{F}}\, B_{\mathbf{N}} \,\varphi_{\mathbf{N}}^{\mathcal{F}}\|_{{\ell_\mathbf N^2} \to {\ell_\mathbf N^2}} &= 
    \mathcal{O}_{c,\rho,\alpha,\chi}(N_2^{-\infty}),
    \label{eq:Prop-freq}
\end{align}
where $\mathcal{O}_{c,\rho,\alpha,\chi}(N^{-\infty})$ denotes a quantity that decays faster than any polynomial in $N$, and the implicit constants depend only on $c$, $\rho$, $\alpha$ and $\chi$. In particular, \eqref{eq:Prop-phys} and \eqref{eq:Prop-freq} hold when
\begin{equation}\label{eq:Distance condition psiphi for propagation of singularities}
    d_{\alpha,\mathbb{T}^2}(\operatorname{supp}\psi, \Phi^{-1}(\operatorname{supp}\varphi)) \ge cN_2^{-\rho}.
\end{equation}
\end{proposition}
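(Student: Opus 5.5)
The plan is to follow the one-dimensional argument of Dyatlov--Jin and work with the explicit kernel from Lemma~\ref{lem:BaN}. Since the pieces $B_{\mathbf N}^{\mathbf a}$ are finitely many ($|\mathcal A|\le M_1M_2$), it suffices to bound $\varphi_{\mathbf N}B^{\mathbf a}_{\mathbf N}\psi_{\mathbf N}$ for each fixed $\mathbf a$.

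Fix $\mathbf a$ and change variables $\mathbf j'=\mathbf l+\mathbf a\mathbf N/\mathbf M$. The matrix entry of $\varphi_{\mathbf N}B^{\mathbf a}_{\mathbf N}\psi_{\mathbf N}$ between $\mathbf j$ and $\mathbf j'$ is then
\[
\varphi(\mathbf j/\mathbf N)\,\psi(\mathbf j'/\mathbf N)\,\chi(\mathbf l\mathbf M/\mathbf N)
\]
times the factor
\[
\frac{\sqrt{M_1M_2}}{N_1N_2}\sum_{\mathbf m\in\mathbb Z_{\mathbf N/\mathbf M}}\chi\!\left(\frac{\mathbf m\mathbf M}{\mathbf N}\right)e^{2\pi i\,\mathbf m\cdot(\mathbf j-\mathbf l\mathbf M)/\mathbf N}.
\]
This is a product of two one-dimensional-type sums, but $\chi$ need not factor. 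I treat it as a 2D Riemann-type sum of the smooth compactly supported function $\chi$ at frequencies $(\mathbf j-\mathbf l\mathbf M)/\mathbf M$, taken modulo $\mathbf N/\mathbf M$.

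By Poisson summation, the $\mathbf m$-sum equals
\[
\frac{N_1N_2}{M_1M_2}\sum_{\mathbf q\in\mathbb Z^2}\hat\chi\!\left(\frac{\mathbf N}{\mathbf M}\,\mathbf q-\frac{\mathbf j-\mathbf l\mathbf M}{\mathbf M}\right)
\]
up to sign conventions. Each coordinate of $\hat\chi$ decays rapidly, so
\[
|\hat\chi(\xi_1,\xi_2)|\le C_K(1+|\xi_1|+|\xi_2|)^{-K}.
\]
It is therefore enough that, for each $\mathbf q$, at least one coordinate of the argument has size $\gtrsim N_2^{1-\rho}$. The support condition gives this. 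On the support, $\mathbf l\mathbf M/\mathbf N\in\operatorname{supp}\chi$ and $\mathbf j'/\mathbf N\in\operatorname{supp}\psi$, and $\Phi(\mathbf j'/\mathbf N)=\mathbf l\mathbf M/\mathbf N$. Hence \eqref{eq:condition-propagation} gives
\[
d_{\alpha,\mathbb T^2}\bigl(\mathbf j/\mathbf N,\ \mathbf l\mathbf M/\mathbf N\bigr)\ge cN_2^{-\rho}.
\]
So either $d_{\mathbb T}(j_1/N_1,\,l_1M_1/N_1)\ge (cN_2^{-\rho})^{1/\alpha}$ or $d_{\mathbb T}(j_2/N_2,\,l_2M_2/N_2)\ge cN_2^{-\rho}$.

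In the second case the second coordinate of the Poisson argument satisfies
\[
\Bigl|\tfrac{N_2}{M_2}q_2-\tfrac{j_2-l_2M_2}{M_2}\Bigr|\gtrsim N_2\cdot N_2^{-\rho}=N_2^{1-\rho},
\]
after absorbing the $\mathbb Z$-periodicity into the choice of $q_2$. In the first case the first coordinate is $\gtrsim N_1\cdot c^{1/\alpha}N_2^{-\rho/\alpha}=c^{1/\alpha}N_1^{1-\rho}\ge c^{1/\alpha}N_2^{1-\rho}$. This step is exactly where the $\alpha$-distance is calibrated: the exponent $1/\alpha$ converts an $N_2$-scale separation into an $N_1$-scale separation. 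Summing over $\mathbf q$ with the rapid decay of $\hat\chi$ gives entries of size $O(N_2^{-\infty})$ uniformly. Schur's test, with only polynomially many entries ($N_1N_2\le N_2^{2/\alpha}$), then gives \eqref{eq:Prop-phys}. The main care point is handling the wrap-around of the torus distance against the lattice $\frac{\mathbf N}{\mathbf M}\mathbb Z^2$ in the Poisson sum. One has to check that the nearest $q$ realizes $d_{\mathbb T}$, and that the remaining $q$ are even farther away.

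For \eqref{eq:Prop-freq} I conjugate by the Fourier transform. Using $\mathcal F_{\mathbf N}^*=\overline{\mathcal F_{\mathbf N}}$-type symmetries, $\mathcal F_{\mathbf N}B_{\mathbf N}^{\mathbf a}\mathcal F_{\mathbf N}^*$ is the adjoint of a map of the same form, up to the reflection $\mathbf j\mapsto -\mathbf j$ (as in Dyatlov--Jin). This reduces it to the physical-space estimate with the roles of $\varphi$ and $\psi$ swapped. Alternatively one computes its kernel directly: it carries the same Poisson-type structure, now with the dual relation $\mathbf y\mapsto(\mathbf y+\mathbf a)/\mathbf M$.

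Finally, \eqref{eq:Distance condition psiphi for propagation of singularities} implies \eqref{eq:condition-propagation} by the distance inequality \eqref{distance-xPhiy} applied with $\mathbf x\in\operatorname{supp}\varphi$ and $\mathbf y\in\operatorname{supp}\psi\cap\Phi^{-1}(\operatorname{supp}\chi)$. The second term there is bounded below by a constant, because $\operatorname{supp}\chi$ is compact in $(0,1)^2$ and hence at positive distance from $0_L$. This gives the bound $\min(M_2cN_2^{-\rho},c_\chi)\ge c'N_2^{-\rho}$.
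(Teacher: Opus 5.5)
Your proposal is correct and follows essentially the same route as the paper. The paper also writes the kernel via Lemma~\ref{lem:BaN}, applies Poisson summation in $\mathbf m$ (its Lemma~\ref{lem:nonstationary}), and uses the $\alpha$-distance to turn the $N_2^{-\rho}$ separation into a lower bound $\gtrsim N_1^{1-\rho}$ or $\gtrsim N_2^{1-\rho}$ on one coordinate of the argument of $\hat\chi$. It then uses conjugation and duality for \eqref{eq:Prop-freq}, and \eqref{distance-xPhiy} for the final claim. Your Schur test and wrap-around check only make explicit what the paper leaves implicit, and dropping the unimodular phase $e^{2\pi i\mathbf a\cdot\mathbf j/\mathbf M}$ from your kernel is harmless. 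For the frequency estimate, no reflection is actually needed: $\mathcal F_{\mathbf N}B_{\mathbf N}\mathcal F_{\mathbf N}^*=B_{\mathbf N}^{T}$, so the norm in \eqref{eq:Prop-freq} equals $\|\varphi_{\mathbf N}B_{\mathbf N}\psi_{\mathbf N}\|$ exactly.
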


\begin{proof}
By Lemma \ref{lem:BaN} and direct computation, for all $\mathbf j\in \mathbb Z_{\mathbf N}$ we have
    \[ (\varphi_{\mathbf{N}} B_{\mathbf{N}} \psi_{\mathbf{N}}u)(\mathbf{j})=\sum_{\mathbf{a}\in \mathcal{A}}\,\sum_{\mathbf{l}\in \mathbb Z_{\mathbf N/\mathbf M}}         
 A_{\mathbf{j}\mathbf{l}}^{\mathbf{a}}\ u\left(\mathbf{l}+\mathbf{a}\,\frac{\mathbf{N}}{\mathbf{M}}\right), \]
 where 
  \begin{align*}   
            A_{\mathbf{j}\mathbf{l}}^{\mathbf{a}}&:=\frac{\sqrt{M_1M_2}}{N_1N_2}\, \varphi\left(\frac{\mathbf{j}}{\mathbf{N}}\right)\,
            \exp\left(2\pi i\,\mathbf a\cdot\frac{\mathbf j}{\mathbf M}
            \right)\,\chi\left(\frac{\mathbf{l}\mathbf{M}}{\mathbf{N}}\right)\,\psi\left(\frac{\mathbf{l}}{\mathbf{N}}+
            \frac{\mathbf{a}}{\mathbf{M}}\right)\,\tilde{A}_{\mathbf{j}\mathbf{l}}, \\[6pt]
            \tilde{A}_{\mathbf{j}\mathbf{l}}&:=\sum_{\mathbf{m}\in \mathbb Z_{\mathbf N/\mathbf M}}
            \exp\left(2\pi i\,\mathbf m\cdot \frac{\mathbf j-\mathbf l\mathbf M}{\mathbf N}  \right)\,
           \chi\left(\frac{\mathbf{m}\mathbf{M}}{\mathbf{N}}\right).
        \end{align*}
Note that, by the Definition \ref{def:Phi} of $\Phi$, for all $\mathbf{j}\in {\mathbb Z_{\mathbf N}}$ and  
$\mathbf{l}\in {\mathbb Z_{\mathbf N/\mathbf M}}$, the non-vanishing condition $A_{\mathbf{j}\mathbf{l}}^{\mathbf{a}}\neq 0$ forces
\begin{equation}\label{eq:supp}\frac{\mathbf j}{\mathbf N}\in \operatorname{supp} \varphi,
\quad \frac{\mathbf l}{\mathbf N}+\frac{\mathbf a}{\mathbf M} \in \operatorname{supp} \psi,
\quad \frac{\mathbf l\mathbf M}{\mathbf N}=\Phi\left(\frac{\mathbf l}{\mathbf N}+\frac{\mathbf a}{\mathbf M}\right) \in \operatorname{supp} \chi.
\end{equation}
Set $\mathbf{b}:=\mathbf{j}-\mathbf{l}\mathbf{M}$. Using the fact 
that $\operatorname{supp} \chi\subset (0,1)^2$, we can extend 
the summation in $\tilde{A}_{\mathbf{j}\mathbf{l}}$ to all $\mathbf{m}\in \mathbb{Z}^2$:
\[
    \tilde{A}_{\mathbf{j}\mathbf{l}}= \tilde{A}_{\mathbf b}=\sum_{\mathbf{m}\in \mathbb{Z}^2}
    \exp\left(2\pi i\,  \frac{\mathbf b\mathbf m}{\mathbf N}\right)\chi\left(\frac{\mathbf{m}\mathbf{M}}{\mathbf{N}}\right).
\]
To apply Lemma \ref{lem:nonstationary}, we need to verify the condition
\[d_{\alpha,\mathbb{T}^2} \left(\frac{\mathbf b}{\mathbf N},0\right) 
    \ge cN_2^{-\rho}.\]
Assume the support-separation condition \eqref{eq:condition-propagation} holds. By  \eqref{eq:dis-basic},  \eqref{eq:condition-propagation} and \eqref{eq:supp},
for any $\mathbf{j},\mathbf{l}$ with $A_{\mathbf{j}\mathbf{l}}^{\mathbf{a}}\neq 0$, we have 
\[
d_{\alpha,\mathbb{T}^2}\left(\frac{\mathbf b}{\mathbf N},0\right)=    d_{\alpha,\mathbb{T}^2}\left(\frac{\mathbf j}{\mathbf N},\frac{\mathbf l\mathbf M}{\mathbf N}\right)
    \geq  d_{\alpha,\mathbb{T}^2}\left(\Phi(\operatorname{supp}\psi \cap \Phi^{-1}(\operatorname{supp}\chi)), \operatorname{supp}\varphi\right) \ge c\,N_2^{-\rho}.
\]
Hence, by Lemma \ref{lem:nonstationary}, we obtain 
\[
 \sup_{\mathbf j, \mathbf l, \mathbf a}|A_{\mathbf{j}\mathbf{l}}^{\mathbf a}|\le \sup_{\mathbf j, \mathbf l} |\tilde{A}_{\mathbf{j}\mathbf{l}}|=\mathcal{O}_{c,\rho,\alpha,\chi}(N_2^{-\infty}),\] and thus 
\eqref{eq:Prop-phys} holds.
By conjugation and duality, \eqref{eq:Prop-freq} follows from \eqref{eq:Prop-phys}; see the proof in \cite[Proposition 2.3]{dyatlov2017resonances}.

It remains to prove \eqref{eq:Prop-phys} under the assumption \eqref{eq:Distance condition psiphi for propagation of singularities}.
We next show that \eqref{eq:Distance condition psiphi for propagation of singularities} implies 
\eqref{eq:condition-propagation} for some new $c>0$. Actually, 
for $\mathbf x\in  \operatorname{supp} \varphi$ and $\mathbf y\in \operatorname{supp} \psi\cap \Phi^{-1}(\operatorname{supp}\chi)$, 
since $\chi\in C_c^\infty((0,1)^2)$, there exists $c_1>0$ depending only on 
$\operatorname{supp} \chi$ such that
\begin{equation}\label{eq:d-chi-0}
    d_{\alpha,\mathbb{T}^2}(\Phi(\mathbf y),0_L)\geq c_1>0,
\end{equation}
and therefore, by \eqref{distance-xPhiy}, we have
\[
    \begin{aligned}
        d_{\alpha,\mathbb{T}^2}(\mathbf x,\Phi(\mathbf y))&\geq 
    \min(d_{\alpha,\mathbb{T}^2}(\Phi(\mathbf y),0_L),
    M_2d_{\alpha,\mathbb{T}^2}(\Phi^{-1}(\mathbf x),\mathbf y))\\
    &\geq \min(c_1, M_2 cN_2^{-\rho})\geq c'N_2^{-\rho}
    \end{aligned}
\]
for some new $c'>0$. This completes the proof of the proposition.
\end{proof}

\begin{lemma}[Nonstationary phase]\label{lem:nonstationary}
If $d_{\alpha,\mathbb{T}^2}\left(\frac{\mathbf b}{\mathbf N},0\right) \ge cN_2^{-\rho}$ for some constants $c>0$ and $0\le \rho<1$, then
\[\tilde{A}_{\mathbf b}=\sum_{\mathbf{m}\in \mathbb{Z}^2}
\exp\left(2\pi i\, \frac{\mathbf b\mathbf m}{\mathbf N}\right)\chi\left(\frac{\mathbf{m}\mathbf{M}}{\mathbf{N}}\right)=\mathcal{O}_{c,\rho,\alpha,\chi}(N_2^{-\infty}).
\]
\end{lemma}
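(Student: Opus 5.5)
The plan is to use Poisson summation, which turns the lattice sum into a sum of Fourier transform values, and then exploit the Schwartz decay of $\hat\chi$. Because $\chi\in C_c^\infty((0,1)^2)$, the function $g(\mathbf m):=\exp(2\pi i\,\mathbf b\mathbf m/\mathbf N)\,\chi(\mathbf m\mathbf M/\mathbf N)$ is Schwartz on $\mathbb R^2$. By the Poisson summation formula,
\[
\tilde A_{\mathbf b}=\sum_{\mathbf k\in\mathbb Z^2}\hat g(\mathbf k),\qquad
\hat g(\mathbf k)=\frac{N_1N_2}{M_1M_2}\,\hat\chi\Bigl(\frac{\mathbf N}{\mathbf M}\Bigl(\mathbf k-\frac{\mathbf b}{\mathbf N}\Bigr)\Bigr),
\]
where $\hat\chi(\xi)=\int e^{-2\pi i x\cdot\xi}\chi(x)\,dx$. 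Each $\hat\chi$ satisfies $|\hat\chi(\xi_1,\xi_2)|\le C_K\langle\xi_1\rangle^{-K}\langle\xi_2\rangle^{-K}$ for every $K$. Consequently each term of the sum factors into a product of one-dimensional bounds, namely
\[
N_1N_2\,\bigl\langle N_1(k_1-b_1/N_1)/M_1\bigr\rangle^{-K}\,\bigl\langle N_2(k_2-b_2/N_2)/M_2\bigr\rangle^{-K}.
\]

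Next I will use the hypothesis. Write $\theta_i:=b_i/N_i$. The assumption $d_{\alpha,\mathbb T^2}(\mathbf b/\mathbf N,0)\ge cN_2^{-\rho}$ gives one of two cases:
\begin{itemize}
\item either $d_{\mathbb T}(\theta_2,0)\ge cN_2^{-\rho}$,
\item or $d_{\mathbb T}(\theta_1,0)\ge (cN_2^{-\rho})^{1/\alpha}=c^{1/\alpha}N_1^{-\rho}$, using $N_2=N_1^\alpha$.
\end{itemize}
In the first case $|k_2-\theta_2|\ge d_{\mathbb T}(\theta_2,0)$ for every $k_2\in\mathbb Z$. This gives $N_2|k_2-\theta_2|/M_2\gtrsim N_2^{1-\rho}$. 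For $|k_2|$ large the better bound $|k_2-\theta_2|\ge |k_2|/2$ holds, so summing over $k_2$ gives
\[
\sum_{k_2}\langle N_2(k_2-\theta_2)/M_2\rangle^{-K}\lesssim_K N_2^{-K(1-\rho)}.
\]
The sum over $k_1$ of $\langle N_1(k_1-\theta_1)/M_1\rangle^{-K}$ is $O(1)$, since the points $N_1(k_1-\theta_1)/M_1$ are separated by $N_1/M_1\ge 1$. The second case is symmetric and gives a factor $N_1^{-K(1-\rho)}\le N_2^{-K(1-\rho)}$.

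In both cases $|\tilde A_{\mathbf b}|\lesssim_K N_1N_2\,N_2^{-K(1-\rho)}$. Since $N_1=N_2^{1/\alpha}$, this equals $N_2^{1+1/\alpha-K(1-\rho)}$. Because $\rho<1$, choosing $K$ large makes this $O(N_2^{-L})$ for any $L$, with constants depending on $c,\rho,\alpha,\chi$.

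The only point needing care is the uniform summation over the lattice $\mathbf k$. The prefactor $N_1N_2$ must be absorbed, and the sum over $\mathbf k$ must converge uniformly in $\mathbf b$. I will handle this by splitting off the region $|k_i|\ge 2$, where $\langle N_i(k_i-\theta_i)/M_i\rangle\gtrsim N_i|k_i|$ gives a trivially summable and rapidly decaying tail. Over the finitely many remaining $k_i$, I apply the lower bound coming from the torus distance. No other obstacle is expected.
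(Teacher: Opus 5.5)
Your proposal is correct and follows essentially the same route as the paper: Poisson summation to $\frac{N_1N_2}{M_1M_2}\sum_{\mathbf l}\hat\chi\bigl(\frac{\mathbf N}{\mathbf M}(\mathbf l-\frac{\mathbf b}{\mathbf N})\bigr)$, then Schwartz decay of $\hat\chi$, then the torus-distance lower bound in whichever coordinate realizes the $\alpha$-distance. Your factored bound $\langle\xi_1\rangle^{-K}\langle\xi_2\rangle^{-K}$ makes the uniform summation over the lattice, and the absorption of the $N_1N_2$ prefactor, somewhat cleaner than the paper's isotropic estimate.
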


\begin{proof}
Applying the Poisson summation formula to the summand considered as a function of $\mathbf m$,
\[
\exp\left(2\pi i\, \frac{\mathbf b\mathbf m}{\mathbf N}\right)\chi\left(\frac{\mathbf{m}\mathbf{M}}{\mathbf{N}}\right),
\]
we obtain
\[
\tilde{A}_{\mathbf b}=\frac{N_1N_2}{M_1M_2}\,\sum_{\mathbf{l}\in \mathbf{Z}^2}\widehat{\chi}\left(\frac{\mathbf N\mathbf l-\mathbf b}{\mathbf M}\right).
\]
By the Schwartz decay of $\widehat\chi$ and the relation $N_2=N_1^\alpha$, we have, for an arbitrarily large integer $L$ which may change in the proof, the estimate
\[
|\tilde{A}_{\mathbf b}| \le \frac{N_1N_2}{M_1M_2}\, \sum_{\mathbf{l}\in \mathbf{Z}^2} \left|\frac{\mathbf N}{\mathbf M}  \left(\mathbf l-\frac{\mathbf b}{\mathbf N}\right)\right|^{-L} \lesssim_{\alpha,\chi} N_2^{-L} +
\frac{N_1N_2}{M_1M_2}\,\sum_{l_1\le 1, l_2\le 1} \left|\frac{\mathbf N}{\mathbf M}  \left(\mathbf l-\frac{\mathbf b}{\mathbf N}\right)\right|^{-L}.
\]
Assume that
\[
d_\mathbb T\left(\frac{b_2}{N_2},0\right)=d_{\alpha,\mathbb{T}^2} \left(\frac{\mathbf b}{\mathbf N},0\right) 
    \ge cN_2^{-\rho}
\]
for some constants $c>0$ and $0\le \rho<1$. Then for arbitratily large $L$
\[
|\tilde{A}_{\mathbf b}| 
\lesssim N_2^{-L} +
\frac{N_1N_2}{M_1M_2}\,
(cN_2^{1-\rho}/M_2)^{-L}\lesssim_{c,\rho,\alpha,\chi} N_2^{-L},
\]
which means that
\[
\tilde{A}_{\mathbf b}=\mathcal{O}_{c,\alpha,\rho}(N_2^{-\infty}).
\]
The proof is the same for the case
\[
d_\mathbb T\left(\frac{b_1}{N_1},0\right)^\alpha=d_{\alpha,\mathbb{T}^2} \left(\frac{\mathbf b}{\mathbf N},0\right) 
    \ge cN_2^{-\rho}.
\]
\end{proof}

Now we turn to the case where we iterate the quantum map up to (almost) twice the \emph{Ehrenfest time} $k$. The classical map has expansion rates $\mathbf M = (M_1, M_2)$, and the semiclassical small parameter is $h = (2\pi \mathbf M^k)^{-1} = ((2\pi M_1^k)^{-1}, (2\pi M_2^k)^{-1})$. Therefore, propagation up to twice the Ehrenfest time corresponds to raising the quantum map $B_{\mathbf N}$ to the power $k$.

\begin{proposition}[Long-times propagation]\label{prop:Propagation of singularities for long times}
Assume that $\varphi, \psi : \mathbb T^2 \to [0, 1]$ satisfy that there exist $c > 0$ and $0 \le \rho < 1$ such that for every integer $\tilde{k} \in [1, k]$,
\begin{equation}\label{eq:2.18 in Dyatlov jin}
    d_{\alpha,\mathbb{T}^2}(\operatorname{supp}\psi, \Phi^{-\tilde{k}}
    (\operatorname{supp}\varphi)) \ge cN_2^{-\rho}.
\end{equation}
Then
\begin{align}
    \|\varphi_{\mathbf{N}}(B_\mathbf{N})^{\tilde{k}}\psi_{\mathbf{N}}\|_{{\ell_\mathbf N^2} \to {\ell_\mathbf N^2}} &= \mathcal{O}(N_2^{-\infty}),
     \label{eq:2.19 in Dyatlov jin} \\
    \|\psi_{\mathbf{N}}^{\mathcal{F}}(B_{\mathbf{N}})^{\tilde{k}}\varphi_{\mathbf{N}}^{\mathcal{F}}\|_{{\ell_\mathbf N^2} \to {\ell_\mathbf N^2}} &= 
    \mathcal{O}(N_2^{-\infty}),
    \label{eq:2.20 in Dyatlov jin}
\end{align}
where the implicit constants in $\mathcal{O}(N_2^{-\infty})$ depend only on $c$, $\rho$, $\alpha$ and $\chi$.
\end{proposition}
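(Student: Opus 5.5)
We follow the scheme of \cite[Proposition 2.4]{dyatlov2017resonances}: insert a chain of intermediate cutoffs between $\varphi$ and $\psi$ and apply Proposition~\ref{prop:one-time-propagation} at each step. Fix $\tilde k\in[1,k]$ and put $\rho'=(1+\rho)/2\in(\rho,1)$. For $0\le r\le \tilde k$ let $\varphi^{(r)}:\mathbb T^2\to[0,1]$ equal $1$ on the $\tfrac c4 N_2^{-\rho}M_2^{-(\tilde k-r)}$-neighbourhood (in $d_{\alpha,\mathbb T^2}$) of $\Phi^{-(\tilde k-r)}(\operatorname{supp}\varphi)$. We let it be supported in the $\tfrac c2 N_2^{-\rho}M_2^{-(\tilde k-r)}$-neighbourhood, and take $\varphi^{(\tilde k)}=\varphi$. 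The cutoffs need only be measurable; Proposition~\ref{prop:one-time-propagation} only uses supports.

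\textbf{Step 1 (telescoping).} Write $\varphi_{\mathbf N}B_{\mathbf N}^{\tilde k}\psi_{\mathbf N}$ by successively inserting $1=\varphi^{(r)}_{\mathbf N}+(1-\varphi^{(r)})_{\mathbf N}$ to the right of each factor of $B_{\mathbf N}$:
\[
\varphi_{\mathbf N}B_{\mathbf N}^{\tilde k}\psi_{\mathbf N}
=\sum_{r=1}^{\tilde k}\varphi_{\mathbf N}B_{\mathbf N}^{\tilde k-r}\,\varphi^{(r)}_{\mathbf N}B_{\mathbf N}(1-\varphi^{(r-1)})_{\mathbf N}\cdots
+\varphi_{\mathbf N}B_{\mathbf N}\varphi^{(\tilde k-1)}_{\mathbf N}\cdots B_{\mathbf N}\varphi^{(0)}_{\mathbf N}\psi_{\mathbf N}.
\]
Every term except the last contains a factor $\varphi^{(r)}_{\mathbf N}B_{\mathbf N}(1-\varphi^{(r-1)})_{\mathbf N}$. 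The remaining factors have norm $\le1$, since $\|B_{\mathbf N}\|\le1$ and the multipliers take values in $[0,1]$.

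\textbf{Step 2 (one-step estimates).} We check \eqref{eq:Distance condition psiphi for propagation of singularities} for the pair $\varphi^{(r)}$ and $1-\varphi^{(r-1)}$. The support of $1-\varphi^{(r-1)}$ avoids the $\tfrac c4 N_2^{-\rho}M_2^{-(\tilde k-r+1)}$-neighbourhood of $\Phi^{-(\tilde k-r+1)}(\operatorname{supp}\varphi)$. Using \eqref{distance-xPhiy} (inverse branches contract $d_{\alpha,\mathbb T^2}$ by $M_2$, as in its proof), $\Phi^{-1}(\operatorname{supp}\varphi^{(r)})$ lies in the $\tfrac c2 N_2^{-\rho}M_2^{-(\tilde k-r)-1}$-neighbourhood of $\Phi^{-(\tilde k-r+1)}(\operatorname{supp}\varphi)$.

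This naive bookkeeping gives no gap, so the radii must be chosen as in \cite{dyatlov2017resonances}: inner radius $a_r$ and outer radius $b_r$ with $b_r/M_2<a_{r-1}$ and $a_{r-1}-b_r/M_2\gtrsim N_2^{-\rho'}$. Take $a_r=cN_2^{-\rho}(1/2+r/(4k))$ shrunk appropriately. Since $M_2^{-(\tilde k-r)}\ge N_2^{-1}$ for $r\ge 0$, all separations are $\gtrsim N_2^{-\rho}/k\ge N_2^{-\rho'}$. Proposition~\ref{prop:one-time-propagation} with exponent $\rho'$ then gives each one-step term $\mathcal O(N_2^{-\infty})$ uniformly in $r$. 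Summing $\tilde k\le k=O(\log N_2)$ terms preserves $\mathcal O(N_2^{-\infty})$.

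\textbf{Step 3 (last term).} The last term contains $\varphi^{(0)}_{\mathbf N}\psi_{\mathbf N}$. By \eqref{eq:2.18 in Dyatlov jin}, $\operatorname{supp}\psi$ is at distance $\ge cN_2^{-\rho}$ from $\Phi^{-\tilde k}(\operatorname{supp}\varphi)$, while $\varphi^{(0)}$ is supported within $\tfrac c2 N_2^{-\rho}$ of it. Hence $\varphi^{(0)}\psi=0$ and this term vanishes.

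Estimate \eqref{eq:2.20 in Dyatlov jin} follows from \eqref{eq:2.19 in Dyatlov jin} by the same conjugation and duality argument as for \eqref{eq:Prop-freq}. The main obstacle is Step 2: choosing the nested radii so that each one-step separation stays above $N_2^{-\rho'}$ uniformly over $k\sim\log N_2$ steps, while the contraction of $\Phi^{-1}$ is accounted for only in $d_{\alpha,\mathbb T^2}$. A further point is that Proposition~\ref{prop:one-time-propagation} is applied with an explicit $c$ varying like $1/k$, so its $\mathcal O(N_2^{-\infty})$ constants must be checked to be polynomial in $c^{-1}$. I would first try to avoid the $1/k$ loss by using the geometric factor $M_2^{-(\tilde k-r)}$ directly, as in Dyatlov--Jin.
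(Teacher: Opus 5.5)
Your skeleton (telescoping, a one-step estimate for each factor $\varphi^{(r)}_{\mathbf N}B_{\mathbf N}(1-\varphi^{(r-1)})_{\mathbf N}$, vanishing of the last term) matches the paper's. However, Step 2, which carries all the content, is never actually carried out, and the option you end up favouring fails.

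\textbf{The radii.} Radii proportional to $N_2^{-\rho}M_2^{-(\tilde k-r)}$ go in the wrong direction. Besides giving no gap (as you note), they shrink to about $N_2^{-\rho-1}$ when $\tilde k-r\approx k$. That is far below the uncertainty scale $N_2^{-1}$, where Lemma~\ref{lem:nonstationary} gives nothing. So the inference ``$M_2^{-(\tilde k-r)}\ge N_2^{-1}$, hence all separations are $\gtrsim N_2^{-\rho}/k\ge N_2^{-\rho'}$'' does not follow, and your closing plan to use this factor ``as in Dyatlov--Jin'' would fail. In \cite{dyatlov2017resonances} and in the paper, the thickening added at each step is a fixed $c'N_2^{-\rho}$. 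The contraction of the inverse branches then makes the accumulated thickening a convergent geometric series, at most $c'N_2^{-\rho}/(M_2-1)$ (this is the chain-of-points argument), so every radius stays of order $N_2^{-\rho}$. Your arithmetic radii $cN_2^{-\rho}(1/2+r/(4k))$ would work, but the gap then comes from $M_2\ge 2$, not from the $r/(4k)$ increments, so there is no $1/k$ loss to track. Separately, in your telescoping identity the left factor must be the chain $\varphi_{\mathbf N}B_{\mathbf N}\varphi^{(\tilde k-1)}_{\mathbf N}\cdots B_{\mathbf N}\varphi^{(r)}_{\mathbf N}$, not $\varphi_{\mathbf N}B_{\mathbf N}^{\tilde k-r}\varphi^{(r)}_{\mathbf N}$.

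\textbf{The torus.} Your claim that inverse branches contract $d_{\alpha,\mathbb T^2}$ by $M_2$ is false on the torus. Neighbourhoods wrap around, which genuinely happens in the application, where $\varphi\equiv 1$. That is why \eqref{distance-xPhiy} carries the extra term $d_{\alpha,\mathbb T^2}(\Phi(\mathbf y),0_L)$. Consequently you cannot verify \eqref{eq:Distance condition psiphi for propagation of singularities} for your pairs in general. You must instead verify \eqref{eq:condition-propagation}, which only involves $\mathbf y\in\Phi^{-1}(\operatorname{supp}\chi)$, and use $c_1=d_{\alpha,\mathbb T^2}(\operatorname{supp}\chi,0_L)>0$. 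This is why the paper builds $\Phi^{-1}(\operatorname{supp}\chi)$ into its cutoffs.

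\textbf{A fix within your approach.} With that correction, your direct construction closes with a single radius $\delta=\frac c2 N_2^{-\rho}$, after shrinking $c$ so that $M_2\delta<c_1$. For $r<\tilde k$, let $\varphi^{(r)}$ be the indicator of the closed $\delta$-neighbourhood of $\Phi^{-(\tilde k-r)}(\operatorname{supp}\varphi)$. Take $\mathbf y\in\operatorname{supp}(1-\varphi^{(r-1)})\cap\Phi^{-1}(\operatorname{supp}\chi)$, and $\mathbf x$ within $\delta$ of some $\mathbf z\in\Phi^{-(\tilde k-r)}(\operatorname{supp}\varphi)$. All preimages of $\mathbf z$ lie in $\Phi^{-(\tilde k-r+1)}(\operatorname{supp}\varphi)$, which is at distance at least $\delta$ from $\mathbf y$. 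Then \eqref{distance-xPhiy} gives $d_{\alpha,\mathbb T^2}(\mathbf z,\Phi(\mathbf y))\ge\min(M_2\delta,c_1)=M_2\delta$, hence $d_{\alpha,\mathbb T^2}(\mathbf x,\Phi(\mathbf y))\ge (M_2-1)\delta$. This is \eqref{eq:condition-propagation} with the same exponent $\rho$ at every step. Your Step 3 then applies verbatim, since $\delta<cN_2^{-\rho}$.

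This version is slightly more direct than the paper's recursively defined supports, which need the chain argument to locate $\operatorname{supp}\varphi^{(\tilde k)}$. As written, though, the proposal does not reach it.
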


\begin{proof}
\emph{Step 1.} It suffices to construct a sequence of functions (see \cite[Figure 6]{dyatlov2017resonances} for a one-dimensional picture; the main difference here is that we use the new $\alpha$-distance)
\[
    \varphi^{(j)} : \mathbb T^2 \to [0, 1], \quad \psi^{(j)} := 1 - \varphi^{(j)}, \quad j = 0, \dots, \tilde{k},
\]
such that 
\begin{equation}\label{eq:2.22 in Dyatlov jin} 
 \varphi^{(0)}\varphi = \varphi, \quad \psi^{(0)}\varphi = 0, \quad %
    \psi^{(\tilde{k})}\psi = \psi,  \quad \varphi^{(\tilde{k})}\psi = 0,\end{equation} 
and, for some $c' > 0$ depending only on $c$ and $\chi$,
 \begin{equation} \label{eq:2.23 in Dyatlov jin} 
  d_{\alpha,\mathbb{T}^2}(\Phi(\operatorname{supp}\psi^{(j+1)} \cap \Phi^{-1}(\operatorname{supp}\chi)), \operatorname{supp}\varphi^{(j)}) 
    \ge c'N^{-\rho}, 
    \quad j = 0, \dots, \tilde{k}-1.
\end{equation}
Actually, if we have this sequence of functions, then by inserting the identity $1 = \varphi_{\mathbf N}^{(j)} + \psi_{\mathbf N}^{(j)}$ after each $j$-th factor $B_{\mathbf N}$ and using \eqref{eq:2.22 in Dyatlov jin}, we obtain
\begin{gather*}
    \varphi_{\mathbf{N}}(B_\mathbf{N})^{\tilde{k}}\psi_{\mathbf{N}} = \sum_{j=0}^{\tilde{k}-1} A_j'(\varphi_{\mathbf{N}}^{(j)}
    B_\mathbf{N}\psi_\mathbf{N}^{(j+1)})A_j'', \\
    A_j' = \varphi_\mathbf{N}(B_\mathbf{N})^j, \quad A_j'' = (B_\mathbf{N}\psi_\mathbf{N}^{(j+2)})\dots(B_\mathbf{N}\psi_\mathbf{N}^{(\tilde{k})})
    \psi_\mathbf{N}.
\end{gather*}
Clearly,
\[
    \|A_j'\|_{{\ell_\mathbf N^2} \to {\ell_\mathbf N^2}}, \|A_j''\|_{{\ell_\mathbf N^2} \to 
    {\ell_\mathbf N^2}} \le 1.
\]
Applying the one-time propagation result \eqref{eq:Prop-phys} together with \eqref{eq:2.23 in Dyatlov jin}, we get
\[
    \|\varphi_{\mathbf{N}}^{(j)} B_N \psi_\mathbf{N}^{(j+1)}\|_{{\ell_\mathbf N^2} \to {\ell_\mathbf N^2}} = \mathcal{O}(N_2^{-\infty}).
\]
This concludes the proof of \eqref{eq:2.19 in Dyatlov jin} since $\tilde{k}\leq k=\mathcal{O}(\log N_2)$. 
The other estimate \eqref{eq:2.20 in Dyatlov jin} follows from \eqref{eq:Prop-freq} 
by the same argument.

\emph{Step 2.} We now construct the sequence of functions $(\varphi^{(j)})_{j=0}^{\tilde k}$. 
Fix $c' > 0$, depending only on $c$ and $\chi$, to be chosen later in \eqref{eq:2.26 in Dyatlov jin} and \eqref{eq:2.27 in Dyatlov jin} below. Define
\[
    \varphi^{(0)}(\mathbf x) = 
    \begin{cases} 
        1, & \mathbf x \in \operatorname{supp} \varphi, \\ 
        0, & \text{otherwise}. 
    \end{cases}
\]
Then $\psi^{(0)}\varphi = 0$. 
For $j = 1, \dots, \tilde{k}$, we define inductively
\[
    \varphi^{(j)}(\mathbf y) = 
    \begin{cases} 
        1, & \mathbf y \in \Phi^{-1}(\operatorname{supp} \chi) \quad \text{and} \quad d_{\alpha,\mathbb{T}^2}(\Phi(\mathbf y), 
        \operatorname{supp} \varphi^{(j-1)}) \le c'N_2^{-\rho}, \\ 
        0, & \text{otherwise}. 
    \end{cases}
\]
Then \eqref{eq:2.23 in Dyatlov jin} holds, so it remains to prove $\psi^{(\tilde{k})}\psi = \psi$ in \eqref{eq:2.22 in Dyatlov jin}. The latter follows from the fact that
\begin{equation}\label{eq:2.24 in Dyatlov jin}
    \operatorname{supp} \varphi^{(\tilde{k})} \cap \operatorname{supp} \psi = \emptyset.
\end{equation}
To show \eqref{eq:2.24 in Dyatlov jin}, we note that for any point $\mathbf{p}_{\tilde{k}} \in \operatorname{supp} \varphi^{(\tilde{k})} \subset \Phi^{-1}(\operatorname{supp} \chi)$, there exists a sequence of points $\mathbf{p}_0, \dots, \mathbf{p}_{\tilde{k}} \in \mathbb T^2$ such that
\begin{equation}\label{eq:pjsequence}
    \mathbf{p}_0 \in \operatorname{supp} \varphi; \quad \mathbf{p}_j \in 
    \Phi^{-1}(\operatorname{supp} \chi), \quad 
    d_{\alpha,\mathbb{T}^2}(\Phi(\mathbf{p}_j), \mathbf{p}_{j-1}) \le c'N_2^{-\rho}, 
    \quad j = 1, \dots, \tilde{k}.
\end{equation}
By \eqref{eq:2.18 in Dyatlov jin} it then suffices to prove that
\begin{equation}\label{eq:2.25 in Dyatlov jin}
    d_{\alpha,\mathbb{T}^2}(\mathbf{p}_{\tilde{k}}, \Phi^{-\tilde{k}}(\mathbf{p}_0)) 
    < cN_2^{-\rho}.
\end{equation}
Since $\chi\in C_c^\infty$, there exists $c_1>0$ such that  
\[
    c_1 = d_{\alpha,\mathbb{T}^2}(\operatorname{supp} \chi, 0_L) > 0.
\]
This has been used in the proof of Proposition \ref{prop:one-time-propagation} (see \eqref{eq:d-chi-0}). 
In view of \eqref{distance-xPhiy} and \eqref{eq:pjsequence} we have for each $j = 1, \dots, \tilde{k}$,
\begin{align}
    \min \{c_1, M_2\, d_{\alpha,\mathbb{T}^2}
    (\mathbf{p}_j, \Phi^{-j}(\mathbf{p}_0))\} 
    \le &\min 
    \{d_{\alpha,\mathbb{T}^2}(\Phi(\mathbf{p}_j), 0_L), 
    M_2\,  d_{\alpha,\mathbb{T}^2}(\mathbf{p}_j, \Phi^{-j}(\mathbf{p}_0))\} \nonumber\\
    \le &d_{\alpha,\mathbb{T}^2}(\mathbf{p}_{j-1}, \Phi^{-(j-1)}(\mathbf{p}_0)) + c'N_2^{-\rho}. \label{eq:induction-onj}
\end{align}
By \eqref{eq:induction-onj} and induction on $j$, we see that if $c'$ is small enough such that
\begin{equation}\label{eq:2.26 in Dyatlov jin}
    \frac{c'M_2}{M_2-1} \ll  c_1,
\end{equation}
then for all $j = 0, \dots, \tilde{k}$ we have
\[
    d_{\alpha,\mathbb{T}^2}(\mathbf{p}_j, \Phi^{-j}(\mathbf{p}_0)) 
    \le c'N_2^{-\rho} \cdot \frac{1 - M_2^{-j}}{M_2 - 1}
      \le N_2^{-\rho} \cdot \frac{c'}{M_2 - 1}.
\]
This will imply \eqref{eq:2.25 in Dyatlov jin} and complete the proof by choosing $c'$ sufficiently small so that
\begin{equation}\label{eq:2.27 in Dyatlov jin}
    \frac{c'}{M_2-1} \ll c.
\end{equation}
To see this, note that induction also shows that the minimum in \eqref{eq:induction-onj} is attained at the second term, i.e.,
\[
\min\{c_1, M_2\, d_{\alpha,\mathbb{T}^2}(\mathbf{p}_j, \Phi^{-j}(\mathbf{p}_0))\} = M_2\, d_{\alpha,\mathbb{T}^2}(\mathbf{p}_j, \Phi^{-j}(\mathbf{p}_0)).
\]
Otherwise, we would have
\[
c_1 \le c'N_2^{-\rho} \cdot \frac{M_2(1 - M_2^{-j})}{M_2 - 1} \ll \frac{c'M_2}{M_2 - 1} \ll c_1,
\]
a contradiction.

\end{proof}

\subsection{Reduction to FUP}
Fix an integer $k\gg 1$ and a parameter $\rho\in(0,1)$. Let
  \[  \tilde{k} := \lceil \rho k \rceil \in \{1, \dots, k\}.\]
Define
\[
    X_\rho := \{\mathbf{p} \in [0, 1]^2 : d_{\alpha,\mathbb{T}^2}(\mathbf{p}, \Phi^{-\tilde{k}}([0, 1]^2)) \le N_2^{-\rho}\}.
\]
Using the discrete Cantor set $\mathcal{X}_k$ defined by 
\[
    \mathcal{X}_k=\{(a_0+\cdots a_kM_1^{k-1},
        b_0+\cdots b_kM_2^{k-1}):(a_j,b_j)\in\mathcal{A}\}\subset 
        {\mathbb Z_{\mathbf N}},
\]
we also define
\[ \mathcal{X}_\rho := \bigcup_{\mathbf m}\{\mathcal{X}_k + \mathbf m: \mathbf m=(m_1, m_2) \in \mathbb{Z}^2, \max(|m_1|^\alpha,|m_2|) \le 2N_2^{1-\rho}\}
    \subset {\mathbb Z_{\mathbf N}},\]
where addition $+$ is carried out in the group ${\mathbb Z_{\mathbf N}}$. Then the following implication holds:
\begin{equation}\label{eq:2.30 in Dyatlov jin} (l_1,l_2) \in \mathbb Z_{\mathbf N}, 
    \quad (\frac{l_1}{N_1},\frac{l_2}{N_2}) \in 
    X_\rho \implies (l_1,l_2) \in \mathcal{X}_\rho.
\end{equation}
Indeed, we have
\[
    \begin{aligned}
        \Phi^{-\tilde{k}}([0, 1]) &= \bigcup_{\mathbf{j} \in \mathcal{X}_{\tilde{k}}} 
        \left( \frac{j_1}{M_1^{\tilde{k}}}, 
        \frac{j_1+1}{M_1^{\tilde{k}}} \right)\times \left( \frac{j_2}{M_2^{\tilde{k}}}, 
        \frac{j_2+1}{M_2^{\tilde{k}}} \right)\\
        &\subset \bigcup_{\mathbf{j} \in \mathcal{X}_k} 
        \left( \frac{j_1 - M_1^{k-\tilde{k}}}{N_1}, \frac{j_1 + M_1^{k-\tilde{k}}}{N_1} \right)
        \times \left( \frac{j_2 - M_2^{k-\tilde{k}}}{N_2}, \frac{j_2 + M_2^{k-\tilde{k}}}{N_2} \right).
    \end{aligned}
\]
Thus if $(\frac{l_1}{N_1},\frac{l_2}{N_2}) \in X_\rho$, we know 
\[  \frac{l_i}{N_i}\in \left(\frac{j_i-M_i^{k-\tilde{k}}}{N_i}-N_i^{-\rho},\,
 \frac{j_i+M_i^{k-\tilde{k}}}{N_i}+N_i^{-\rho}\right)\qquad i=1,2,
\]
and \eqref{eq:2.30 in Dyatlov jin} follows because $M_2^{k-\tilde{k}} \le N_2^{1-\rho}$ and $M_1^{k-\tilde{k}}=
M_2^{(k-\tilde{k})/\alpha}\leq N_1^{1-\rho}$.

Taking $\varphi \equiv 1$ and $\psi := 1 - \mathds{1}_{X_\rho}$ 
in Proposition \ref{prop:Propagation of singularities for long times} and using \eqref{eq:2.30 in Dyatlov jin},
we obtain
\begin{align}\label{eq:BNXrho}
    (B_\mathbf{N})^{\tilde{k}} &= (B_\mathbf{N})^{\tilde{k}} \mathds{1}_{\mathcal X_\rho} + 
    \mathcal{O}(N_2^{-\infty})_{{\ell_\mathbf N^2} \to {\ell_\mathbf N^2}} \nonumber\\
    (B_\mathbf{N})^{\tilde{k}} &= \mathcal{F}_{\mathbf{N}}^* \mathds{1}_{\mathcal X_\rho} \mathcal{F}_{\mathbf{N}} 
    (B_\mathbf{N})^{\tilde{k}} + 
    \mathcal{O}(N_2^{-\infty})_{{\ell_\mathbf N^2} \to {\ell_\mathbf N^2}},
\end{align}
where the constants in $\mathcal{O}(N_2^{-\infty})$ depend only on $\rho, \chi$.

If the discrete FUP \eqref{eq:dfup} holds with some $\beta>0$ (for $\mathcal B=\mathcal A$),
we consider the normalized eigenfunction $u$ associated with an eigenvalue 
$\lambda\in \operatorname{Sp}(B_\mathbf{N})$. If $|\lambda|\ge M_2^{-\beta}$, then by \eqref{eq:BNXrho}
\[\|u\|_{\ell^2_{\mathbf N}}=\|(B_{\mathbf N}^{\tilde k})(\lambda^{-\tilde k}u)\|_{\ell^2_{\mathbf N}} \le M_2^\beta \lambda^{-\rho k} \|\mathds{1}_{\mathcal X_\rho} \mathcal F_{\mathbf N}^*\mathds{1}_{\mathcal X_\rho} \mathcal F_{\mathbf N} u\|_{\ell^2_{\mathbf N}}+\mathcal O(N_2^{-\infty}).\]
By translation invariance, 
\[\|\mathds{1}_{\mathcal X_\rho} \mathcal F_{\mathbf N}^*\mathds{1}_{\mathcal X_\rho}\|_{\ell^2\to \ell^2}\lesssim N_2^{4(1-\rho)} \,r_k.\]
Then the spectral radius satisfies
\[
\max\{|\lambda|:\lambda\in \operatorname{Sp}(B_\mathbf{N})\} \le \max\bigl(M_2^{-\beta}, C^{1/(\rho k)} M_2^{(2-2\rho-\beta)/\rho}\bigr) \to M_2^{-\beta}
\]
as $k\to\infty$ and $\rho\to 1$.
Hence, we have proved that the discrete FUP implies a spectral gap for the quantum map.

\begin{theorem}
If the discrete FUP \eqref{eq:dfup} holds for alphabets $\mathcal B=\mathcal A$ with some $\beta>0$, then we obtain the spectral gap estimate \eqref{eq:spectralgap} with the same $\beta$ for quantum maps $B_{\mathbf N}$ constructed from $\mathcal A$ by \eqref{eq:BN}. 
\end{theorem}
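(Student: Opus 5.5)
The plan is to formalize the argument sketched just before the statement, following \cite[\S2]{dyatlov2017resonances}. Fix $\rho\in(0,1)$ close to $1$ and set $\tilde k=\lceil\rho k\rceil$. Apply Proposition~\ref{prop:Propagation of singularities for long times} twice.
\begin{itemize}
\item With $\varphi\equiv1$ and $\psi=1-\mathds 1_{X_\rho}$ (after smoothing the indicator slightly so that the support separation \eqref{eq:2.18 in Dyatlov jin} holds with a fixed $c$), we get $B_{\mathbf N}^{\tilde k}=B_{\mathbf N}^{\tilde k}\mathds 1_{\mathcal X_\rho}+\mathcal O(N_2^{-\infty})$, using the implication \eqref{eq:2.30 in Dyatlov jin}.
\item The frequency-side version \eqref{eq:2.20 in Dyatlov jin}, with the roles of $\varphi$ and $\psi$ exchanged, gives $B_{\mathbf N}^{\tilde k}=\mathcal F_{\mathbf N}^*\mathds 1_{\mathcal X_\rho}\mathcal F_{\mathbf N}B_{\mathbf N}^{\tilde k}+\mathcal O(N_2^{-\infty})$. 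This is the usual duality: the adjoint of $B_{\mathbf N}$ is conjugate via $\mathcal F_{\mathbf N}$ to a baker's map of the same type, and its trapped set in frequency is again governed by $\mathcal A$.
\end{itemize}
Combining the two identities yields
\[
B_{\mathbf N}^{2\tilde k}=\mathcal F_{\mathbf N}^*\mathds 1_{\mathcal X_\rho}\mathcal F_{\mathbf N}\,B_{\mathbf N}^{2\tilde k}\,\mathds 1_{\mathcal X_\rho}+\mathcal O(N_2^{-\infty}),
\]
so that $\|B_{\mathbf N}^{2\tilde k}\|\le\|\mathds 1_{\mathcal X_\rho}\mathcal F_{\mathbf N}^*\mathds 1_{\mathcal X_\rho}\|+\mathcal O(N_2^{-\infty})$ up to norm-one factors. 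Alternatively, one can write $B^{2\tilde k}=B^{\tilde k}B^{\tilde k}$, inserting the frequency cutoff on the left factor and the spatial cutoff on the right.

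Next we bound the FUP norm for the thickened sets. By definition $\mathcal X_\rho$ is a union of at most $\#\{\mathbf m:|m_1|\le (2N_2^{1-\rho})^{1/\alpha},\,|m_2|\le 2N_2^{1-\rho}\}\lesssim N_2^{(1-\rho)(1+1/\alpha)}$ translates $\mathcal X_k+\mathbf m$. Translation in physical space and in frequency space intertwines $\mathcal F_{\mathbf N}$ with unitary modulations. Hence each piece $\mathds 1_{\mathcal X_k+\mathbf m}\mathcal F_{\mathbf N}^*\mathds 1_{\mathcal X_k+\mathbf m'}$ has the same norm as $\mathds 1_{\mathcal X_k}\mathcal F^*_{\mathbf N}\mathds 1_{\mathcal X_k}$, which by \eqref{eq:dfup} with $\mathcal B=\mathcal A$ (and using that $\mathcal F^*$ is $\mathcal F$ composed with reflection, which preserves the hypotheses) is $\le CN_2^{-\beta}$. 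The triangle inequality over both unions then gives
\[
\|\mathds 1_{\mathcal X_\rho}\mathcal F_{\mathbf N}^*\mathds 1_{\mathcal X_\rho}\|\lesssim N_2^{2(1-\rho)(1+1/\alpha)-\beta}.
\]
The exact exponent is irrelevant since it tends to $-\beta$ as $\rho\to1$. The reflection point needs a short check: reflection $\mathbf j\mapsto-\mathbf j$ maps $\mathcal X_k$ to a Cantor set built from a reflected alphabet, up to a translation, and this reflected alphabet has no full rows or columns exactly when $\mathcal A$ has none.

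For the eigenvalue argument, let $B_{\mathbf N}u=\lambda u$ with $\|u\|=1$. Then
\[
|\lambda|^{2\tilde k}=\|B_{\mathbf N}^{2\tilde k}u\|\le CN_2^{\gamma(1-\rho)-\beta}+\mathcal O(N_2^{-\infty}).
\]
Since $N_2=M_2^k$ and $2\tilde k\ge 2\rho k$, this gives $|\lambda|\le \bigl(C M_2^{k(\gamma(1-\rho)-\beta)}\bigr)^{1/(2\rho k)}+o(1)$. Letting $k\to\infty$ yields $\limsup\max|\lambda|\le M_2^{(\gamma(1-\rho)-\beta)/(2\rho)}$, and then $\rho\to1$ gives a bound $M_2^{-\beta/2}$. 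The $2\tilde k$ bookkeeping here must be matched to the paper's normalization: to get the stated exponent $-\beta$, one should instead use only $\tilde k\approx k$ total steps, splitting them as $\tilde k/2+\tilde k/2$ with both propagations valid up to the full Ehrenfest time $k$. This is what \eqref{eq:BNXrho} implicitly does, since $\rho k$ iterations already localize to $\mathcal X_\rho$ at scale $N^{-\rho}$ in both variables.

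The main obstacle is the frequency-side propagation \eqref{eq:2.20 in Dyatlov jin} with the right sets. One must verify that the dual dynamics $\mathcal F_{\mathbf N}B_{\mathbf N}\mathcal F_{\mathbf N}^*$ contracts toward $\mathcal X_\rho$ with the same alphabet. The paper delegates this to \cite[Proposition 2.3]{dyatlov2017resonances}, so I would carefully redo that conjugation in 2D. I would also confirm that the anisotropic $\alpha$-distance neighbourhoods correspond to the box thickening $\max(|m_1|^\alpha,|m_2|)\le2N_2^{1-\rho}$ used in $\mathcal X_\rho$.
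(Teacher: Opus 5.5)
There is a genuine gap, and it sits where you flag it. The argument you carry out proves only $\limsup_{N\to\infty}\max|\lambda|\le M_2^{-\beta/2}$, and the repair you propose does not work.

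Your bound comes from estimating $\|B_{\mathbf N}^{2\tilde k}u\|$ by the operator norm of
\[
B_{\mathbf N}^{2\tilde k}=B_{\mathbf N}^{\tilde k}\,\mathds{1}_{\mathcal X_\rho}\mathcal F_{\mathbf N}^*\mathds{1}_{\mathcal X_\rho}\mathcal F_{\mathbf N}\,B_{\mathbf N}^{\tilde k}+\mathcal O(N_2^{-\infty}).
\]
This, and not your first displayed identity, is the factorization in which the two cutoffs are adjacent: the spatial cutoff acts on the input of the left factor, and the frequency cutoff on the output of the right factor. It charges $2\tilde k\approx 2\rho k$ powers of $|\lambda|$ against a single factor $N_2^{-\beta}$.

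Splitting $\tilde k$ as $\tilde k/2+\tilde k/2$ makes things worse. Only $\tilde k/2$ iterations localize to $\Phi^{-\tilde k/2}([0,1]^2)$, a union of rectangles of size about $N_1^{-\rho/2}\times N_2^{-\rho/2}$. With the translation argument, covering the corresponding subset of $\mathbb Z_{\mathbf N}$ by translates of $\mathcal X_k$ costs about $N_2^{(1-\rho/2)(1+1/\alpha)}$ translates on each side. That swamps $N_2^{-\beta}$, because $\beta\le\frac12(1+\frac1\alpha)$ always holds (every entry of $\mathcal F_{\mathbf N}$ has modulus $(N_1N_2)^{-1/2}$). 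Localization at scale $N_2^{-\rho}$ really does need $\tilde k$ iterations on each side.

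The missing idea is to use the eigenvalue equation on one factor instead of the bound $\|B_{\mathbf N}^{\tilde k}\|\le 1$. Then only $\tilde k$ powers of $|\lambda|$ enter the main term. Let $B_{\mathbf N}u=\lambda u$ with $\|u\|=1$ and $|\lambda|\ge M_2^{-\beta}$.

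Apply the second identity of \eqref{eq:BNXrho} to $u=\lambda^{-\tilde k}B_{\mathbf N}^{\tilde k}u$. This shows that $u$ itself is frequency localized: $u=\mathcal F_{\mathbf N}^*\mathds{1}_{\mathcal X_\rho}\mathcal F_{\mathbf N}u+|\lambda|^{-\tilde k}\mathcal O(N_2^{-\infty})$. The first identity gives $1\le|\lambda|^{-\tilde k}\|\mathds{1}_{\mathcal X_\rho}u\|+|\lambda|^{-\tilde k}\mathcal O(N_2^{-\infty})$. Together,
\[
1\le|\lambda|^{-\tilde k}\,\|\mathds{1}_{\mathcal X_\rho}\mathcal F_{\mathbf N}^*\mathds{1}_{\mathcal X_\rho}\|+|\lambda|^{-2\tilde k}\,\mathcal O(N_2^{-\infty}).
\]
The error is still $\mathcal O(N_2^{-\infty})$, because $|\lambda|^{-2\tilde k}\le M_2^{2\beta(\rho k+1)}$ is polynomial in $N_2$. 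In your factorization this is just $\lambda^{2\tilde k}u=\lambda^{\tilde k}B_{\mathbf N}^{\tilde k}\mathds{1}_{\mathcal X_\rho}\mathcal F_{\mathbf N}^*\mathds{1}_{\mathcal X_\rho}\mathcal F_{\mathbf N}u+\mathcal O(N_2^{-\infty})$.

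With your translate count, which is correct, this gives $|\lambda|^{\tilde k}\lesssim N_2^{2(1-\rho)(1+1/\alpha)-\beta}$. Hence $\limsup\max|\lambda|\le\max\bigl(M_2^{-\beta},M_2^{-(\beta-2(1-\rho)(1+1/\alpha))/\rho}\bigr)$, and letting $\rho\to1$ gives $M_2^{-\beta}$. This is the paper's argument, following Dyatlov--Jin.

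Finally, the reflection detour is unnecessary. Taking adjoints gives $\|\mathds{1}_{\mathcal X_k}\mathcal F_{\mathbf N}^*\mathds{1}_{\mathcal X_k}\|=\|\mathds{1}_{\mathcal X_k}\mathcal F_{\mathbf N}\mathds{1}_{\mathcal X_k}\|$ directly. Writing $\mathcal F_{\mathbf N}^*$ as $\mathcal F_{\mathbf N}$ composed with $\mathbf j\mapsto-\mathbf j$ instead produces the mixed pair $(\mathcal X_k,-\mathcal X_k)$, which is not literally the hypothesis.
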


With this reduction principle, once we have Theorem \ref{thm:DFUP}, we also obtain Theorem \ref{thm:spectralgap}. In the following sections, we focus on anisotropic FUPs.

\section{Anisotropic discrete FUP}\label{sec:dfup}
We aim to prove Theorem~\ref{thm:DFUP}. 
We recall the anisotropic discrete Fourier transform defined by \eqref{eq:DFT} as
\[
\mathcal{F}_{\mathbf N}u(\xi_1,\xi_2)=\frac{1}{\sqrt{N_1N_2}}\sum_{a=0}^{N_1-1}\sum_{b=0}^{N_2-1}e^{-2\pi i(a\xi_1/N_1+b\xi_2/N_2)}u(a,b)~:~\ell^2_{\mathbf N}\to\ell^2_{\mathbf N}.
\] In this section, we also denote $\mathcal F_{\mathbf N}f$ by $\hat f$ for discrete functions.

Our main objective is to determine under which conditions on the alphabets $\mathcal{A},\mathcal{B}$ \eqref{eq:dfup} i.e. the following FUP estimate holds:
\[
\|\mathds{1}_{\mathcal{Y}_k}\mathcal{F}_{\mathbf N}\mathds{1}_{\mathcal{X}_k}\|_{{\ell^2_{\mathbf N}}\to{\ell^2_{\mathbf N}}}\leq C N_2^{-\beta}.
\]
This exhibits exponential decay in the iteration parameter $k$.

The self-similarity of the discrete Cantor sets implies the submultiplicativity of the FUP norm (\cite[Section~5.1]{Cohen}, see 
also Appendix \ref{app:submultiplicativity} for a detailed proof):
\begin{equation}\label{eq:submultiplicativity}
\|\mathds{1}_{\mathcal{Y}_{k_1+k_2}}\mathcal{F}_{\mathbf N}\mathds{1}_{\mathcal{X}_{k_1+k_2}}\|_{{\ell^2_{\mathbf N}}\to{\ell^2_{\mathbf N}}}
\le
\|\mathds{1}_{\mathcal{Y}_{k_1}}\mathcal{F}_{\mathbf N}\mathds{1}_{\mathcal{X}_{k_1}}\|_{{\ell^2_{\mathbf N}}\to{\ell^2_{\mathbf N}}}\,
\|\mathds{1}_{\mathcal{Y}_{k_2}}\mathcal{F}_{\mathbf N}\mathds{1}_{\mathcal{X}_{k_2}}\|_{{\ell^2_{\mathbf N}}\to{\ell^2_{\mathbf N}}}.
\end{equation}
 This reduces the problem to the existence of some $k_0$ such that
\[
\|\mathds{1}_{\mathcal{Y}_{k_0}}\mathcal{F}_{\mathbf N}\mathds{1}_{\mathcal{X}_{k_0}}\|_{{\ell^2_{\mathbf N}}\to{\ell^2_{\mathbf N}}}\le \lambda<1.
\]
Finally, it suffices to show that for some sufficiently large $k$ there exists no $f$ supported on $\mathcal{X}_k$ with $\operatorname{supp}\hat f\subset\mathcal{Y}_k$. Our argument is a modification of the argument in \cite{Cohen}.

\subsection{Cyclotomic points on algebraic curves} 
A classical theme in arithmetic geometry and Diophantine analysis is the study of algebraic curves that contain many points (finite or infinite) whose coordinates are roots of unity. Such points are called \emph{cyclotomic points}. The simplest examples are given by binomial equations of the form \(z^a w^b = \zeta\) or \(z^a = \zeta w^b\), where \(\zeta\) is a root of unity and \(a,b\) are coprime positive integers. Each such equation defines an algebraic curve that is isomorphic to the multiplicative group \(\mathbb{G}_m\) and contains a Zariski-dense set of cyclotomic points.

For a general polynomial \(F(z,w) \in \mathbb{C}[z,w]\), the set of cyclotomic points on the curve \(F(z,w)=0\) is typically finite. Bounding the number of such points in terms of the degree of \(F\) is a problem of combinatorial and arithmetic flavour.

The following fundamental theorem from number theory, due to Ruppert \cite[Corollary 5]{ruppert1993solving} 
and refined by Beukers and Smyth \cite[Theorem 4.1]{beukers2002cyclotomic}, provides a sharp quantitative and structural result about Lang's conjecture for polynomials with degree at most \(D\) in \(w\) and degree at most \(\lfloor D^{1/\alpha}\rfloor\) in \(z\).  See \cite[Theorem 5]{Cohen} for a modified isotropic version.

\begin{theorem}[Cyclotomic points]\label{thm:cyclotomic}
Let
 \[   F(z,w) = \sum_{0\le k\le \lfloor D^{1/\alpha}\rfloor,\;0\le l\le D} a_{kl}z^kw^l\]
be a polynomial in $\mathbb{C}[z,w]$ with degree at most $D\geq 1$ in $w$ 
and degree at most $\lfloor D^{1/\alpha}\rfloor$ in $z$. 
Then $F$ has either at most $30\lfloor D^{1+1/\alpha} \rfloor$ 
cyclotomic points, or infinitely many. In the latter case $F$ has an irreducible factor of the form
\begin{equation}\label{eq:Polynomial cutting out a line}
    z^a w^b - \zeta \quad \text{or} \quad z^a - \zeta w^b
\end{equation}
for some root of unity $\zeta$ and coprime integers $a,b$.
\end{theorem}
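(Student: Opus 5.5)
The plan is to derive the statement directly from the classical results of Ruppert and Beukers--Smyth rather than reprove them. Beukers--Smyth \cite[Theorem 4.1]{beukers2002cyclotomic} show the following. If $F\in\mathbb C[z,w]$ has no factor of the form \eqref{eq:Polynomial cutting out a line}, then the number of cyclotomic points on $F=0$ (counted on the affine curve, with both coordinates roots of unity) is at most $22\,V(F)$. Here $V(F)$ is the area of the Newton polygon of $F$. Ruppert's corollary gives a bound of the same shape in terms of the bidegree $(\deg_z F,\deg_w F)$. So the proof has two parts: a dichotomy argument, which is purely structural, and a size estimate for the Newton polygon under our anisotropic degree constraints.

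\emph{Step 1 (dichotomy).} Suppose $F$ has infinitely many cyclotomic points. Factor $F$ into irreducibles. Some irreducible factor $G$ must then contain infinitely many of these points, since there are only finitely many factors. By the Ihara--Serre--Tate theorem (Lang's conjecture for $\mathbb G_m^2$), the Zariski closure of the torsion points on $G=0$ is a finite union of torsion cosets of subtori. The curve $G=0$ is irreducible and contains infinitely many torsion points, so it must itself be a translate of a one-dimensional subtorus by a torsion point. Such a translate is cut out by $z^aw^b=\zeta$ or $z^a=\zeta w^b$, with $\gcd(a,b)=1$ and $\zeta$ a root of unity. We also need to exclude the degenerate factors $z$ or $w$, since these have no cyclotomic points. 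After that, $G$ is (up to a constant) exactly one of the binomials in \eqref{eq:Polynomial cutting out a line}. This gives the ``latter case'' of the theorem.

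\emph{Step 2 (counting).} In the remaining case no irreducible factor of $F$ is of the form \eqref{eq:Polynomial cutting out a line}. Beukers--Smyth then bounds the number of cyclotomic points by $22\,V(F)$. The Newton polygon of $F$ lies inside the rectangle $[0,\lfloor D^{1/\alpha}\rfloor]\times[0,D]$, so
\[
V(F)\le \lfloor D^{1/\alpha}\rfloor\, D\le D^{1+1/\alpha}.
\]
This yields at most $22D^{1+1/\alpha}$ points. It remains to compare this with $30\lfloor D^{1+1/\alpha}\rfloor$. For $D\ge1$ we have $\lfloor x\rfloor\ge x/2$ whenever $x\ge1$, so $22x\le 44\lfloor x\rfloor$. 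This is not quite $30$, so a sharper estimate is needed here. One option is to use the more precise form of the bound in Beukers--Smyth. Another is to use $V(F)\le \lfloor D^{1/\alpha}\rfloor D$ directly: both factors are integers, so their product is at most $\lfloor D^{1+1/\alpha}\rfloor$. With the second option the count is at most $22\lfloor D^{1+1/\alpha}\rfloor\le30\lfloor D^{1+1/\alpha}\rfloor$, as required.

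The main obstacle is bookkeeping in Step 2. We must check which normalization of the Beukers--Smyth constant applies: whether points on the coordinate axes are counted, and whether multiplicities are counted. We also need to handle polynomials divisible by powers of $z$ or $w$, which should be divided out first, since this only shrinks the Newton polygon. Finally, we must ensure that the integrality trick gives the floor exactly. Step 1 is standard once Lang's theorem is invoked. Alternatively, it can be read off from Ruppert's Corollary 5, as in the isotropic version \cite[Theorem 5]{Cohen}; our proof would just replace Cohen's square bidegree $D\times D$ with the rectangle $\lfloor D^{1/\alpha}\rfloor\times D$.
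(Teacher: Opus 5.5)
Your proposal is correct and is essentially the paper's argument: the paper simply cites Ruppert and Beukers--Smyth (the $22\,V(F)$ bound) and notes that the Newton polygon lies in the rectangle $[0,\lfloor D^{1/\alpha}\rfloor]\times[0,D]$, which is exactly your Step 2. Your Ihara--Serre--Tate Step 1 is valid but redundant, since the contrapositive of Beukers--Smyth already forces a binomial factor when there are infinitely many cyclotomic points; for the integrality trick, note that $D=\deg_\alpha F$ need not be an integer in the application, so use $\lfloor D\rfloor$ as the $w$-degree to get $V(F)\le\lfloor D^{1/\alpha}\rfloor\lfloor D\rfloor\le\lfloor D^{1+1/\alpha}\rfloor$.
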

 
 \begin{remark}
The term {cyclotomic point} means a point $(z,w)\in \mathbb{C}^2$ such that $F(z,w)=0$ and both $z$ and $w$ are roots of unity.
The algebraic curve defined by $z^a w^b = \zeta$ (or $z^a = \zeta w^b$) is irreducible over $\mathbb{C}$ if and only if $a$ and $b$ are coprime, in which case the curve contains infinitely many cyclotomic points.
For any polynomial that does not have such an irreducible factor, the curve $F(z,w)=0$ has only finitely many cyclotomic points, and their number is bounded by $30\lfloor D^{1+1/\alpha}\rfloor$.
\end{remark}

\begin{remark}
    In the original statement of \cite[Theorem 4.1]{beukers2002cyclotomic}, the 
    upper bound on the number of cyclotomic points is given by $22$ times 
    the area of the \emph{Newton polytope} of $F$. In our case, the area of the Newton polytope of $F$ 
    is bounded by $\lfloor D^{1/\alpha}\rfloor D$.
\end{remark}

Recall that we can embed $\mathbb{T}^2$ into $\mathbb{C}^{2}$ via
\[
(x,y) \mapsto (e^{2\pi ix}, e^{2\pi iy}).
\]
The cyclotomic points in $\mathbb{C}^{2}$ associated to any polynomial are in the image of $(\mathbb{Q}/\mathbb{Z})^2$. For a polynomial $F(z,w)$, we define
\begin{align*}
Z(F) &= \{(x,y) \in \mathbb{T}^2 : F(e^{2\pi ix}, e^{2\pi iy}) = 0\}, \\
Z_{\mathbf N}(F) &= \{(x,y) \in {\mathbb Z_{\mathbf N}} : F(e^{\frac{2\pi i}{N_1}x}, e^{\frac{2\pi i}{N_2}y}) = 0\}. 
\end{align*}
If we view ${\mathbb Z_{\mathbf N}}$ as the subgroup of $\mathbb{T}^2$ via the identification 
\[
{\mathbb Z_{\mathbf N}} \cong \mathbb{T}_{\mathbf N}= \left\{\left(\frac{x}{N_1}, \frac{y}{N_2}\right) \in \mathbb{T}^2 \mid x,y \in \mathbb{Z} \right\},
\]
then $Z_{\mathbf N}(F) = Z(F) \cap \mathbb{T}_{\mathbf N}$. 

We say that a polynomial $F$ of the form \eqref{eq:Polynomial cutting out a line} \emph{cuts out a line}, because
\[
Z(F) = \{(x,y) \in \mathbb{T}^2 : ax\pm by = c\},
\]
with $a,b \ge 0$ coprime integers and $c \in \mathbb{Q}$, where $\zeta = e^{2\pi i c}$. 
This line is \emph{irreducible} since $a$ and $b$ are coprime (see Definition \ref{def:ir-line}). 


\subsection{Line support lemma}

In harmonic analysis and additive combinatorics over finite 
abelian groups, a fundamental theme is the trade-off between 
the support of a function and its Fourier support. 
In the study of uncertainty principles and applications 
such as detecting arithmetic progressions or linear structures, 
one often needs to decompose the support of a function 
along particular directions. To state the key line support lemma, 
we first introduce the notion of an {irreducible line} 
in $\mathbb Z_{\mathbf N}$. This notion is motivated by 
$Z_{\mathbf{N}}(F)$ when $F$ is a polynomial that cuts out a line.

\begin{definition}\label{def:ir-line}
    An \emph{irreducible line} $\ell$ in $\mathbb Z_{\mathbf N}$ is a non-empty set of the form    
    \begin{equation}\label{eq:line}
        \ell=\{(x,y)\in \mathbb Z_{\mathbf N} : \frac{ax}{N_1}+\frac{by}{N_2} \equiv c \pmod{1}\}
    \end{equation}
    for coprime pair $(a,b)\in \mathbb Z_{\mathbf N}$  and $c\in \mathbb{Q}/\mathbb Z$ (allowing $a=0$ or $b=0$, with the understanding that the other non-zero one is $\pm1$). 
    We will use the \emph{$\alpha$-norm} $\|\ell\|_\alpha$ of the line $\ell$ to denote the \emph{minimal} number $R$
    such that $\ell$ can be written as 
 \eqref{eq:line}   with $|a|^\alpha,|b|\le R$. 
\end{definition}

\begin{remark}
    This $\alpha$-norm characterizes the size of the dual normal vector of the irreducible lines. We note that when $\ell$ is either vertical or horizontal, $\|\ell\|_\alpha=1$. See Appendix \ref{app:irreducible-lines} for a description of the structure of \emph{lines} in $\mathbb Z_{\mathbf N}$. 
\end{remark}

Let $A\subset \mathbb Z_{\mathbf N}$. We define the (discrete  and half) ~\emph{$(R,\alpha)$-neighborhood} of $A$ in $\mathbb Z_{\mathbf N}$ by
\begin{equation}\label{eq:nb-R}
    \mathrm{N}_R^\alpha(A)=A+[0,\lfloor R^{1/\alpha} \rfloor)\times [0,R).
\end{equation}

\begin{lemma}[Line support]\label{lem:Cohen-Lemma11}
Let $f : {\mathbb Z_{\mathbf N}} \to \mathbb{C}$ be a nonzero function with $\operatorname{supp} f = S$. 
Let $R = \lfloor C_\alpha|S|^{\alpha/(1+\alpha)} \rfloor$, where $C_\alpha$ is a constant from Lemma \ref{lem:polysupp}.  
Then there exists an irreducible line $\ell$ with $\|\ell\|_\alpha \le R$ and a 
nonzero function $g$ with $\operatorname{supp} g \subset S \cap \ell$ 
and $\operatorname{supp} \hat{g} \subset \mathrm{N}_R^\alpha(\operatorname{supp} \hat{f})$.
\end{lemma}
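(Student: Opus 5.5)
We follow Cohen's isotropic argument, replacing the square degree box by the anisotropic box $[0,\lfloor R^{1/\alpha}\rfloor)\times[0,R)$ that matches Theorem~\ref{thm:cyclotomic}.

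\emph{Step 1: a vanishing polynomial.} We look for a polynomial
\[
P(z,w)=\sum_{0\le k<\lfloor R^{1/\alpha}\rfloor,\ 0\le l<R} c_{kl}z^kw^l
\]
that vanishes at every point $(e^{2\pi i x/N_1},e^{2\pi i y/N_2})$ with $(x,y)\in S$. This is $|S|$ linear conditions on roughly $R^{1+1/\alpha}$ unknowns. Choosing $C_\alpha$ (via Lemma~\ref{lem:polysupp}) so that $R^{1+1/\alpha}>|S|$, a nonzero such $P$ exists. More precisely, Lemma~\ref{lem:polysupp} should supply the statement needed in Step 3: for any subset $T\subset\mathbb Z_{\mathbf N}$ with $|T|$ below the cyclotomic threshold, a nonzero $f$ on $T$ cannot have $\hat f$ vanish on enough translates to give a contradiction.

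\emph{Step 2: dual interpretation.} Identify $P$ with a function $p$ on $\mathbb Z_{\mathbf N}$ supported in $[0,\lfloor R^{1/\alpha}\rfloor)\times[0,R)$. Then $\check p$, up to conjugation and normalisation, is the evaluation of $P$ at the torus points, and it vanishes on $S$. Now consider
\[
V=\{g:\operatorname{supp} g\subset S,\ \operatorname{supp}\hat g\subset \mathrm N^\alpha_R(\operatorname{supp}\hat f)\}.
\]
The key linear-algebra observation is that, for a trigonometric polynomial $q$ with frequency support in the box, the product $q\cdot f$ satisfies $\operatorname{supp}\widehat{qf}\subset \mathrm N^\alpha_R(\operatorname{supp}\hat f)$, since convolution with a box-supported kernel lands in the half-neighbourhood. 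Also $\operatorname{supp}(qf)\subset S\cap\{q\ne0\}$.

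So we should choose $q$ (box-supported) so that $qf$ is nonzero but supported on $S\cap\ell$. The natural plan is to take $Q$ of minimal "size" vanishing on most of $S$. Factor $P$ and apply Theorem~\ref{thm:cyclotomic} with $D\approx R$. If $P$ has no line factor, its zero set contains at most $30R^{1+1/\alpha}$ cyclotomic points. Dividing $\mathbb Z_{\mathbf N}$-points by this bound, together with a careful choice of the constant, should force a contradiction with $P$ vanishing on all of $S$, where the relevant points are those of $S$ where $f\neq0$.

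The cleaner route is as follows. Among all nonzero box polynomials $Q$, pick $Q$ such that $Qf$ (pointwise) is nonzero with minimal support $S'$; the product $Qf$ is still in the admissible class. Then show that $S'$ lies on a line. Otherwise one can find another box polynomial $Q'$ vanishing on enough points of $S'$ to reduce the support further, using the dimension count again. This yields a contradiction unless the zero set of the vanishing polynomial contains infinitely many cyclotomic points. In that case Theorem~\ref{thm:cyclotomic} gives a factor $z^aw^b-\zeta$ or $z^a-\zeta w^b$ with $a\le R^{1/\alpha}$ and $b\le R$, i.e.\ an irreducible line $\ell$ with $\|\ell\|_\alpha\le R$. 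Multiplying $Qf$ by the cofactor, we get a nonzero function supported on $S\cap\ell$.

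\emph{Main obstacle.} The hard part is the bookkeeping in the minimal-support argument. We must ensure that the final multiplier stays within the degree box $[0,\lfloor R^{1/\alpha}\rfloor)\times[0,R)$, so that the frequency support lands in $\mathrm N^\alpha_R$ and the line's $\alpha$-norm is at most $R$. We must also ensure that the cyclotomic count $30R^{1+1/\alpha}$ is beaten by the number of points used; this is what fixes the exponent $\alpha/(1+\alpha)$ and the constant $C_\alpha$. We would follow \cite[Lemma 11]{Cohen} step by step, replacing total degree with the bidegree $(R^{1/\alpha},R)$.
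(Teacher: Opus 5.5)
Your reduction step is right, and it is exactly how the paper finishes. If $q(x,y)=F^*(e^{2\pi i x/N_1},e^{2\pi i y/N_2})$ with $F^*$ having exponents in $[0,\lfloor R^{1/\alpha}\rfloor)\times[0,R)$, then $g=qf$ satisfies $\operatorname{supp}\hat g\subset \mathrm N_R^\alpha(\operatorname{supp}\hat f)$ and $\operatorname{supp} g=S\setminus Z_{\mathbf N}(F^*)$. So everything rests on producing $F^*$ with $\deg_\alpha F^*\lesssim|S|^{\alpha/(1+\alpha)}$ such that $S\setminus Z_{\mathbf N}(F^*)$ is nonempty and lies on a single irreducible line of $\alpha$-norm at most $R$. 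That is precisely Lemma~\ref{lem:polysupp}, not the statement you guess for it, and your proposal does not prove it. Three steps fail.
\begin{itemize}
\item The cyclotomic count cannot give a contradiction. You chose the box so that $R^{1+1/\alpha}>|S|$, so $|S|$ is always below the threshold $30R^{1+1/\alpha}$ of Theorem~\ref{thm:cyclotomic}, and nothing forces a line factor.
\item The ``minimal support among box polynomials'' argument does not close. Cutting $S'$ further with another box polynomial $Q'$ means using $QQ'$, whose $\alpha$-degree can be twice the box size. That contradicts nothing about the minimality of $S'$ within the box class. Iterating needs a degree budget that stays summable over the iterations, which you never set up.
\item If a line factor $P=LK$ does appear, nothing prevents $K$ from vanishing on the whole current support, so $KQf$ may be $0$. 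Also, most of $S$ may sit on the non-line factors.
\end{itemize}

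The missing idea is a strong induction on $|S|$ carrying the budget $\deg_\alpha F^*\le C_\alpha|S|^{\alpha/(1+\alpha)}-1$. The budget is spent additively via $\deg_\alpha(FG)\le\deg_\alpha F+\deg_\alpha G$ along geometrically shrinking sets. Take $F$ of minimal $\alpha$-degree $D\le|S|^{\alpha/(1+\alpha)}$ vanishing on $S$, and among those one with the fewest irreducible factors.
\begin{itemize}
\item If $F$ is irreducible and cuts out a line $\ell$, then $S\subset\ell$, $\|\ell\|_\alpha\le D$, and $F^*=1$ works.
\item If $F$ is irreducible but not a line, Theorem~\ref{thm:cyclotomic} is used not for a contradiction but to get $|S|\le 30D^{1+1/\alpha}$. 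Then a polynomial $G$ of $\alpha$-degree $D-1$ can be made to vanish on at least $|S|/100$ points of $S$, but not on all of $S$ by minimality of $D$. One recurses on the remaining at most $0.99|S|$ points to get $H$ and sets $F^*=GH$.
\item If $F=GH$ is reducible with $|S\cap Z_{\mathbf N}(G)|\le|S\cap Z_{\mathbf N}(H)|$, the fewest-factors choice gives $T=S\setminus Z_{\mathbf N}(H)\ne\emptyset$ with $|T|\le|S|/2$. One recurses on $T$ to get $P$ and sets $F^*=HP$.
\end{itemize}
The factors $0.99$ and $1/2$ let the extra degree ($D-1$, resp.\ $\deg_\alpha H\le|S|^{\alpha/(1+\alpha)}$) be absorbed into $C_\alpha|S|^{\alpha/(1+\alpha)}$ for $C_\alpha$ large; small $|S|$ is handled directly with vertical lines. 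The minimality choices are what keep the surviving set nonempty at each step. Your minimal-support formulation provides neither of these.
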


\begin{remark}
This lemma provides a line support structure on $\mathbb Z_{\mathbf N}$ using Theorem \ref{thm:cyclotomic}. It asserts that for any nonzero function \(f\) with physical support \(S\), there exists an irreducible line \(\ell\) with \(\|\ell\|_{\alpha} \le R\lesssim |S|^{\alpha/(1+\alpha)}\) such that one can construct a nonzero function \(g\) whose physical support lies in \(S \cap \ell\) and whose Fourier support lies in an \(R\)-neighborhood of \(\operatorname{supp} \hat f\).
In other words, one can always find a low-complexity direction along which the restricted component of \(f\) remains nontrivial in the position domain while not spreading too far in the frequency domain. This property is crucial in density increment arguments in additive combinatorics, where it is used to locate linear structures and eventually derive the full linearity or periodicity of a set. Here this line support structure will be used to prove an anisotropic discrete FUP.
\end{remark}

\begin{remark}
Theorem \ref{thm:cyclotomic} is related to this key line support lemma, because any function $g : \mathbb Z_{\mathbf N} \to \mathbb C$ with $\operatorname{supp} \hat{g} \subset [0, \lfloor D^{1/\alpha}\rfloor] \times [0, D]$ can be expressed as a polynomial evaluated at roots of unity:
\[
    g(x,y) = \frac{1}{\sqrt{N_1N_2}} \sum_{0\le k\le \lfloor D^{1/\alpha}\rfloor,\, 0\le l\le D} 
    \hat{g}(k,l) z^k w^l, \qquad z = e^{\frac{2\pi i}{N_1}x},\, w = e^{\frac{2\pi i}{N_2}y}.
\]
The bound of the cyclotomic points contributes when the polynomial does not cut out a line; see \emph{Case 3} in the proof of Lemma \ref{lem:polysupp}.\end{remark}

We define the \emph{$\alpha$-degree} of a polynomial $F$ as
\begin{equation}\label{eq:alpha-deg}
    \deg_\alpha F = \max_{a_{kl}\neq 0} \max(|k|^\alpha, |l|), \qquad \text{where}\quad F(z,w) = \sum_{k,l} a_{kl}z^kw^l.
\end{equation}
By the inequality $(a+b)^\alpha \le a^\alpha + b^\alpha$ (which holds for $\alpha<1$), we have 
\begin{equation}\label{eq:relation-deg-poly}
\max\left(\deg_\alpha F,\, \deg_\alpha G\right)\le \deg_\alpha(FG)\le \deg_\alpha F + \deg_\alpha G.
\end{equation}

To prove Lemma \ref{lem:Cohen-Lemma11}, it suffices to prove the following lemma concerning line-supported polynomials, which modifies \cite[Lemma 20]{Cohen}.

Indeed, if Lemma \ref{lem:polysupp} holds, then for $f$, $S$, $R$ as in Lemma \ref{lem:Cohen-Lemma11}, there exists a polynomial $F^*$ with coefficients $a_{kl}$ such that $\deg_{\alpha} F^*\le C_\alpha|S|^{\alpha/(1+\alpha)}-1\le R$ and $S\setminus Z_{\mathbf N}(F^*)\neq\emptyset$ lies on an irreducible line $\ell$ satisfying $\|\ell\|_\alpha\le |S|^{\alpha/(1+\alpha)}\le R$.
Then the function 
\[
g=\frac1{\sqrt{N_1N_2}}\, F^*(e^{\frac{2\pi i}{N_1}x},\, e^{\frac{2\pi i}{N_2}y})\cdot f
\]
is nonzero and supported on $\ell\cap S$ with 
\[\operatorname{supp} \hat g \subset [0,\lfloor R^{1/\alpha} \rfloor)\times [0,R) + \operatorname{supp} \hat f \subset \mathrm{N}_R^\alpha(\operatorname{supp} \hat{f}).\]

\begin{lemma}[Polynomial version]\label{lem:polysupp}
 For all $\alpha\in(0,1)$ there exists a constant $C_\alpha \gg 1$, depending only on $\alpha$, such that for all sufficiently large $\mathbf N$ and every nonempty set $S\subset \mathbb Z_{\mathbf N}$, 
there exists a polynomial $F^*$ with $\deg_{\alpha} F^*\leq C_\alpha|S|^{\alpha/(1+\alpha)} - 1$ such that 
    $S \setminus Z_{\mathbf N}(F^*)$ is nonempty
    and lies on an irreducible line $\ell$ with $\|\ell\|_\alpha \le |S|^{\alpha/(1+\alpha)}$.
\end{lemma}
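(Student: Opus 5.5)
We prove Lemma~\ref{lem:polysupp} by an iterative (greedy) construction in the spirit of \cite[Lemma 20]{Cohen}. Set $s=|S|$ and $D=\lfloor c_\alpha s^{\alpha/(1+\alpha)}\rfloor$ for a suitable constant $c_\alpha$. The polynomials with $\deg_\alpha\le D$, i.e.\ with monomials $z^kw^l$ for $0\le k\le\lfloor D^{1/\alpha}\rfloor$ and $0\le l\le D$, form a space of dimension $\gtrsim D^{1+1/\alpha}$. Choosing $c_\alpha$ large, this dimension exceeds $s$, so by linear algebra some nonzero $F_0$ with $\deg_\alpha F_0\le D$ vanishes on all of $S$ (viewed in $\mathbb T_{\mathbf N}$). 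The strategy is to start from $F_0$ and strip off factors until exactly the right amount of $S$ is left uncovered.

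\emph{Step 1 (factorization).} Factor $F_0=\prod_i P_i$ into irreducibles. By \eqref{eq:relation-deg-poly} each factor has $\deg_\alpha P_i\le D$, and the $\alpha$-degrees of the factors are subadditive.

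\emph{Step 2 (minimality).} Among all divisors $G$ of $F_0$ with $S\not\subset Z_{\mathbf N}(G)$, choose one that is maximal under divisibility; $G=1$ qualifies since $S\neq\emptyset$. Then $F_0=G\cdot P\cdot H$ for some irreducible $P$ such that $G P$ vanishes on $S$. Consequently $S\setminus Z_{\mathbf N}(G)$ is nonempty and contained in $Z_{\mathbf N}(P)$. Set $F^*=G$, so that $\deg_\alpha F^*\le D\le C_\alpha s^{\alpha/(1+\alpha)}-1$.

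\emph{Step 3 (case analysis on $P$).}

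\emph{Case 1:} $P$ cuts out a line, $P=z^aw^b-\zeta$ or $z^a-\zeta w^b$. Then $Z_{\mathbf N}(P)$ lies on an irreducible line with normal $(a,b)$. The bounds $a\le D^{1/\alpha}$ and $b\le D$ give $\|\ell\|_\alpha\le D$. To get the sharper bound $\|\ell\|_\alpha\le s^{\alpha/(1+\alpha)}$, take $c_\alpha\le 1$ in this direction, or rescale. If the two constraints conflict, split into two regimes for the constant: first pick the space dimension using $D$, then use $\deg_\alpha F^*\le C_\alpha s^{\alpha/(1+\alpha)}$ with $C_\alpha$ large, keeping $c_\alpha$ so that $D\le s^{\alpha/(1+\alpha)}$. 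This requires $\dim\ge s$ at $D=s^{\alpha/(1+\alpha)}$, which holds since $(D+1)(D^{1/\alpha}+1)>D^{1+1/\alpha}=s$.

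\emph{Case 2:} $P$ is a monomial factor. This cannot happen, since monomials have no zeros on the torus.

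\emph{Case 3:} $P$ does not cut out a line. By Theorem~\ref{thm:cyclotomic}, $Z(P)$ contains at most $30 D^{1+1/\alpha}$ cyclotomic points. This case must be ruled out or handled by iteration. Remove those points from $S$ to get $S'$, and repeat the argument with the product $GP$ in place of the constraint. Since $S\setminus Z_{\mathbf N}(G)$ is a finite set $T$ of size $\le 30D^{1+1/\alpha}$, pick instead a polynomial $Q$ of small degree vanishing on $T$ minus one point, replace $G$ by $GQ$, and re-run the maximality with a degree budget. Each point of $T$ can be killed by one linear factor, for example $z-\omega$ of $\alpha$-degree $1$, but that is too expensive. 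Instead, interpolate: a polynomial with $\deg_\alpha\lesssim |T|^{\alpha/(1+\alpha)}\lesssim D$ vanishes on $|T|-1$ points while being nonzero at a chosen point. Here use an interpolating polynomial; nonvanishing at the remaining point follows from the dimension count, since the evaluation map has full rank on distinct points. Then the single surviving point lies on a horizontal line of norm $1$. The total degree is $\le D+O(D)\le C_\alpha s^{\alpha/(1+\alpha)}-1$ after enlarging $C_\alpha$.

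\emph{Main obstacle.} The hard step is Case 3: controlling the degree budget uniformly, including the case $\lfloor D^{1/\alpha}\rfloor$ versus $N_1$ wrap-around. The latter is why $\mathbf N$ is taken sufficiently large, so that distinct elements of $\mathbb Z_{\mathbf N}$ give distinct evaluation points.
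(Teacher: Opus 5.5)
\textbf{There is a gap in Case 3.} Your Steps 1--2 and Case 1 are correct. Taking a divisibility-maximal divisor $G$ of $F_0$ with $S\not\subset Z_{\mathbf N}(G)$ reduces everything, at no extra degree cost, to $\emptyset\neq T:=S\setminus Z_{\mathbf N}(G)\subset Z_{\mathbf N}(P)$ with $P$ irreducible. That finishes the proof when $P$ cuts out a line.

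The substance of the lemma is in Case 3, and your argument there does not hold. You claim that some $Q$ with $\deg_\alpha Q\lesssim |T|^{\alpha/(1+\alpha)}$ vanishes on $T\setminus\{p\}$ but not at $p$, because ``the evaluation map has full rank on distinct points.'' This is false. Let $V_E$ be the space of polynomials with $\deg_\alpha\le E$. Then $\dim V_E\ge |T|$ does not make $V_E\to\mathbb C^{T}$ surjective, because points need not impose independent conditions. For example, $n$ points on one horizontal line $w=w_0$ impose at most $\lfloor E^{1/\alpha}\rfloor+1$ conditions on $V_E$. So once $n-1>E^{1/\alpha}$, which happens for $E\sim n^{\alpha/(1+\alpha)}$, every $Q\in V_E$ vanishing at $n-1$ of them vanishes at the last one. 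Points on other low-degree curves fail in the same way (Cayley--Bacharach). Your fallback, ``remove points and repeat,'' has no progress measure either, since $G=1$ and $T=S$ are possible. Finally, the wrap-around issue you flag is not an obstacle: $\mathbb Z_{\mathbf N}\to\mathbb T^2$ is injective for every $\mathbf N$.

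\textbf{How the paper closes this.} It uses strong induction on $|S|$, with $F$ of \emph{minimal} $\alpha$-degree $D$ vanishing on $S$ and with the fewest irreducible factors.
\begin{itemize}
\item If $F$ is irreducible and does not cut out a line, Theorem~\ref{thm:cyclotomic} gives $|S|\le 30D^{1+1/\alpha}$.
\item A polynomial $G$ of $\alpha$-degree $D-1$ can then be forced to vanish on at least $|S|/100$ prescribed points. This step genuinely is just a homogeneous linear system.
\item Minimality of $D$ gives $S\not\subset Z_{\mathbf N}(G)$. This is non-vanishing somewhere on $S$ rather than at a prescribed point, which is all the induction needs.
\item One recurses on $S\setminus Z_{\mathbf N}(G)$, of size at most $0.99|S|$. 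The degree costs sum as the geometric series $\sum_j(0.99^j|S|)^{\alpha/(1+\alpha)}\lesssim |S|^{\alpha/(1+\alpha)}$.
\item The reducible case is handled by splitting $F=GH$ so that the leftover set has size at most $|S|/2$.
\end{itemize}

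\textbf{A possible non-inductive repair.} Your route can be rescued, but it needs real algebraic-geometric input, not a dimension count. Let $P$ have bidegree $(K_P,L_P)$. Its closure in $\mathbb P^1\times\mathbb P^1$ is an integral curve of arithmetic genus $(K_P-1)(L_P-1)$. Serre duality on that curve shows that $T$ imposes independent conditions on polynomials of bidegree $(a,b)$ once $a\ge K_P-1$, $b\ge L_P-1$ and $aL_P+bK_P>2(K_P-1)(L_P-1)-2+|T|$. Combining this with the Beukers--Smyth count $|T|\lesssim(\deg_\alpha P)^{1+1/\alpha}$ gives the interpolating $Q$ with $\deg_\alpha Q=O(\deg_\alpha P)$. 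Note that the correct scale is $\deg_\alpha P$, not $|T|^{\alpha/(1+\alpha)}$.
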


\begin{remark}
    The constant $C_\alpha$ determined in the proof can be taken as 
    \[C_\alpha=\max\left({30^\alpha}{(1-0.3^{\frac\alpha{1+\alpha}})^{-1-\alpha}},\, (1-0.99^{\frac\alpha{1+\alpha}})^{-1}\right), 
   \]  
which goes to $\infty$ as $\alpha\to0$.
\end{remark}

\begin{proof}
We present a recursive algorithm to find the polynomial $F^*$, based on induction on the size of $S$. Let $F$ be a polynomial of \emph{minimal} $\alpha$-degree $D$ such that $S\subset Z_{\mathbf N}(F)$. Existence of minimal degree comes from the definition \eqref{eq:alpha-deg}.  
By a modification of  \cite[Lemma 21]{Cohen}, we have $D \le |S|^{\alpha/(1+\alpha)}$. This uses the trivial linear algebra that we can always find a polynomial with $d$ coefficients, vanishing at arbitrary given $d-1$ distinct points.
If several such polynomials exist, choose one with the fewest irreducible factors.
 
 We now consider four cases. The first two are good and simple and serve as the base for the induction, while the last two involve induction on the size of $S$. In the fourth reducible case, the polynomial can be factored into two proper factors, and then we consider the non-zero point set of the dominant factor, which is essentially smaller than $S$. After further inductive steps, we eventually reach irreducible cases.
In particular, in the third case, when $F$ is irreducible but does not cut out a line, Theorem \ref{thm:cyclotomic} implies that it has not many cyclotomic points, i.e.
\[|S|\le |Z_{\mathbf N}(F)|\lesssim D^{1+1/\alpha}\lesssim |S|.\]
Then, by the trivial linear algebra, we obtain a polynomial of $\alpha$-degree $D-1$ whose zero point set is larger than $0.01|S|$. Induction then interprets that the set $S$ after removing these zero points is smaller than $0.99|S|$.

\emph{Case 1. $F$ is irreducible and cuts out an irreducible line $\ell$.} Then $F$ is of the form
\[
    F(z,w) = z^a w^b - \zeta \quad \text{or} \quad F(z,w) = z^a - \zeta w^b.
\]
Thus we can assume
\[
    Z(F)=\{(x,y)\in \mathbb{T}^2:ax+by=c\}
\]
for some coprime integers $a,b\in \mathbb{Z}$ and some $c\in \mathbb{Q}$, and consequently
\[
    Z_{\mathbf N}(F)=\left\{(x,y)\in {\mathbb Z_{\mathbf N}}:\frac{ax}{N_1}+\frac{by}{N_2} \equiv c \mod 1\right\}.
\]
Since $S\subset Z_{\mathbf N}$ is non-empty, we may take $F^*=1$ and the irreducible line $\ell=Z_{\mathbf N}(F)$.
Note that
\[\|\ell\|_\alpha = \deg_\alpha F=D \leq |S|^{\alpha/(1+\alpha)}.\]

\emph{Case 2.  The set $S$ is small.} Assume $|S|\le S_0(\alpha)$, where $S_0(\alpha)$ is given in the next case. We may assume first $|S|\leq 200$. Let $S = \{(x_k, y_k) \in {\mathbb Z_{\mathbf N}}\}$, and let $\{x_1,\dots,x_m\}$ be all distinct $x$-coordinates appearing in $S$. If $m=1$, we take as before the trivial polynomial $F^*=1$ and the vertical line $x=x_m$.
If $m>1$, set the polynomial
\[
    F^* = (z-e^{2\pi i x_1/N_1})\cdots (z-e^{2\pi i x_{m-1}/N_1}).
\]
Then its $\alpha$-degree is
\[\deg_\alpha F=(m-1)^\alpha \le 200^\alpha |S|^{\alpha/(1+\alpha)},\]
and $S\setminus Z_{\mathbf N}(F^*)$ is non-empty and lies on the vertical line $x = x_m$ in ${\mathbb Z_{\mathbf N}}$. Note that for all $S$ with $|S|\le S_0(\alpha)$, the result still holds and the constant $C_\alpha$ in the lemma can be taken uniformly.

\emph{Case 3. $F$ is irreducible but does not cut out a line.} In this case,
by Theorem \ref{thm:cyclotomic} we have $|S|\leq 30D^{1+1/\alpha}\le 30|S|$.
We can choose $|S|\ge S_0(\alpha)$ and then $D\ge D_0(\alpha)$ such that $(1-1/D)^{1+1/\alpha}>3/10$.
Then we can always choose a polynomial $G$ of $\alpha$-degree $D-1$ whose number of zero points in $S$ is at least
\[
    \min(|S|,(D-1)^{1+\frac{1}{\alpha}})\geq |S|/100.
\]
Let $A = S\cap Z_{\mathbf N}(G)$. Note that $S\setminus A$ is nonempty, i.e., $S\not\subset Z_{\mathbf N}(G)$, by the minimality of $D$.
Since $|S\setminus A| \leq 0.99|S|$, we can apply the inductive hypothesis 
(e.g., we can choose in the first inductive step $|S|/S_0\in (1,1/0.99)$ such that $|S \setminus A|\le S_0$) 
to find a polynomial $H$ whose non-zero point set in $S\setminus A$ is nonempty and contained in one irreducible line $\ell$ with $\deg_{\alpha} H\leq C_\alpha|S\setminus A|^{\alpha/(1+\alpha)}$. Set $F^*=GH$.
Then $S\setminus Z_{\mathbf N}(F^*)= (S\setminus A)\setminus Z_{\mathbf N}(H)$ also lies on the irreducible line $\ell$, and by \eqref{eq:relation-deg-poly}
\[\operatorname{deg}_\alpha F^*\leq \operatorname{deg}_\alpha G + \operatorname{deg}_\alpha H
     \leq D-1 + C_\alpha|S\setminus A|^{\alpha/(1+\alpha)} < C_\alpha|S|^{\alpha/(1+\alpha)} - 1,\]
provided we pick $C_\alpha$ large enough to ensure $C_\alpha(1-0.99^{\frac\alpha{1+\alpha}})\gg 1$.
The last inequality follows from $|S\setminus A|\leq 0.99|S|$ and $D\leq |S|^{\alpha/(1+\alpha)}$.

\emph{Case 4. $F$ is reducible.} Let $F = GH$ be a nontrivial factorization, where neither $G$ nor $H$ are scalars 
and $|S \cap Z_{\mathbf N}(G)|\le |S \cap Z_{\mathbf N}(H)|$. Let $T = S\setminus Z_{\mathbf N}(H)$.
Because $\operatorname{deg}_\alpha H \leq \operatorname{deg}_\alpha F$ and $H$ has fewer irreducible factors than $F$, 
we have $S \not\subset Z_{\mathbf N}(H)$. Hence $T$ is nonempty. 
From $|S \cap Z_{\mathbf N}(G)|\le |S \cap Z_{\mathbf N}(H)|$ we obtain $|T| \leq |S|/2$.
Using the inductive hypothesis we may find a polynomial $P$ whose non-zero point set in $T$ 
is nonempty and lies on one irreducible line $\ell$ with $\deg_{\alpha} P\leq C_\alpha|T|^{\alpha/(1+\alpha)}$.
Set $F^* = HP$; then $S\setminus Z_{\mathbf N}(F^*)=T\setminus Z_{\mathbf N}(P)$ is nonempty and lies on $\ell$. By \eqref{eq:relation-deg-poly}, the $\alpha$-degree satisfies
\[
        \operatorname{deg}_\alpha F^* \leq \operatorname{deg}_\alpha P + \operatorname{deg}_\alpha H
        \leq C_\alpha(|S|/2)^{\alpha/(1+\alpha)} + |S|^{\alpha/(1+\alpha)} \leq C_\alpha|S|^{\alpha/(1+\alpha)}-1,
\]
if we pick $C_\alpha$ sufficiently large.

The proof is complete; the interested reader is referred to \cite[Figure 5]{Cohen} for a pictorial interpretation of the isotropic case.
\end{proof}

\subsection{Line containment lemma}
We recall the following lemma from \cite[Lemma 14]{Cohen}. It implies that if, for a given direction, no line in that direction is entirely contained in the limiting set $\mathbf X$, then every line in that direction contains a point whose distance to $\mathbf X$ is bounded below by a positive constant depending only on $\mathbf X$.
\begin{lemma}[Line containment/separation]\label{lem:Lemma 4 in Cohen2025}
Suppose a set $\mathbf{X} \subsetneq \mathbb{T}^2$ is closed. Then there exists a constant $c_{\mathbf{X}} > 0$, depending only on $\mathbf X$, such that for every direction $\mathbf{v} \in \mathbb{R}^2 \setminus \{0\}$, either there exists some $\mathbf p\in \mathbb T ^2$ such that the coset $\mathbb{R}\mathbf{v} + \mathbf{p}$ lies entirely in $\mathbf{X}$, or
\[    \sup_{\mathbf x\in\mathbb{R}\mathbf{v}+\mathbf{p}} d_{\alpha, \mathbb T^2}(x,\mathbf{X}) \ge c_{\mathbf{X}} \qquad \forall\ \mathbf p\in \mathbb T^2.\]
\end{lemma}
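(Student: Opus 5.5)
We argue by compactness, uniformly over all directions and all base points. Define
\[
\Psi(\mathbf v,\mathbf p):=\sup_{t\in\mathbb R} d_{\alpha,\mathbb T^2}(\mathbf p+t\mathbf v,\mathbf X),\qquad \mathbf v\in S^1,\ \mathbf p\in\mathbb T^2,
\]
where we may normalize $\mathbf v$ to the unit circle since the coset $\mathbb R\mathbf v+\mathbf p$ depends only on the direction. Fix a direction $\mathbf v$ such that no coset $\mathbb R\mathbf v+\mathbf p$ is contained in $\mathbf X$. Since $\mathbf X$ is closed, for each $\mathbf p$ some point of $\mathbf p+\mathbb R\mathbf v$ lies at positive distance from $\mathbf X$, so $\Psi(\mathbf v,\mathbf p)>0$. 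The task is to show that $\Psi$ is bounded below by a constant $c_{\mathbf X}$ that does not depend on $\mathbf v$ or $\mathbf p$.

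\emph{Semicontinuity.} The function $(\mathbf v,\mathbf p,t)\mapsto d_{\alpha,\mathbb T^2}(\mathbf p+t\mathbf v,\mathbf X)$ is continuous, because $d_{\alpha,\mathbb T^2}$ induces the standard topology and is uniformly equivalent to the usual metric on the compact torus. A supremum of continuous functions is lower semicontinuous, so $\Psi$ is lower semicontinuous on $S^1\times\mathbb T^2$. Restricted to a fixed good direction $\mathbf v$, the function $\mathbf p\mapsto\Psi(\mathbf v,\mathbf p)$ is positive and lower semicontinuous on the compact space $\mathbb T^2$, so it attains a positive minimum. This settles each fixed direction, but the minimum may degenerate as $\mathbf v$ varies.

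\emph{Uniformity in the direction (main obstacle).} Suppose, for contradiction, that there are good directions $\mathbf v_n$ and points $\mathbf p_n$ with $\Psi(\mathbf v_n,\mathbf p_n)\to0$. Passing to a subsequence, $(\mathbf v_n,\mathbf p_n)\to(\mathbf v,\mathbf p)$. Each compact segment $\{\mathbf p_n+t\mathbf v_n : |t|\le T\}$ lies in the $\Psi(\mathbf v_n,\mathbf p_n)$-neighborhood of $\mathbf X$ and converges to $\{\mathbf p+t\mathbf v : |t|\le T\}$. Since $\mathbf X$ is closed, the whole coset $\mathbf p+\mathbb R\mathbf v$ therefore lies in $\mathbf X$. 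The difficulty is that $\mathbf v$ need not be one of the good directions, so this is not yet a contradiction.

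The case split is by the slope of the limit direction:
\begin{itemize}
\item \emph{Irrational slope.} The coset $\mathbf p+\mathbb R\mathbf v$ is dense in $\mathbb T^2$, so $\mathbf X=\mathbb T^2$, contradicting $\mathbf X\subsetneq\mathbb T^2$.
\item \emph{Rational slope.} The limit coset is a closed circle of length $L(\mathbf v)$, and $L(\mathbf v)\to\infty$ as the height of the slope grows. Only finitely many rational directions have height at most a fixed $H$; fix a threshold $H$, to be chosen in the next paragraph, and call these the bounded-height directions. If $\mathbf v_n=\mathbf v$ for infinitely many $n$, then $\mathbf v$ is itself a good direction and the fixed-direction step gives a contradiction. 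Otherwise $\mathbf v_n\ne\mathbf v$ with $\mathbf v_n\to\mathbf v$, and I would use two facts:
\begin{itemize}
\item Long segments in a nearby direction sweep out a full strip around the limit closed line.
\item The complement $\mathbb T^2\setminus\mathbf X$ contains a ball $B$ of some radius $r$. Every closed line of length much larger than $1/r$ meets $B$, which gives a uniform lower bound for all high-height rational directions once $H$ is taken large enough.
\end{itemize}
The finitely many bounded-height directions are then handled individually by the fixed-direction step.
\end{itemize}

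It remains to organize the argument so that the sequence $\mathbf v_n$ cannot escape these cases. Handle first the directions whose cosets are long or dense, using the ball $B$ to get a uniform bound. Then the remaining directions lie in small neighborhoods of the finitely many bounded-height rational directions, where the strip argument, together with a neighborhood of the ball $B$, yields the lower bound. The constant $c_{\mathbf X}$ is the minimum of the finitely many constants produced, which depends only on $\mathbf X$ (including $r$).
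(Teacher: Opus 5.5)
Your proof is correct. The paper does not prove this lemma itself; it imports it from Cohen's Lemma~14, which is stated there for the ordinary distance. That version transfers to $d_{\alpha,\mathbb T^2}$ because $d_{\mathbb T}\le 1/2$ forces $d_{\mathbb T}^\alpha\ge d_{\mathbb T}$, hence $d_{\alpha,\mathbb T^2}\ge d_\infty$. You instead work with $d_{\alpha,\mathbb T^2}$ directly through continuity, which is equally fine.

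Your argument rests on the natural ingredients for this statement. Take a $d_\infty$-ball of radius $r$ in $\mathbb T^2\setminus\mathbf X$.
\begin{itemize}
\item It gives the lower bound $r/2$ for every irrational direction, since those cosets are dense.
\item It gives the same bound for every rational direction with primitive vector $(q,p)$ satisfying $(p^2+q^2)^{1/2}\ge 1/r$. Such a closed orbit is $\tfrac12(p^2+q^2)^{-1/2}$-dense, and this is the precise fact behind your ``length much larger than $1/r$'' claim.
\item Each of the finitely many remaining good directions has a positive minimum by your compactness step.
\end{itemize}

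The subsequence/contradiction layer, the ``strip'' claim, and the ``small neighborhoods of bounded-height directions'' in your last paragraph are therefore superfluous. Every direction falls into exactly one of these three classes, so you may directly set $c_{\mathbf X}=\min(r/2,c_{\mathbf v_1},\dots,c_{\mathbf v_K})$, where $\mathbf v_1,\dots,\mathbf v_K$ are the good rational directions of height below $1/r$.
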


\begin{remark}
In our construction, the limiting Cantor set $\mathbf X$ and therefore $c_{\mathbf X}$ depend only on the alphabet $\mathcal A$ and the base $\mathbf M$, and are independent of $k$ and $\mathbf N$.
\end{remark}

The following proposition shows that there are no oblique lines that lie entirely in the anisotropic Cantor sets $\mathbf{X}$ and $\mathbf{Y}$. This is the distinguishing feature of the anisotropy. See Figure \ref{fig:Anistropic_Cantor_set_and_the_intersection_with_lines} for an example.
 \begin{figure}[htbp]
    \centering
    \includegraphics[width=0.5\textwidth]{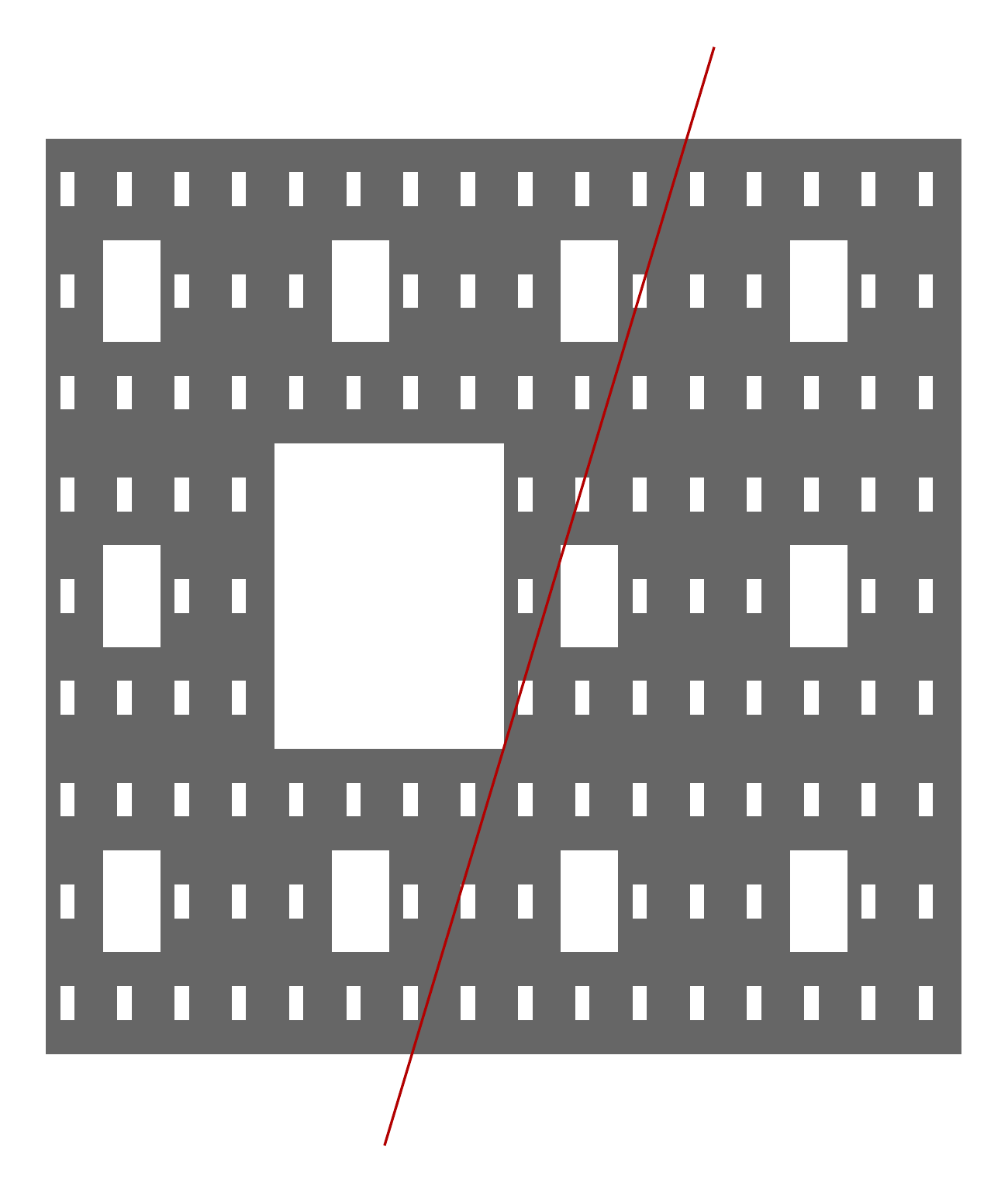}
\caption{Intersection of an oblique line with the \emph{anisotropic} limiting Cantor set $\mathbf{X}$ for parameters $\mathbf M=(4,3)$ and $\mathcal{A}=\mathbb{Z}_{\mathbf M}\setminus\{(1,1)\}$.}\label{fig:Anistropic_Cantor_set_and_the_intersection_with_lines}
\end{figure}

\begin{proposition}[Line containment]\label{prop:No oblique line lies entirely on the anisotropic Cantor set}
  Suppose the alphabet $\mathcal{A}\subset \mathbb Z_{\mathbf M}$ on base $\mathbf M$ is not full. If some open subset of a line $\mathbb{R}\mathbf{v} + \mathbf{p}$ lies entirely in the limiting set $\mathbf{X}$, then $\mathbf{v}$ is parallel to the coordinate axes.
Furthermore, if $\mathbf{v} = (1,0)$, then $\mathcal{A}$ is full for some row; if $\mathbf{v} = (0,1)$, then $\mathcal{A}$ is full for some column.
\end{proposition}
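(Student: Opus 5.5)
The plan is to use the self-affinity of $\mathbf X$ under the expanding map $\Phi$ to blow up a short segment contained in $\mathbf X$ into a long one, and to exploit the fact that $\Phi$ stretches by different factors $M_1>M_2$ in the two coordinates. Suppose an open segment $I=\{\mathbf p+t\mathbf v: |t|<\varepsilon\}$ lies in $\mathbf X$, with $\mathbf v=(v_1,v_2)$ and both $v_1,v_2\neq0$. Since $\mathbf X=\bigcap_k\mathbf X_k$ and $\mathbf X\subset\bigcup_{\mathbf a\in\mathcal A}\overline{Q_{\mathbf a}}$ with $Q_{\mathbf a}$ the level-one rectangles, a subsegment of $I$ lies in a single closed rectangle $\overline{Q_{\mathbf a}}$. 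This holds because $\mathbf X\cap\overline{Q_{\mathbf a}}$ are finitely many closed sets covering $I$, so one of them contains a subsegment by Baire. On that rectangle $\mathbf X\cap\overline{Q_{\mathbf a}}$ is the image of $\mathbf X$ under the affine contraction $\mathbf x\mapsto(\mathbf x+\mathbf a)/\mathbf M$. Hence $\Phi$, extended continuously to the closure, maps the subsegment into $\mathbf X$, and the image is a segment in direction $(M_1v_1,M_2v_2)$ of comparable length, expanded by at least a factor $M_2$.

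Iterating $k$ times gives a segment in $\mathbf X$ with direction $\mathbf v^{(k)}\propto(M_1^kv_1,M_2^kv_2)$. Its slope is $(M_2/M_1)^k v_2/v_1\to0$, and its length is at least $cM_2^k\varepsilon_k$, where $\varepsilon_k$ is the length of the subsegment kept at step $k$. The main obstacle is controlling the losses: at each step we pass to a subsegment lying in one rectangle, which can shrink the length arbitrarily. I would fix this by choosing the subsegment more carefully. A segment of length $L$ with slope nearly horizontal crosses at most $O(M_1L+1)$ rectangle boundaries, so it can be split into pieces each contained in one closed rectangle, and the longest piece has length $\gtrsim L/(M_1L+1)$. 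After applying $\Phi$, a piece of length $\ell$ is stretched to length roughly between $M_2\ell$ and $M_1\ell$, and once the direction is near horizontal the stretch is close to $M_1$. So once the segment is nearly horizontal, the length grows as $\min(M_1\ell\cdot\ell/(M_1\ell+1)\cdots)$. Concretely, a segment of length $\ell\le 1/M_1$ has at most one interior breakpoint in $x$, so one piece has length $\ge\ell/2$ and its image has length $\ge M_1\ell/2\cdot(1-o(1))\ge\frac32\ell$, using $M_1\ge3$. A breakpoint in $y$ is impossible once the slope is so small that the vertical extent is below the gap to the next horizontal grid line, or else it is handled the same way. Thus the length grows geometrically until it is of order $1/M_1$, while the slope tends to $0$. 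Passing to a limit with compactness of $\mathbf X$ and Hausdorff convergence, $\mathbf X$ contains a horizontal segment of length $\gtrsim 1/M_1$, which spans a full level-one column width.

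Next I derive the row statement and the contradiction. If $\mathbf X$ contains a horizontal segment of length $\ge 2/M_1$, then it covers some open horizontal interval of $x$-values containing a full column $(a_1/M_1,(a_1+1)/M_1)$ at height $y$. Applying $\Phi$ to this subsegment gives a full horizontal line $\mathbb T\times\{y'\}\subset\mathbf X$. Since $\mathbf X\subset\mathbf X_1$, every $a_1$ must occur with the row index $a_2$ of that line, or with a boundary row if $y'$ is a grid value. So the row containing $y'$ is full. I would apply the same argument to the limit segment, possibly after first reapplying $\Phi$ once or twice to reach length $\ge 2/M_1$. For the case $\mathbf v=(1,0)$ I run this argument directly. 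For $\mathbf v=(0,1)$ the vertical direction is preserved by $\Phi$ and stretched by $M_2\ge2$, so the same piece-splitting growth works and yields a full vertical line, hence a full column.

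It remains to rule out oblique $\mathbf v$. The argument above already shows that an oblique segment forces a full horizontal line in $\mathbf X$, but that alone is not a contradiction. I would instead use the intermediate stage: before the slope becomes negligible, the segment has length $\sim 1/M_1$ and slope $s\neq0$. Applying $\Phi$ once more, its vertical extent is multiplied by $M_2$ and its horizontal extent by $M_1$. By choosing the stage so that the length is $\sim1$ while $s$ is still $\gtrsim 1/M_1$, we get a segment whose projections onto both axes have length bounded below. Iterating further, a segment of horizontal length $\ge1$ wraps around the torus, and under $\Phi^k$ its pieces in the $x$-direction produce horizontal lines at a dense set of heights, all contained in $\mathbf X$ because each image under $\Phi$ stays in $\mathbf X$. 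By closedness $\mathbf X=\mathbb T^2$, so $\mathcal A$ is full, which contradicts the hypothesis. This density step, getting many distinct heights, is where I expect the most care to be needed.
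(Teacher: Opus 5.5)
Your treatment of axis-parallel segments is essentially fine. The one exception is the grid-height case $y'\in M_2^{-1}\mathbb Z$: there, membership in $\mathbf X_1$ allows two rows, so you need one more application of the map plus a Baire argument. The oblique case, however, which is the real content of the proposition, is not established.

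The stage you want to select does not exist in general. You keep a single piece at each step, so the horizontal and vertical extents are multiplied by $M_1f_k$ and $M_2f_k$ with the \emph{same} fraction $f_k$. Hence the slope at step $k$ is exactly $(M_2/M_1)^k|v_2/v_1|$, while the length grows by at most a factor $M_1$ per step. Starting from a segment of length $\varepsilon$, reaching length $\sim 1$ takes $k\gtrsim \log(1/\varepsilon)/\log M_1$ steps. At that point the slope is $\lesssim |v_2/v_1|\,\varepsilon^{1-\alpha}$, far below $1/M_1$ when $\varepsilon$ is small. Worse, with $f_k\le 1/2$ and $M_2=2$, the vertical extent need not grow at all. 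So the final ``dense set of heights'' step has nothing driving it. And the horizontal segment you get from the Hausdorff limit is, as you note, compatible with $\mathcal A$ having a full row, so it gives no contradiction.

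The missing idea is to normalize the slow (vertical) direction rather than the length. The paper rescales by $(M_1^k,M_2^k)$ a piece of the line of height $M_2^{-k}$. It becomes a segment of height one spanning $(M_1/M_2)^k|v_1/v_2|\gg 1$ units horizontally. Take a missing letter $(a_0,b_0)\notin\mathcal A$. Inside the band $n_0+(b_0/M_2,(b_0+1)/M_2)$ the segment traverses a full unit in $x$, so it enters a hole $(m_0+a_0/M_1,m_0+(a_0+1)/M_1)\times(n_0+b_0/M_2,n_0+(b_0+1)/M_2)$. Every unit cell of the rescaled carpet is empty or a translate of $\mathbf X$, so it misses every such hole.

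Your own framework also works if you stop discarding pieces. The torus map $T\mathbf x=\mathbf M\mathbf x \bmod 1$ is continuous on all of $\mathbb T^2$ and satisfies $T(\mathbf X_k)\subset \mathbf X_{k-1}$, hence $T(\mathbf X)\subset\mathbf X$. So $T^k$ of the \emph{whole} segment lies in $\mathbf X$, and it is the projection of a segment with extents $(M_1^kH_0,M_2^kV_0)$. Once $M_2^kV_0\ge 1$ and $(M_1/M_2)^kH_0\ge M_2V_0$, it meets the open rectangle $(a_0/M_1,(a_0+1)/M_1)\times(b_0/M_2,(b_0+1)/M_2)$, which is disjoint from $\mathbf X_1$. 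This is a contradiction, and it also removes the length-loss problem you identified as the main obstacle.
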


\begin{proof}   
    The statement for $\mathbf{v}$ parallel to the coordinate axes is trivial. 
    So we assume $\mathbf{v}$ is not parallel to any coordinate axis. 
    We need to show that any open subset of the line $L$ in $\mathbb{R}^2$ 
    defined as $\mathbb{R}\mathbf{v} + \mathbf{p}$ cannot lie entirely on $\mathbf{X}$.
    Without loss of generality, we assume $\mathbf{v}=(v_0,1)$ where $v_0>0$; 
    the case $v_0<0$ is similar. 
    It suffices to show that for large $k$ and any $n_0\in \mathbb{N}$ with $n_0\leq M_2^{k}-1$, 
    the open segment $I_{n_0,k}\subset L$ defined by
    \[
        I_{n_0,k}=\left\{\left(x_0+M_2^{-k}tv_0,\;M_2^{-k}(n_0+t)\right):t\in [0,1]\right\}
    \]
    does not lie entirely on $\mathbf{X}$, where $(x_0,M_2^{-k}n_0)$ is the unique point on $L$ 
    whose second coordinate equals $M_2^{-k}n_0$.
   
    We consider the rescaling $\Phi^k:\mathbb{R}^2\to \mathbb{R}^2$ defined by
    \[
        \Phi^k(x,y)= (M_1^kx,\;M_2^ky)
    \]
    and the rescaled interval
    \[
        \Phi^k(I_{n_0,k})=\{(M_1^kx_0+(M_1/M_2)^ktv_0,\;n_0+t):t\in [0,1]\}.
    \]
    Now we examine the structure of the rescaled Cantor set $\Phi^k(\mathbf{X})$.
    Since $\mathcal{A}$ is not full, we choose some $(a_0,b_0)$ not in the alphabet:
    \[
        (a_0,b_0)\in \mathbb Z_{\mathbf M}\setminus \mathcal{A}.
    \]
    The self-similarity of $\mathbf{X}$ implies that for any $(m,n)\in \mathbb{Z}^2\subset \mathbb{R}^2$
    we have
    \[
        \left[\left(m+\frac{a_0}{M_1},\,m+\frac{a_0+1}{M_1}\right)\times 
        \left(n+\frac{b_0}{M_2},\,n+\frac{b_0+1}{M_2}\right)\right]\cap \Phi^k(\mathbf{X})=\emptyset.
    \]
    In view of this ``hole'' in $\Phi^k(\mathbf{X})$, let $t_0\in (b_0/M_2,1)$ be the smallest number such that 
 \[
        M_1^kx_0+(M_1/M_2)^kt_0v_0\in \mathbb{Z}.
    \]   
Set $t_1=t_0+\frac{1}{(M_1/M_2)^kv_0}$.
    The assumption $M_1>M_2$ implies that for sufficiently large $k$,
        \begin{align}
            &t_0\leq \frac{b_0}{M_2}+\frac{10}{(M_1/M_2)^kv_0}, \nonumber \\[2pt]  
            &t_1\leq \frac{b_0}{M_2}+\frac{11}{(M_1/M_2)^kv_0}<\frac{b_0+1}{M_2}. \label{eq:t0_estimate}
        \end{align}
    Let 
    \[
        m_0=M_1^kx_0+(M_1/M_2)^kt_0v_0\in \mathbb{Z},
    \]
    then
    \[
        m_0+1=M_1^kx_0+(M_1/M_2)^kt_1v_0\in \mathbb{Z}.
    \]
    Therefore, \eqref{eq:t0_estimate} implies (see Figure \ref{fig:Rescaled_anisotropic_cantor_set_and_lines})
    \[
        \Phi^k(I_{n_0,k})\cap\left[\left(m_0+\frac{a_0}{M_1},\,m_0+\frac{a_0+1}{M_1}\right)\times 
        \left(n_0+\frac{b_0}{M_2},\,n_0+\frac{b_0+1}{M_2}\right)\right]\neq\emptyset.
    \]
    Hence $\Phi^k(I_{n_0,k})$ does not lie entirely in $\Phi^k(\mathbf{X})$, which completes the proof.
    
\end{proof}

\begin{figure}[htbp]
\centering   \includegraphics[width=0.8\textwidth]{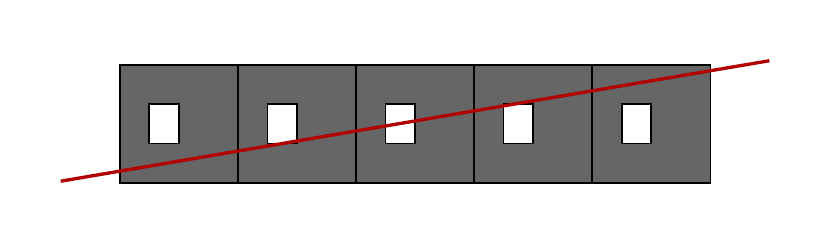}
 \caption{The intersection of $\Phi^k(I_{n_0,k})$ and $\Phi^k(\mathbf{X})$. 
The key point is that the line has very small but positive slope.}
   \label{fig:Rescaled_anisotropic_cantor_set_and_lines}
\end{figure}

\subsection{Proof of Theorem \ref{thm:DFUP}}
We now have all the ingredients to prove the anisotropic discrete FUP \eqref{eq:dfup} for the discrete Cantor sets $\mathcal{X}_k$ and $\mathcal{Y}_k$ defined in \eqref{eq:dcantorset}.

Recall that
\[
\begin{aligned}
\mathcal{X}_k &= \bigl\{(a_1^{(0)}+\cdots+a_1^{(k-1)}M_1^{k-1},\; a_2^{(0)}+\cdots+a_2^{(k-1)}M_2^{k-1}) : (a_1^{(j)},a_2^{(j)})\in\mathcal{A}\bigr\} \subset \mathbb Z_{\mathbf N},\\
\mathbf{X}_k &= \overline{\bigl\{ (x,y)\in \mathbb{T}^2 : (\lfloor M_1^k x\rfloor,\lfloor M_2^k y\rfloor)\in \mathcal{X}_k \bigr\}}  ~  \ ~ \ ~  \searrow~  \  ~  \    
\mathbf{X} = \bigcap_k \mathbf{X}_k \subset \mathbb{T}^2 ,
\end{aligned}
\]
where $\mathcal{X}_k$ is the discrete Cantor set on $\mathbb Z_{\mathbf N}$, $\mathbf{X}_k$ is its closed drawing, and $\mathbf{X}$ is the limiting Cantor set in $\mathbb{T}^2$. 
The sets $\mathbf{X}_k$ decrease to $\mathbf{X}$, which has zero Lebesgue measure: $|\mathbf X|=0$. 
Observe that a point $(x,y)\in \mathbb{T}^2$ belongs to $\mathbf{X}$ if and only if its expansion in the base $\mathbf M$ contains only digits from the alphabet $\mathcal{A}$.

We also denote the discretization of $\mathbf{X}$ by
\begin{equation}\label{eq:XN}
X_{\mathbf N} := \left\{ (x,y)\in \mathbb Z_{\mathbf N} :
\left[\frac{x}{N_1},\frac{x+1}{N_1}\right]\times \left[\frac{y}{N_2},\frac{y+1}{N_2}\right] \cap \mathbf{X} \neq \emptyset \right\} \supset \mathcal{X}_k.
\end{equation}
For $0<r=o(1)\ll 1$, let the (continuous and full) \emph{$(r,\alpha)$-neighborhood} of $\mathbf X$ in $\mathbb T^2$ be 
\[\operatorname{N}_r^\alpha(\mathbf X)=\mathbf X+[-r^{1/\alpha},r^{1/\alpha}]\times[-r,r].\]
Then $(x,y)\in X_{\mathbf N}$ if and only if $(x,y)/\mathbf N \in \operatorname{N}_{1/N_2}^\alpha(\mathbf{X}) \supset \mathbf{X}_k$. 
It is easy to verify that
\begin{equation}\label{eq:NXNinNX}
\operatorname{N}_R^\alpha(X_{\mathbf N}) \subset \mathbf N \cdot \operatorname{N}_{R/N_2}^\alpha(\mathbf X), \qquad\forall\, R=o(N_2), 
\end{equation}
where $\operatorname{N}_R^\alpha(X_{\mathbf N})$ is the discrete half neighborhood \eqref{eq:nb-R} in $\mathbb Z_{\mathbf N}$. 
Since $|\mathbf X|$=0, if we take $R=o(N_2)$ then $|\operatorname{N}_{R/N_2}^\alpha(\mathbf X)| \to 0$ as $k\to \infty$, which implies 
\begin{equation}\label{eq:|NXN|}
|X_{\mathbf N}|\le |\operatorname{N}_R^\alpha(X_{\mathbf N})|= o(N_1 N_2).\end{equation}
We use similar notation for the sets associated with another alphabet $\mathcal{B}$, namely $\mathcal{Y}_k$, $\mathbf{Y}_k$, $\mathbf{Y}$, $Y_{\mathbf N}$, $\operatorname{N}_R^\alpha(Y_{\mathbf N})$ and $\operatorname{N}_{r}^\alpha(\mathbf Y)$.

\begin{proof}[Proof of the Theorem \ref{thm:DFUP}]  
  At the beginning of this section we mentioned that, by submultiplicativity and unitarity, it suffices to show that for some large $k$ there is no $f$ supported on $\mathcal{X}_k$ with $\operatorname{supp} \hat f \subset \mathcal{Y}_k$.
Assume, for contradiction, that for every $k$ there exists a nonzero function $f:\mathbb{Z}_{\mathbf N}\to\mathbb{C}$ with 
\[
\operatorname{supp} \hat{f}\subset \mathcal Y_k \subset Y_{\mathbf N}, \qquad 
\operatorname{supp} f \subset \mathcal X_k \subset X_{\mathbf N},
\]
where $X_{\mathbf N}, Y_{\mathbf N}$ are the discretized sets \eqref{eq:XN}.
  
Applying the line support lemma (Lemma~\ref{lem:Cohen-Lemma11}) to $f$ and $S = \operatorname{supp} f$, we obtain a triple $(R, \ell, g)$ such that $0 < R \le C_\alpha |S|^{\alpha/(1+\alpha)}$, $\ell$ is a line with $\|\ell\|_\alpha \le R$, and $g$ is a nonzero function supported on $\ell \cap S$ with $\operatorname{supp} \hat{g} \subset \mathrm{N}_R^\alpha(Y_{\mathbf N})$. Set the vector $\mathbf{v} = (a/N_1, b/N_2)$, where $(a,b)$ is the coprime normal vector of $\ell$.

Since $\mathcal A$ is not full, by Proposition \ref{prop:No oblique line lies entirely on the anisotropic Cantor set}, $\mathbf v$ is horizontal, vertical, or (oblique but) satisfies the property that no line of the form $\mathbb{R}\mathbf{v} + \mathbf{p}$ lies in $\mathbf Y$.

\emph{Case 1. Assume $\mathbb{R}\mathbf{v} + \mathbf{p}$ does not lie entirely in $\mathbf{Y}$ for any $\mathbf p$.}  
Note that when $\mathbf v$ is neither vertical nor horizontal, this is always the case by Proposition \ref{prop:No oblique line lies entirely on the anisotropic Cantor set}.
We \emph{claim} that there exists a constant $\delta=\delta(\alpha,C_\alpha,c_{\mathbf{Y}})\in (0,1)$ such that
\begin{equation}\label{eq:Claim of the first case in the proof of the main thm}
|S| = |\operatorname{supp} f| \ge (\delta/C_\alpha)^{(1+\alpha)/\alpha} N_1N_2,
\end{equation}
where $c_{\mathbf{Y}}$ is the constant in Lemma \ref{lem:Lemma 4 in Cohen2025} and $C_\alpha$ is the constant in Lemma \ref{lem:Cohen-Lemma11}. 
This claim leads to a contradiction because $|S|\le |X_{\mathbf N}| = o(N_1N_2)$. 
We will take $\delta=c_{\mathbf{Y}}/2$, which is independent of $\mathbf N$.
 
For the claim, it suffices to show that $R/N_2 \ge \delta$ in view of $R\le C_\alpha|S|^{\alpha/(1+\alpha)}$. 
We now argue by contradiction.  
Suppose $R/N_2 < \delta$; we will show that $g=0$. 
        
Recall that $\mathbf{v} = (a/N_1,b/N_2)$ where $(a,b)$ is the normal vector of $\ell$. 
Because $g$ is supported on $\ell$ in the physical space, the dual function $\hat{g}$ has {constant magnitude} on the dual lines $\mathbb{Z}\cdot(a,b) + \mathbf{p}$ in the Fourier space. Indeed, let $(\xi,\eta) \in \mathbb{Z}_{\mathbf N}$ be arbitrary; for $n\in \mathbb{Z}$ we have
\[
\hat{g}(\xi+na,\eta+nb) = e^{2\pi i n c}\, \hat{g}(\xi,\eta),
\]
which is a rotation in the complex plane, where 
\[
\hat{g}(\xi,\eta) = \frac{1}{\sqrt{N_1N_2}} 
\sum_{\frac{ax}{N_1}+\frac{by}{N_2}\equiv c \mod 1} 
g(x,y)\,e^{-2\pi i(\frac{x\xi}{N_1}+\frac{y\eta}{N_2})}.
\]        
Take $\mathbf{p}=(\xi/N_1,\eta/N_2)\in \mathbb{T}^2$. By assumption, $\mathbb{R}\mathbf{v} + \mathbf{p}$ does not lie entirely in $\mathbf{Y}$. Then by Lemma \ref{lem:Lemma 4 in Cohen2025} there exists $t\in \mathbb{R}$ such that $d_{\alpha,\mathbb T^2}(t\mathbf{v} + \mathbf{p}, \mathbf{Y}) \ge c_{\mathbf{Y}}$. Let $n$ be the nearest integer to $t$. Then, with $\delta = c_{\mathbf{Y}}/2$, we have
\[
d_{\alpha,\mathbb T^2}(n\mathbf{v} + \mathbf{p}, \mathbf{Y}) \ge c_{\mathbf{Y}} - \max\left(\frac{|a|^\alpha}{N_1^\alpha}, \frac{|b|}{N_2}\right) \ge 
c_{\mathbf{Y}}- \frac{R}{N_2} > c_{\mathbf{Y}}-\delta = \delta > R/N_2.
\]
Since  \eqref{eq:NXNinNX} we have 
\[
\operatorname{supp} \hat{g} \subset \mathrm{N}_R^\alpha(Y_{\mathbf N}) \subset \mathbf N\cdot \operatorname N_{R/N_2}(\mathbf Y),
\]
then $(\xi+na,\eta+nb)=\mathbf N\, (n\mathbf v+\mathbf p)\notin \operatorname{supp}\hat{g}$, and therefore $\hat{g}(\xi+na,\eta+nb)=\hat{g}(\xi,\eta)=0$ for all $(\xi,\eta) \in \mathbb{Z}_{\mathbf N}$. 
This shows that $\hat{g}=g=0$, which contradicts the fact that $g$ is nonzero, and completes the proof of the claim \eqref{eq:Claim of the first case in the proof of the main thm}. 
Note that this case does not have a simple correspondence with any condition on the alphabets.          

\emph{Case 2. Assume $\mathbf v$ is vertical and there exists a (vertical)  line  $\mathbb R\mathbf v+\mathbf p$ for some $\mathbf p\in \mathbb T^2$ that lies entirely in $\mathbf Y$.} Then the line $\ell$ is horizontal, and in this case $\mathcal B$ has some full column. 
By the condition on the alphabets $\mathcal A, \mathcal B$ in the theorem we may assume that $\mathcal{A}$ is not full for each row.

Then we can write $\ell$ as $\{(t,y_0):t\in \mathbb{R}\}$ for some $y_0\in \mathbb{Z}_{N_2}$, and introduce a new function $\tilde{g}:\mathbb{Z}_{N_1}\to \mathbb{C}$ by $\tilde{g}(n):=g(n,y_0)$. Then
\[
\hat{g}(\xi,\eta) = \frac{1}{\sqrt{N_1N_2}} \sum_{n\in\mathbb{Z}_{N_1}} \tilde{g}(n)\,e^{-2\pi i(\frac{n\xi}{N_1}+\frac{y_0\eta}{N_2})}
= \frac{1}{\sqrt{N_2}}\,e^{-\frac{2\pi iy_0 \eta}{N_2}}\, \hat{\tilde{g}}(\xi),
\]
where $\hat{\tilde{g}}$ is the $\mathbb{Z}_{N_1}$-Fourier transform of $\tilde{g}$. It is easy to see that $|\operatorname{supp} \hat{g}| = N_2|\operatorname{supp} \hat{\tilde{g}}|$.
However, since $\mathcal{A}$ is not full for each row, there exists a constant $c_\mathcal{A}>0$ (depending only on $\mathcal{A}$) such that there is a open segment contained in $\ell\setminus X_{\mathbf N}$ of length at least $c_\mathcal{A} N_1$ (for sufficiently large $k$, say $k\ge 10$). 
Because $g$ is supported on $\ell\cap X_{\mathbf N}$, the one‑dimensional discrete uncertainty principle \cite[Proposition 9]{Cohen} implies
\[
|\operatorname{supp} \hat{g}| = N_2|\operatorname{supp} \hat{\tilde{g}}| \ge N_2 c_\mathcal{A} N_1,
\]
which contradicts $|\operatorname{supp} \hat{g}|\le |\mathrm{N}_R^\alpha(Y_{\mathbf N})| = o(N_1N_2)$ by \eqref{eq:|NXN|}.

\emph{Case 3. Assume $\mathbf v$ is horizontal and there exists a (horizontal)  line  $\mathbb R\mathbf v+\mathbf p$ for some $\mathbf p\in \mathbb T^2$ that lies entirely in $\mathbf Y$.} Then the line $\ell$ is vertical, and in this case $\mathcal B$ has some full row. 
By the condition on the alphabets $\mathcal A, \mathcal B$ in the theorem we may assume that $\mathcal{A}$ is not full for each column. 
The argument  is the same as that in \emph{Case 2}, once we swap the roles of rows and columns.
\end{proof}

\section{Anisotropic continuous FUP}\label{sec:cfup}
This section is devoted to the study of the anisotropic FUP for continuous Fourier transforms on anisotropic fractal sets. We consider both the (isotropic) semiclassical Fourier transform \eqref{eq:DFT} and its anisotropic counterpart \eqref{eq:anis-semi-F}. First, we investigate the relationship between the discrete and continuous versions of the FUP. Then, we prove an anisotropic FUP for the (isotropic) semiclassical Fourier transform.  A rescaled FUP for the standard Fourier transform will further imply a reflected version for Bedford-McMullen carpets and anisotropic  semiclassical Fourier transform.  

\subsection{Continuous FUP implies discrete FUP}\label{subsec:implication}
As shown in \cite[Proposition 5.8]{dyatlov2018dolgopyat} and 
\cite[Appendix B]{Cohen}, the discrete FUP for discrete Cantor sets $\mathcal{X}_k,\mathcal{Y}_k$ 
is a corollary of the corresponding continuous version of FUP for the limiting Cantor sets $\mathbf{X},\mathbf{Y}$. 
However, note that since our discrete Fourier transform is \textit{not isotropic}, i.e., 
the frequencies along the $x$-direction and the $y$-direction are different, the corresponding continuous version of FUP should also be anisotropic.
More precisely, under the following discrete-continuous correspondence 
\[ (x,y)\in {\mathbb Z_{\mathbf N}} \Longleftrightarrow \left(\frac{x}{N_1},\frac{y}{N_2}\right)\in \mathbb{T}^2\]
the anisotropic discrete Fourier transform $\mathcal{F}_{\mathbf N}$  corresponds to the anisotropic semiclassical Fourier transform 
\[ \mathcal{F}_h^{\operatorname{ani}}f(\xi,\eta) = 
    \frac{1}{h^{(1+\alpha)/2}} \int_{\mathbb{R}^2} e^{-2\pi i(x\xi/h + y\eta/h^{\alpha})} f(x,y) dx dy
\]
if we use the semiclassical parameter $h = 1/N_1\to 0$.
Therefore, we expect that the discrete anisotropic FUP  
\[
    \|\mathds{1}_{\mathcal{Y}_k}\mathcal{F}_{\mathbf{N}}\mathds{1}_{\mathcal{X}_k}\|_{{\ell^2_{\mathbf N}}\to 
    {\ell^2_{\mathbf N}}} \leq C N_1^{-\beta}
\]
should correspond to the following \emph{continuous version of  {anisotropic}  FUP}:
\[ \|\mathds{1}_{\mathbf{Y}_h}\mathcal{F}_h^{\operatorname{ani}}\mathds{1}_{\mathbf{X}_h}\|_{L^2(\mathbb{R}^2)\to L^2(\mathbb{R}^2)} \leq C h^{\beta},\]
where $\mathbf X_h$ is a $h$-scale neighborhood of $\mathbf X$ in $\mathbb R^2$.

Before stating the anisotropic continuous FUP, we need to introduce the \textit{$\alpha$-distance on $\mathbb{R}^2$}: for any two points $\mathbf x=(x_1,x_2), \mathbf y=(y_1,y_2) \in \mathbb R^2$ 
\[
d_{\alpha,\mathbb{R}^2}(\mathbf x, \mathbf y) = \max(|x_1-y_1|^\alpha,|x_2-y_2|).
\]
We denote the \emph{$\alpha$-ball} centered at $\mathbf{x}$ of radius $r$ with respect to the $\alpha$-distance $d_{\alpha,\mathbb{R}^2}$ by
\begin{equation}\label{eq:alpha-ball}
B^\alpha_r(\mathbf{x}) = \{ \mathbf y \in \mathbb{R}^2 : d_{\alpha,\mathbb{R}^2}(\mathbf y,\mathbf{x}) < r \},
\end{equation}
and we write $B^\alpha_r:=B^\alpha_r(0)$. Note that $B^\alpha_{h^\alpha}$ is a rectangle of dimensions $2h\times 2h^\alpha$. To distinguish it from the standard Euclidean ball, we use $B_r(x)=B(x,r)$ for the standard ball.

The following theorem shows that the continuous  anisotropic FUP implies the discrete anisotropic FUP, as in the isotropic case. 
\begin{theorem}[Implication]\label{thm:C-D}
    Let $\mathcal{A},\mathcal{B}\subset {\mathbb Z_{\mathbf N}}$ be two alphabets with base $\mathbf M$, and let $\mathcal{X}_k,\mathcal{Y}_k\subset \mathbb Z_{\mathbf N}$ and $\mathbf{X},\mathbf{Y}\subset \mathbb{T}^2$ be the associated discrete Cantor iterates and limiting Cantor sets as $k\to\infty$.
    Suppose there exist $\beta>0$ and $C>0$ independent of $h$ such that
    \begin{equation}\label{eq:ani-cont-fup}
        \|\mathds{1}_{\mathbf{Y}+B_{h^\alpha}^\alpha}\mathcal{F}_h^{\operatorname{ani}}\mathds{1}_{\mathbf{X}+B_{h^\alpha}^\alpha}\|_{L^2(\mathbb{R}^2)\to L^2(\mathbb{R}^2)} \leq C h^{\beta}.
    \end{equation}
Then for every $\beta'\in (0,\beta)$ we can find $C_{\beta'}>0$ independent of $k$ such that
    \begin{equation}\label{eq:discrete-fup-from-cfup}
        \|\mathds{1}_{\mathcal{Y}_k}\mathcal{F}_{\mathbf{N}}\mathds{1}_{\mathcal{X}_k}\|_{\ell^2_{\mathbf N}\to \ell^2_{\mathbf N}} \leq C_{\beta'} M_1^{-k\beta'}.
    \end{equation}
Here $\mathcal{F}_h^{\operatorname{ani}}$ and $\mathcal{F}_{\mathbf{N}}$ are the anisotropic semiclassical Fourier transform \eqref{eq:anis-semi-F} and the anisotropic discrete Fourier transform \eqref{eq:DFT}, respectively.
\end{theorem}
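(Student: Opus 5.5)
We follow the scheme of \cite[Proposition 5.8]{dyatlov2018dolgopyat} and \cite[Appendix B]{Cohen}, adapted to the anisotropic scaling. Write $h=1/N_1=M_1^{-k}$, so that $h^\alpha=1/N_2$. Fix a small parameter $\varepsilon>0$, to be tied to $\beta-\beta'$, and a nonnegative bump $\phi\in C_c^\infty(B^\alpha_{1})$ with $\int\phi=1$. Rescale it anisotropically to
\[
\phi_{\mathbf j}(x,y)=h^{-(1+\alpha)/2}\,\phi\bigl((x-j_1/N_1)h^{-1+\varepsilon},\,(y-j_2/N_2)h^{-\alpha(1-\varepsilon)}\bigr),
\]
which is concentrated in a rectangle of size $h^{1-\varepsilon}\times h^{\alpha(1-\varepsilon)}$ around $\mathbf j/\mathbf N$.

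The plan has four steps.
\begin{enumerate}
\item \emph{Embedding.} Given $u\in\ell^2_{\mathbf N}$ supported on $\mathcal X_k$, set $U=\sum_{\mathbf j}u(\mathbf j)\,\phi_{\mathbf j}$. A sum over an integer $\mathbf m\in\mathbb Z^2$ (Poisson-type summation) handles the lift from $\mathbb T^2$ to $\mathbb R^2$ and gives the right normalization.
\item \emph{Almost orthogonality.} The bumps at distinct lattice points overlap only boundedly, up to a factor $h^{-O(\varepsilon)}$. Hence $\|U\|_{L^2}\lesssim h^{-O(\varepsilon)}\|u\|$.
\item \emph{Support of $U$.} $U$ is supported in $\mathbf X+B^\alpha_{h^{\alpha(1-\varepsilon)}}$. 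This follows because $\mathcal X_k/\mathbf N\subset\mathbf X+[0,h]\times[0,h^\alpha]$, which holds since every cell of $\mathcal X_k$ contains points of $\mathbf X$.
\item \emph{Fourier side.} Compute $\mathcal F_h^{\operatorname{ani}}U$. Because of the phase $e^{-2\pi i(x\xi/h+y\eta/h^\alpha)}$ and the lattice spacing $(h,h^\alpha)$, it equals a periodized version of $\mathcal F_{\mathbf N}u$ evaluated at $(\xi,\eta)$ modulo $\mathbb Z^2$. This is multiplied by $\hat\phi\bigl(\xi h^{-\varepsilon},\eta h^{-\alpha\varepsilon}\bigr)$-type factors, which localize $(\xi,\eta)$ to a box of size $h^{\varepsilon}\times h^{\alpha\varepsilon}$ around each integer shift. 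So on the fundamental cell, $\mathcal F_h^{\operatorname{ani}}U$ essentially reproduces $\mathcal F_{\mathbf N}u$ smeared at the discrete scale. Restricting to $\mathbf Y+B^\alpha_{h^\alpha}$, whose $h$-scale cells contain the points $\mathcal Y_k/\mathbf N$, recovers $\|\mathds 1_{\mathcal Y_k}\mathcal F_{\mathbf N}u\|$ up to factors $h^{-O(\varepsilon)}$ and $O(h^\infty)$ tails.
\end{enumerate}

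To pass between the neighborhoods of size $h^{\alpha(1-\varepsilon)}$ arising in step 3 and the neighborhood $B^\alpha_{h^\alpha}$ required by \eqref{eq:ani-cont-fup}, we use the self-similarity of $\mathbf X$. A neighborhood of size $h^{\alpha(1-\varepsilon)}$ is covered by $\lesssim h^{-O(\varepsilon)}$ translates of $\mathbf X+B^\alpha_{h^\alpha}$. Alternatively, we apply \eqref{eq:ani-cont-fup} at the coarser semiclassical parameter and lose a factor $h^{-O(\varepsilon)}$. Either route gives a factor that is absorbed by choosing $\varepsilon$ small in terms of $\beta-\beta'$.

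Combining the steps,
\[
\|\mathds 1_{\mathcal Y_k}\mathcal F_{\mathbf N}u\|\lesssim h^{-O(\varepsilon)}\bigl\|\mathds 1_{\mathbf Y+B^\alpha_{h^\alpha}}\mathcal F^{\operatorname{ani}}_h\mathds 1_{\mathbf X+B^\alpha_{h^\alpha}}U\bigr\|+O(h^\infty)\|u\|\le C h^{\beta-O(\varepsilon)}\|u\|,
\]
which is \eqref{eq:discrete-fup-from-cfup} with $h=M_1^{-k}$.

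The main obstacle is step 4: making the discrete-to-continuous Fourier comparison quantitative with anisotropic bumps. The bumps must be wide enough ($h^{1-\varepsilon}$, $h^{\alpha(1-\varepsilon)}$) that their Fourier transforms are localized within $h^{\varepsilon}$-boxes, which are much smaller than the period, so that aliasing errors are $O(h^\infty)$. At the same time they must be narrow enough that the physical supports stay near $\mathbf X$. I would handle this exactly as in \cite[Appendix B]{Cohen}, treating each coordinate separately with its own scale $h$ or $h^\alpha$. The product structure of the bumps and of $B^\alpha_r$ should make the isotropic argument factor coordinatewise.
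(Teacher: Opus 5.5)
There is a genuine gap, and it is in Step 4: the bump widths are the wrong way round. With your $\phi_{\mathbf j}$, the substitution $x=j_1h+h^{1-\varepsilon}s$, $y=j_2h^\alpha+h^{\alpha(1-\varepsilon)}t$ gives exactly
\[
\mathcal F_h^{\operatorname{ani}}U(\xi,\eta)=h^{-\varepsilon(1+\alpha)}\,\hat\phi\bigl(h^{-\varepsilon}\xi,\,h^{-\alpha\varepsilon}\eta\bigr)\,P(\xi,\eta),\qquad P(\xi,\eta)=\sum_{\mathbf j}u(\mathbf j)\,e^{-2\pi i(j_1\xi+j_2\eta)}.
\]
$P$ is indeed the $1$-periodic interpolant of $\sqrt{N_1N_2}\,\mathcal F_{\mathbf N}u$. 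The envelope, however, is a single non-periodized factor; an $L^2$ function cannot have a periodic envelope. It therefore localizes $(\xi,\eta)$ to an $h^{\varepsilon}\times h^{\alpha\varepsilon}$ box around the origin only, not around every integer shift. Physical bumps wider than the lattice act as a low-pass filter and erase $\mathcal F_{\mathbf N}u$ at all but the lowest frequencies.

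In fact your hypothesis-free inequality $\|\mathds 1_{\mathcal Y_k}\mathcal F_{\mathbf N}u\|\lesssim h^{-O(\varepsilon)}\|\mathds 1_{\mathbf Y+B^\alpha_{h^\alpha}}\mathcal F^{\operatorname{ani}}_hU\|+O(h^\infty)\|u\|$ proves too much. Take $\mathcal A$ a full row and $\mathcal B=\{c\}\times\mathbb Z_{M_2}$ a full column with $c\neq0$. Then $\mathbf Y+B^\alpha_{h^\alpha}$ stays at distance $\ge 1/M_1$ from $0$, so the right side is $O(h^\infty)\|u\|$ for every $u$. But $\|\mathds 1_{\mathcal Y_k}\mathcal F_{\mathbf N}\mathds 1_{\mathcal X_k}\|=1$ by Theorem~\ref{thm:DFUP}.

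The bumps placed at lattice points must be at most lattice-sized, so that the dual envelope is bounded below on the whole unit cell. This is what the paper does in the dual form, Proposition~\ref{prop:General-CD}. It sets $\hat f=\sum_{\xi'\in\mathcal X}\mathcal F_{\mathbf N}u(\xi')\,\chi(\cdot-\xi')$ with $\chi\in C_c^\infty(B(0,1/4))$ and $\chi^\vee\ge 1$ on $[-10,10]^2$. Then $f(\mathbf x)=\sqrt{N_1N_2}\,u(\mathbf N\mathbf x)\,\chi^\vee(\mathbf x)$ exactly on $Y=\mathcal Y/\mathbf N$, and no $\varepsilon$-loss is needed on the $\mathbf X$ side, since $X_k+B(0,1/4)\subset \mathbf X_h+[-10,10]^2$.

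Even with the envelope fixed, the claim that restricting to $\mathbf Y+B^\alpha_{h^\alpha}$ ``recovers'' $\|\mathds 1_{\mathcal Y_k}\mathcal F_{\mathbf N}u\|$ is the heart of the proof, and you only assert it. You must bound point values at $1/\mathbf N$-separated samples by local $L^2$ mass, for a function band-limited exactly at the sampling scale. A purely local bound at radius $\sim 1/\mathbf N$ is false; you need a slightly larger radius and a global tail term. The paper's Lemma~\ref{lem:CD-pw-bound} writes $f=f*\omega_{\mathbf N}$ with $\hat\omega\in C_c^\infty((-2,2)^2)$ equal to $1$ on $(-1,1)^2$. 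The factor-$2$ oversampling makes the reproducing kernel Schwartz, and gives
\[
|f(\mathbf x)|^2\lesssim_{\alpha,m} N_1N_2\bigl(\|f\|^2_{L^2(B^\alpha_r(\mathbf x))}+(N_2r)^{-m}\|f\|^2_{L^2}\bigr).
\]
Summing over $Y$ with $|Y\cap B^\alpha_r(\mathbf x)|\lesssim (rN_2)^{1+1/\alpha}$, and taking $r=h^{\alpha(1-\epsilon)}$ and $m\gg\epsilon^{-1}$, yields \eqref{eq:CDgeneral}.

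That is where the $\varepsilon$-room belongs: on the $\mathbf Y$ side. There $Y_k+B^\alpha_r\subset\mathbf Y+B^\alpha_{2r}$ is covered by $O(h^{-\epsilon(1+\alpha)})$ translates of $\mathbf Y+B^\alpha_{h^\alpha}$, and translation invariance of the norm gives the $h^{-O(\varepsilon)}$ loss you anticipated. Note also that the paper points out that the corresponding inequality (33) in \cite{Cohen} omits this tail term. So ``exactly as in Cohen's Appendix B'' does not close this step by itself.
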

\begin{remark}
    The converse is generally not true; that is, the discrete FUP does not necessarily imply the continuous FUP.
\end{remark}

\begin{remark}
    The associated Cantor set $\mathbf{X}\subset \mathbb{R}^2$ (or $\mathbf{Y}$) is a Bedford--McMullen carpet; see Figure \ref{fig:aniscantor}. 
    A FUP for these sets with respect to the (isotropic) semiclassical Fourier transform $\mathcal{F}_h$ can be obtained using a one-dimensional FUP under the assumption that both $\mathcal A$ and $\mathcal B$ are not full for each row 
    (see Theorem \ref{thm:cfup} and \ref{thm:cfup-bm}). 
    However, here we involve a kind of FUP for the \textit{anisotropic} semiclassical Fourier transform. 
    After an anisotropic rescaling, the anisotropic semiclassical Fourier transform reduces to the standard Fourier transform.
    Indeed, we have
    \[
        \|\mathds{1}_{\mathbf{Y}+B_{h^\alpha}^\alpha}
        \mathcal{F}_h^{\operatorname{ani}}\mathds{1}_{\mathbf{X}+B_{h^\alpha}^\alpha}\|_{L^2\to L^2}
        =\|\mathds{1}_{\widetilde{\mathbf{Y}}+[-1,1]^2}
        \mathcal{F}\mathds{1}_{\mathbf{X}+B_{h^\alpha}^\alpha}\|_{L^2\to L^2}
    \]
    if we define the (anisotropic) rescaled set 
    \[
        \widetilde{\mathbf{Y}}=\{(x/h,y/h^\alpha):(x,y)\in \mathbf{Y}\}.
    \]
    The authors do not know how to prove this kind of FUP for rescaled sets.  
    Instead, we can prove the following \emph{reflected} version of the anisotropic FUP:
    \[
        \|\mathds{1}_{\mathbf{Y}+B_{h^\alpha}^\alpha}
        \mathcal{F}_h^{\operatorname{ani}}\mathds{1}_{\operatorname{Ref}(\mathbf{X}+B_{h^\alpha}^\alpha)}\|_{L^2\to L^2}\leq Ch^\beta
    \]
    under certain loose restrictions (orthogonal non-fullness) on the alphabets $\mathcal{A}$ and $\mathcal{B}$, 
    where $\operatorname{Ref}:(x,y) \in\mathbb{R}^2\mapsto (y,x)\in \mathbb{R}^2$ is the reflection across the diagonal. See Section 
    \ref{subsec:rescaled} and 
    Theorem \ref{thm:cfup-bm-ani}.
\end{remark}

Theorem \ref{thm:C-D} is a consequence of the following more general estimate.  We state in a dual version. Let $\mathcal{F}_h^{\operatorname{ani*}}$ and $\mathcal{F}_{\mathbf{N}}^*$ be the dual  operators.
\begin{proposition}[General implication]\label{prop:General-CD}
    For $\mathcal{X}, \mathcal{Y} \subset \mathbb{Z}_{\mathbf{N}}$, define in $\mathbb{R}^2$ the sets
    \[
    X = \mathcal{X} \subset [0,N_1)\times[0,N_2), \qquad Y = \mathcal{Y}/\mathbf{N} \subset [0,1)^2.
    \] 
    Then for all $r\in (10/N_2,1/10)$ and all $m>0$ we have 
    \begin{equation}\label{eq:CDgeneral}
        \|\mathds{1}_{\mathcal{Y}}\mathcal{F}_{\mathbf N}^*\mathds{1}_{\mathcal{X}}\|_{\ell^2\to\ell^2}
        \lesssim_{m,\alpha} (rN_2)^{\frac{1+\alpha}{2\alpha}}\,  \|\mathds{1}_{Y+B^\alpha_r}\mathcal{F}^* \mathds{1}_{X+B(0,1/4)}\|_{L^2\to L^2}+ N_2^{\frac{1+\alpha}{2\alpha}}(rN_2)^{-m}        \end{equation}
    where $\mathcal{F}f = \hat{f}$ is the standard Fourier transform on $L^2(\mathbb{R}^2)$.
\end{proposition}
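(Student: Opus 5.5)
We adapt the argument of \cite[Proposition 5.8]{dyatlov2018dolgopyat} and \cite[Appendix B]{Cohen}, which embeds the discrete problem into $L^2(\mathbb R^2)$ via bumps. Fix $\psi\in C_c^\infty(B(0,1/4))$ with $\hat\psi$ nonvanishing near the origin; concretely, take $\psi$ real and even with $\int\psi=1$, so that $\hat\psi(0)=1$. Given $u\in\ell^2_{\mathbf N}$ supported in $\mathcal X$, define
\[
f(\mathbf x)=\sum_{\mathbf l\in\mathcal X}u(\mathbf l)\,\psi(\mathbf x-\mathbf l)\in L^2(\mathbb R^2),
\]
with $\mathcal X\subset[0,N_1)\times[0,N_2)$. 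Then $\operatorname{supp} f\subset X+B(0,1/4)$ and $\|f\|_{L^2}\sim\|u\|_{\ell^2}$, since the translates are disjoint. Its Fourier transform is
\[
\mathcal F^*f(\boldsymbol\xi)=\hat\psi(\boldsymbol\xi)\,P(\boldsymbol\xi),\qquad P(\boldsymbol\xi)=\sum_{\mathbf l}u(\mathbf l)e^{2\pi i\mathbf l\cdot\boldsymbol\xi}.
\]
The function $P$ is $\mathbb Z^2$-periodic and satisfies $P(\mathbf j/\mathbf N)=\sqrt{N_1N_2}\,\mathcal F^*_{\mathbf N}u(\mathbf j)$.

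\textbf{Step 1 (sampling and averaging).} We must bound $\sum_{\mathbf j\in\mathcal Y}|P(\mathbf j/\mathbf N)|^2$ by an $L^2$ norm of $P$ on $Y+B^\alpha_r$. Replace point evaluation by local averaging at the anisotropic scale. Take a test function $\theta$ adapted to the rectangle $B^\alpha_r$, i.e.\ $\theta(\boldsymbol\xi)=\theta_0(\xi_1/r^{1/\alpha},\xi_2/r)$ normalized with $\int\theta=1$. The natural candidate is $P(\mathbf j/\mathbf N)\approx\int P(\mathbf j/\mathbf N+\boldsymbol\zeta)\theta(\boldsymbol\zeta)\,d\boldsymbol\zeta$, but this fails because $P$ oscillates at frequencies up to $\mathbf N$. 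So instead insert a phase correction: $P$ has spectrum in $\mathcal X\subset[0,\mathbf N)$, and we localize $u$ in physical space on boxes of size $\sim(rN_2)^{-1}\mathbf N$ (anisotropically) and demodulate each piece. The sharper route, following Dyatlov--Jin, is to write
\[
u(\mathbf l)=\int \check\theta(\ldots)\cdots,
\]
i.e.\ choose $\chi$ with $\hat\chi$ supported in $B^\alpha_r$ and $|\chi|\ge c$ on $[0,1]^2$ rescaled by $\mathbf N$. Then sampling becomes exact, and the error comes from the tails of $\chi$ outside $[0,N_1)\times[0,N_2)$.

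\textbf{Step 2 (Cauchy--Schwarz).} Each $|P(\mathbf j/\mathbf N)|^2$ is then bounded by $|B^\alpha_r|^{-1}\int_{\mathbf j/\mathbf N+B^\alpha_r}|\hat\psi P|^2$. Two counting facts control the overlaps. First, the rectangles $\mathbf j/\mathbf N+B^\alpha_r$ overlap at most $\sim r^{1/\alpha}N_1\cdot rN_2=(rN_2)^{(1+\alpha)/\alpha}$ times, using $N_1=N_2^{1/\alpha}$. Second, the normalization $N_1N_2$ enters. Together these give the prefactor $(rN_2)^{\frac{1+\alpha}{2\alpha}}$ after taking square roots. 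We also need $|\hat\psi|\gtrsim1$ on $[0,1]^2+B_r^\alpha$; since $\psi$ is supported in $B(0,1/4)$ we cannot simply take $\hat\psi(0)=1$ and stop. Instead we will use a $\mathbb Z^2$-periodization argument, or choose $\psi$ so that $\hat\psi$ is bounded below on $[-1,2]^2$, which is possible for small support.

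\textbf{Step 3 (error term).} The mismatch between the exact and averaged samples is controlled by Schwartz tails of $\check\theta$ evaluated at $\mathbf l\in[0,\mathbf N)$ relative to the scale $\mathbf N/(rN_2)$. This yields $(rN_2)^{-m}$ times the trivial bound $\|\mathcal F_{\mathbf N}^*\|\cdot\sqrt{N_1N_2}\lesssim N_2^{\frac{1+\alpha}{2\alpha}}$.

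The main obstacle is Step 1: the anisotropic uncertainty rescaling. We need a multiplier whose Fourier support lies in the anisotropic ball $B^\alpha_r$ yet which is nearly constant on the physical box $[0,N_1)\times[0,N_2)$. This is only possible because $r^{1/\alpha}N_1=(rN_2)^{1/\alpha}\gg1$ and $rN_2\gg1$, so the proposition's hypothesis $r>10/N_2$ is exactly what is needed. We will try a tensor product $\chi(\mathbf x)=\chi_0(x_1/N_1)\chi_0(x_2/N_2)$ modulated to the center, and accept rapidly decaying leakage.
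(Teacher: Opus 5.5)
Your setup ($f=\sum_{\mathbf l}u(\mathbf l)\psi(\cdot-\mathbf l)$, $\mathcal F^*f=\hat\psi P$, $P(\mathbf j/\mathbf N)=\sqrt{N_1N_2}\,\mathcal F_{\mathbf N}^*u(\mathbf j)$) matches the paper's. There is a genuine gap in the decisive step: bounding the point values $P(\mathbf j/\mathbf N)$ by the local $L^2$ mass on $\mathbf j/\mathbf N+B^\alpha_r$. You list three mechanisms but carry out none of them, and the inequality you actually use in Step 2 is false. The spectrum of $P$ fills the whole box $[0,N_1)\times[0,N_2)$, so $P$ is locally constant only at scale $1/\mathbf N$, not at scale $r$. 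For example, take $u\equiv 1$. Then $|P(0)|^2=(N_1N_2)^2$, while $\int_{B^\alpha_r}|\hat\psi P|^2\lesssim\int_{[-1/2,1/2]^2}|P|^2=N_1N_2$. So $|P(0)|^2\le|B^\alpha_r|^{-1}\int_{B^\alpha_r}|\hat\psi P|^2$ fails by a factor $\gtrsim(rN_2)^{1+1/\alpha}$. The correct constant is $\sim N_1N_2$, and the prefactor in the statement comes from exactly this: $(N_1N_2)^{-1}\cdot N_1N_2\cdot(rN_2)^{1+1/\alpha}$, i.e.\ DFT normalization, local-constancy constant, and overlap count. With your constant $|B^\alpha_r|^{-1}\sim r^{-1-1/\alpha}$, the same bookkeeping gives $(N_1N_2)^{-1}r^{-1-1/\alpha}(rN_2)^{1+1/\alpha}=1$, so the prefactor you claim does not follow from your own steps. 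Step 3 is also inconsistent. A kernel $\check\theta$ adapted to $B^\alpha_r$ lives at physical scale $r^{-1/\alpha}\times r^{-1}$, much smaller than $N_1\times N_2$. Hence $1-\check\theta(\mathbf l)$ is of size one for most $\mathbf l$ in the box, and the mismatch between exact and averaged samples is not $O((rN_2)^{-m})$.

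The missing idea is the reproducing formula at the dual scale, which is what the paper uses. The function $g:=\mathcal F^*f=\hat\psi P$ has Fourier transform $f$, supported in $[-N_1,N_1]\times[-N_2,N_2]$. Hence $g=g*\omega_{\mathbf N}$ with $\omega_{\mathbf N}=N_1N_2\,\omega(\mathbf N\,\cdot)$ and $\hat\omega=1$ on $[-1,1]^2$. Split the convolution at $\mathbf x$ into $B^\alpha_r(\mathbf x)$ and its complement, and apply Cauchy--Schwarz. Use $\|\omega_{\mathbf N}\|^2_{L^2}\sim N_1N_2$ and $\|\omega_{\mathbf N}\|^2_{L^2((B^\alpha_r)^{\mathsf c})}\lesssim_m N_1N_2(rN_2)^{-m}$, which holds because $\mathbf N\cdot B^\alpha_r=B^\alpha_{rN_2}$. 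This gives $|g(\mathbf x)|^2\lesssim N_1N_2\bigl(\|g\|^2_{L^2(B^\alpha_r(\mathbf x))}+(rN_2)^{-m}\|g\|^2_{L^2}\bigr)$. Apply this at $\mathbf x\in Y$; there you only need $|\hat\psi|\ge 1$ on $[0,1]^2$. Then sum using $|Y\cap B^\alpha_r(\mathbf x)|\lesssim(rN_2)^{1+1/\alpha}$ and $|Y|\le N_1N_2=N_2^{1+1/\alpha}$, which yields both terms of the statement.

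Your ``Dyatlov--Jin'' route can also be completed, but only with a step you omit. Take $\chi(\mathbf x)=\phi(\mathbf x/\mathbf N)$, with $\hat\phi$ supported in a small ball and $|\phi|\ge 1/2$ on $[0,1]^2$. Write $u=\chi v$ with $v=u/\chi$, which is still supported in $\mathcal X$. Then $P_u(\xi)=\int\hat\chi(\zeta)P_v(\xi+\zeta)\,d\zeta$ holds exactly. Here $\operatorname{supp}\hat\chi\subset B^\alpha_r$ because $r>10/N_2$, and $\|\hat\chi\|^2_{L^2}=N_1N_2\|\hat\phi\|^2_{L^2}$. Cauchy--Schwarz, disjointness of the boxes $\mathbf j/\mathbf N+\operatorname{supp}\hat\chi$, and the continuous bound applied to $\sum_{\mathbf l}v(\mathbf l)\psi(\cdot-\mathbf l)$ then give the estimate with an $O(1)$ prefactor and no tail term. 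This is even stronger than the statement. Without the division by $\chi$, ``sampling becomes exact'' is not true. Also, tails of $\chi$ outside the box play no role, since $u$ lives in the box.
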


The proof of Proposition \ref{prop:General-CD} uses the following pointwise inequality \eqref{eq:modified-pwIneq},
which roughly says that a function with compact Fourier support is bounded pointwise by its local $L^2$ mass times the size of its Fourier support.
This follows from the classical (quantitative) uncertainty principle, which states that a function with compact Fourier support is locally constant on the dual scale, up to an error. 
We remark that the corresponding inequality \cite[(33)]{Cohen} in Cohen's paper is not true; it misses the error term.
We believe that the loss of a polynomial in $(rN_2)$ in \eqref{eq:CDgeneral} is inevitable, perhaps up to a logarithmic factor. 
However, this loss does not harm the proof of 
Theorem \ref{thm:C-D}.
\begin{lemma}[Pointwise estimate]\label{lem:CD-pw-bound}
    Let $f\in L^2(\mathbb{R}^2)$ with 
    $\operatorname{supp} \hat{f}\subset [-N_1,N_1]\times [-N_2,N_2]$. 
    Then for all $r\in (10/N_2,1/10)$, all $m>0$, and all $\mathbf{x}\in \mathbb{R}^2$, we have
    \begin{equation}\label{eq:modified-pwIneq}
         |f(\mathbf{x})|^2\lesssim_{\alpha,m} 
        N_1N_2\bigl(\|f\|^2_{L^2(B_r^\alpha(\mathbf{x}))}+(N_2r)^{-m}\|f\|_{L^2({B_r^{\alpha}(\mathbf x)}^{\mathsf c})}^2\bigr).
    \end{equation}
\end{lemma}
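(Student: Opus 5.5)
The plan is to use a Schwartz reproducing kernel adapted to the anisotropic frequency box. Fix $\psi\in\mathcal S(\mathbb R^2)$ with $\hat\psi\equiv 1$ on $[-1,1]^2$. Set
\[
\psi_{\mathbf N}(\mathbf y)=N_1N_2\,\psi(N_1y_1,N_2y_2).
\]
Then $\hat\psi_{\mathbf N}(\xi)=\hat\psi(\xi_1/N_1,\xi_2/N_2)\equiv1$ on $\operatorname{supp}\hat f$, so $f=f*\psi_{\mathbf N}$. This gives
\[
f(\mathbf x)=\int_{\mathbb R^2} f(\mathbf x-\mathbf y)\,\psi_{\mathbf N}(\mathbf y)\,d\mathbf y .
\]
We split the integral into $\mathbf y\in B_r^\alpha$ and $\mathbf y\notin B_r^\alpha$. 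Note that $\mathbf x-\mathbf y\in B^\alpha_r(\mathbf x)$ exactly when $\mathbf y\in B^\alpha_r=B^\alpha_r(0)$.

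For the near part, Cauchy--Schwarz gives
\[
\Bigl|\int_{B_r^\alpha}f(\mathbf x-\mathbf y)\psi_{\mathbf N}(\mathbf y)\,d\mathbf y\Bigr|^2\le \|f\|^2_{L^2(B^\alpha_r(\mathbf x))}\,\|\psi_{\mathbf N}\|_{L^2}^2,
\]
and $\|\psi_{\mathbf N}\|_{L^2}^2=N_1N_2\|\psi\|_{L^2}^2$. This produces the first term with the factor $N_1N_2$.

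For the far part, Cauchy--Schwarz gives
\[
\|f\|^2_{L^2(B^\alpha_r(\mathbf x)^{\mathsf c})}\int_{(B_r^\alpha)^{\mathsf c}}|\psi_{\mathbf N}|^2 .
\]
After the change of variables $\mathbf z=(N_1y_1,N_2y_2)$, the last integral equals
\[
N_1N_2\int_{E}|\psi(\mathbf z)|^2\,d\mathbf z,\qquad E=\{|z_1|\ge N_1 r^{1/\alpha}\}\cup\{|z_2|\ge N_2 r\}.
\]
Since $N_1=N_2^{1/\alpha}$, we have $N_1r^{1/\alpha}=(N_2r)^{1/\alpha}\ge N_2r$ once $N_2r\ge 10>1$. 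So on $E$ we have $|\mathbf z|\ge N_2 r$, and Schwartz decay $|\psi(\mathbf z)|\lesssim_m\langle \mathbf z\rangle^{-m-2}$ gives
\[
\int_E|\psi|^2\lesssim_m (N_2r)^{-m}.
\]
Combining the two parts with $(a+b)^2\le 2a^2+2b^2$ yields \eqref{eq:modified-pwIneq}.

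The only step requiring care is the tail estimate: we must check that the anisotropic rescaling maps the complement of the $\alpha$-ball to a region whose Euclidean distance from the origin is at least $N_2r$. This is where the exponent relation $N_2=N_1^\alpha$ and the hypothesis $r>10/N_2$ are used. The upper constraint $r<1/10$ is not needed for this argument. The implied constants depend on $\alpha$ only through the choice of $\psi$ and this rescaling.
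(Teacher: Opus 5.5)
Your proof is correct and follows the paper's argument essentially step for step: reproduce $f$ by convolution with an anisotropically rescaled Schwartz kernel whose Fourier transform is $1$ on the frequency box, split the integral into $B_r^\alpha$ and its complement, apply Cauchy--Schwarz, and combine $\|\psi_{\mathbf N}\|_{L^2}^2= N_1N_2\|\psi\|_{L^2}^2$ with Schwartz tail decay. The only difference is cosmetic: you bound the tail by noting that the rescaled complement of $B_r^\alpha$ lies outside the Euclidean ball of radius $N_2r$, since $N_1r^{1/\alpha}=(N_2r)^{1/\alpha}\ge N_2r$, whereas the paper sums over dyadic shells; both give the required $(N_2r)^{-m}$ factor.
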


\begin{remark}
    In the application to the proof of Theorem \ref{thm:C-D}, we will choose $r\sim N_2^{-1+\epsilon}$ for an arbitrarily small $\epsilon$ with $0<\epsilon\sim \beta-\beta'\ll 1$, and an arbitrarily large integer $m$ such that $m\gg \epsilon^{-1}$.
\end{remark}

\begin{proof}    
The key observation is that a function with compact Fourier support is locally constant at the inverse scale of the Fourier support. 
Quantitatively, we choose $\omega\in C^\infty(\mathbb{R}^2)$ 
to be a Schwartz function such that $\hat{\omega}$ is a bump function in $C_c^\infty((-2,2)^2)$, 
and $\hat{\omega}=1$ on $(-1,1)^2$. Then for a function $f\in L^2(\mathbb{R}^2)$ with $\hat{f}$ supported in $[-N_1,N_1]\times [-N_2,N_2]$ 
we have by the Fourier inversion formula
\[
f = f * \omega_{\mathbf N}, \qquad \omega_{\mathbf N}(\mathbf x) := N_1 N_2 \, \omega(\mathbf N \mathbf x).
\]
For every $r \in (10/N_2, 1/10)$, we split the convolution into two parts:
\[
f(\mathbf{x}) = \left( \int_{d_{\alpha,\mathbb{R}^2}(\mathbf{x},\mathbf{y})\le r} 
+ \int_{d_{\alpha,\mathbb{R}^2}(\mathbf{x},\mathbf{y}) > r} \right)
f(\mathbf{y})\, \omega_{\mathbf N}(\mathbf{x}-\mathbf{y}) \, d\mathbf{y}.
\]
By the Cauchy–Schwarz inequality we obtain
\[
|f(\mathbf{x})| \lesssim \|f\|_{L^2(B_r^\alpha(\mathbf{x}))}\|\omega_{\mathbf N}\|_{L^2(B_r^{\alpha})}
+ \|f\|_{L^2({B_r^{\alpha}(\mathbf x)}^{\mathsf c})}
\|\omega_{\mathbf N}\|_{L^2({B_r^{\alpha}}^{\mathsf c})}.
\]   
By Plancherel's theorem, it is easy to verify
\[
\|\omega_{\mathbf N}\|_{L^2} = (N_1 N_2)^{1/2} \|\omega\|_{L^2} \lesssim (N_1 N_2)^{1/2}.
\]
This bounds the first term. For the second term, we need a decay estimate for $\omega_{\mathbf N}$. 
Using the Schwartz decay of $\omega$ and a dyadic spherical shell decomposition, for arbitrarily large $m$ the integral of $\omega_{\mathbf N}$ outside the ball $B_r^\alpha$ can be estimated as
\[
\begin{aligned}
\|\omega_{\mathbf N}\|_{L^2({B_r^{\alpha}}^{\mathsf c})}^2
&= N_1 N_2 \int_{(B_{N_2 r}^\alpha)^{\mathsf c}} |\omega(\mathbf{y})|^2 \, d\mathbf{y}\\
&\lesssim_{\alpha} N_1 N_2 \sum_{k\ge 0} |B_{2^k N_2 r}^\alpha| \sup_{\mathbf{y}\in B_{2^k N_2 r}\setminus B_{2^{k-1} N_2 r}} 
|\omega(\mathbf{y})|^2\\
&\lesssim_{\alpha,m} N_1 N_2 \sum_{k\ge 0} 2^{-k(m-1-1/\alpha)} (N_2 r)^{-m+1+1/\alpha}\\
&\lesssim_{\alpha,m} N_1 N_2 (N_2 r)^{-m+1+1/\alpha}.
\end{aligned}
\]
The last inequality holds provided that $m > 1+1/\alpha$.
This completes the proof.
\end{proof}

\begin{proof}[Proof of Proposition \ref{prop:General-CD}] 

Now we are ready to prove the general control principle for discrete FUP using continuous FUP. This proof is a modification of \cite[Appendix B.4]{Cohen}, adapted to the anisotropic setting. We want to use Lemma \ref{lem:CD-pw-bound}, so we need to construct a function $f\in L^2(\mathbb{R}^2)$ with $\hat{f}$ supported near $\mathcal X$ in $[-N_1,N_1]\times [-N_2,N_2]$, which corresponds well to the discrete function whose Fourier transform is supported on $\mathcal X$.

 Let $u:{\mathbb Z_{\mathbf N}}\to \mathbb{C}$ satisfy $\operatorname{supp}\mathcal{F}_{\mathbf N} u\subset \mathcal{X}$ and $\|u\|_{\ell^2}=1$. We construct an auxiliary function $f\in L^2(\mathbb{R}^2)$ based on $u$ as follows. 
Let $\chi\in C_c^\infty(B(0,1/4))$ satisfy
\begin{equation}\label{eq:36 in Cohen}
\chi^\vee (\mathbf{x})\geq 1,\quad \mathbf{x}\in [-10,10]^2.
\end{equation}
Define
\[
\hat{f}(\mathbf{\xi})=\sum_{\mathbf{\xi}'\in \mathcal X} 
\mathcal{F}_{\mathbf N}u(\mathbf{\xi}')\, 
\chi(\mathbf{\xi}-\mathbf{\xi}') \qquad \forall\, \xi \in \mathbb R^2.
\]
Since $\chi$ has small support, essentially only one $\xi'$ in $\mathcal X$ takes effect in the sum. We note that $\hat{f}$ is supported in $[-N_1,N_1]\times [-N_2,N_2]$. 
By the unitarity of the Fourier transform,
\begin{equation}\label{eq:f L2 bounded by u l2}
\|f\|_{L^2}^2\lesssim \|u\|_{\ell^2}^2=1.
\end{equation}
For $\mathbf{x}\in Y$, the inverse Fourier transform on $\mathbb R^2$ and $\mathbb Z_{\mathbf N}$ gives
\[
f(\mathbf{x})=\sum_{\xi' \in \mathcal{X}}
\mathcal{F}_{\mathbf N}u(\xi') e^{2\pi i \mathbf{x} \cdot \xi'} \chi^\vee(\mathbf{x})= \sqrt{N_1N_2}\, u(\mathbf N \mathbf x) \,\chi^\vee(\mathbf{x}).
\]           
Using the pointwise bound \eqref{eq:36 in Cohen}, this implies
\[\|u\mathds{1}_{\mathcal{Y}}\|_{\ell^2}^2 \lesssim 
(N_1N_2)^{-1} \sum_{\mathbf{x} \in Y} |f(\mathbf{x})|^2.\]
By Lemma \ref{lem:CD-pw-bound}, for every $r\in (10/N_2,1/10)$ and every arbitrarily large $m\in \mathbb N$, we have the pointwise bound \eqref{eq:modified-pwIneq} for $f$. Summing over $\mathbf{x}\in Y$ yields
\begin{align}
\|u\mathds{1}_{\mathcal{Y}}\|_{\ell^2}^2 &\lesssim \sum_{\mathbf{x}\in Y} \|f\|^2_{L^2(B_r^\alpha(\mathbf{x}))}+ (N_2r)^{-m} \sum_{\mathbf{x}\in Y} \|f\|_{L^2({B_r^{\alpha}(\mathbf x)}^{\mathsf c})}^2\nonumber\\
&\lesssim \max_{\mathbf{x}\in \mathbb{R}^2}|B_r^\alpha(\mathbf x)\cap Y|\cdot\|f\mathds{1}_{Y+B_r^\alpha}\|_{L^2}^2+|Y|
(N_2r)^{-m}\|f\|_{L^2}^2\nonumber\\
&\lesssim (rN_2)^{1+1/\alpha}\|f\mathds{1}_{Y+B_r^\alpha}\|_{L^2}^2+
N_2^{1+1/\alpha}(N_2r)^{-m}\|f\|_{L^2}^2 \label{eq:u-bound-by-f}
\end{align}
where in the last line we use the fact that $Y$ is $1/\mathbf N$-separated and
\[
|Y\cap B^\alpha_{r}(\mathbf{x})| \lesssim_{\alpha} (rN_2)^{1+1/\alpha}.
\]    
Since $\operatorname{supp} \hat{f}\subset X+B(0,1/4)$, combining \eqref{eq:f L2 bounded by u l2} and \eqref{eq:u-bound-by-f} we obtain
\[
\begin{aligned}
\|u\mathds{1}_{\mathcal{Y}}\|_{\ell^2}^2 
&\lesssim_{\alpha,m} (rN_2)^{1+1/\alpha}\, \|\mathds{1}_{Y+B^\alpha_r}\mathcal{F}^*\mathds{1}_{X+B(0,1/4)}\|_{L^2(\mathbb{R}^2)\to L^2(\mathbb{R}^2)}^2 
+ N_2^{1+1/\alpha}(N_2r)^{-m},\\
\|u\mathds{1}_{\mathcal{Y}}\|_{\ell^2}
&\lesssim_{\alpha,m} (rN_2)^{\frac{1+\alpha}{2\alpha}}\, \|\mathds{1}_{Y+B^\alpha_r}\mathcal{F}^*\mathds{1}_{X+B(0,1/4)}\|_{L^2(\mathbb{R}^2)\to L^2(\mathbb{R}^2)}
+ N_2^{\frac{1+\alpha}{2\alpha}}(N_2r)^{-m}.
\end{aligned}
\]
This yields the general bound \eqref{eq:CDgeneral} as desired.
\end{proof}

The general result yields the discrete FUP from the continuous FUP estimates. Note that estimates \eqref{eq:ani-cont-fup} and \eqref{eq:discrete-fup-from-cfup} in Theorem \ref{thm:C-D} are equivalent to their dual forms for the dual operators $\mathcal{F}_h^{\operatorname{ani*}}$ and $\mathcal{F}_{\mathbf N}^*$.

\begin{proof}[Proof of Theorem \ref{thm:C-D}]

 We define the unitary map $\Phi^\alpha_h:L^2(\mathbb{R}^2)\to L^2(\mathbb{R}^2)$ by
    \[
        \Phi^\alpha_h f(x_1,x_2)=h^{-(1+\alpha)/2} f(x/h,y/h^\alpha).
    \]
    Then we have
    \[
        \begin{aligned}
            \|\mathds{1}_{\mathbf{Y}+B_{h^\alpha}^\alpha}\mathcal{F}_h^{\operatorname{ani*}}\mathds{1}_{\mathbf{X}+B_{h^\alpha}^\alpha}\|_{L^2\to L^2}
            &=\|\mathds{1}_{\mathbf{Y}+B_{h^\alpha}^\alpha}\mathcal{F}_h^{\operatorname{ani*}}\mathds{1}_{\mathbf{X}+B_{h^\alpha}^\alpha}
             \Phi^\alpha_h\|_{L^2\to L^2}\\
            &=\|\mathds{1}_{\mathbf{Y}+B_{h^\alpha}^\alpha}\mathcal{F}_h^{\operatorname{ani*}}\Phi^\alpha_h\,
            \mathds{1}_{\tilde{\mathbf{X}}_h}\|_{L^2\to L^2}\\
            &=\|\mathds{1}_{\mathbf{Y}+B_{h^\alpha}^\alpha}
            \mathcal{F}^*\mathds{1}_{\tilde{\mathbf{X}}_h}\|_{L^2\to L^2},
        \end{aligned}
    \]
    where $\mathcal{F}$ is the usual Fourier transform on $\mathbb R^2$, and $\tilde{\mathbf{X}}_h$ is a dilation and translation of $\mathbf X$ defined by
    \[
        \tilde{\mathbf{X}}_h=\mathbf X_h+B_1^{\alpha}, \quad \mathbf X_h={\mathbf X}/(h,h^\alpha).
    \]    
By the duality of translation and modulation in the dual spaces of position and frequency,
    our assumption of continuous FUP \eqref{eq:ani-cont-fup} implies that for 
    any $\mathbf{y}\in \mathbb{R}^2$,
    \begin{equation}\label{eq:ffup}
        \|\mathds{1}_{\mathbf{Y}+y+B_{h^\alpha}^\alpha}
            \mathcal{F}^*\mathds{1}_{{\mathbf{X}}_h+[-10,10]^2}\|_{L^2\to L^2}\leq Ch^\beta.
    \end{equation}

    As in Proposition \ref{prop:General-CD}, denote in $\mathbb R^2$ the corresponding discrete sets 
    \[
        X_k=\mathcal X_k,\quad Y_k=\mathcal Y_k/\mathbf N \subset \mathbb{R}^2.
    \]  
    Let $h:=1/N_1=M_1^{-k}\to 0$ as $k\to \infty$. Recall the set
    \[
        \mathbf{X}=\left\{\left(\sum_{j\geq 1} a_jM_1^{-j},\,\sum_{j\geq 1} b_jM_2^{-j}\right):
        (a_j,b_j)\in \mathcal{A}\right\}.
    \]
Thus we obtain
        \begin{align}\label{eq:XkboundbyXh}
            X_k+B(0,1/4)&\subset 
            \left\{\left(\sum_{j=1}^k a_jM_1^{k-j},\sum_{j=1}^k 
        b_jM_2^{k-j}\right):
        (a_j,b_j)\in \mathcal{A}\right\}+[-5,5]^2\nonumber\\
        &\subset 
        {X}_h+[-10,10]^2.
        \end{align}
    On the other hand, 
    we can choose $r=h^{\alpha(1-\epsilon)} > N_2^{-1}$ for any small $\epsilon>0$, and thus 
    \[
        Y_k+B_r^\alpha
        \subset \mathbf{Y}+B^\alpha_{1/N_2} + B_r^\alpha
        \subset \mathbf{Y}+B_{2r}^\alpha.
    \]
    We can choose a set $W\subset \mathbb{R}^2$ with $|W|\leq 100h^{-\epsilon (1+\alpha)}$ so that 
    \[
        \mathbf{Y}+B_{2r}^\alpha \subset \bigcup_{y\in W} \bigl(\mathbf{Y}+y+B_{h^\alpha}^\alpha\bigr),
    \]
    which gives 
  \begin{equation}\label{eq:YkboundbyY}
        Y_k+B_r^\alpha
        \subset \bigcup_{y\in W} \bigl(\mathbf{Y}+y+B_{h^\alpha}^\alpha\bigr).
    \end{equation}
    Thus \eqref{eq:ffup} together with \eqref{eq:XkboundbyXh} and \eqref{eq:YkboundbyY} implies that 
    \[
        \|\mathds{1}_{Y_k+B^\alpha_r}\mathcal{F}^*\mathds{1}_{X_k+B(0,1/4)}\|_{L^2\to L^2}
        \leq Ch^{\beta-\epsilon (1+\alpha)}
    \]
    for any small $\epsilon>0$.
    Therefore, the desired discrete FUP \eqref{eq:discrete-fup-from-cfup} with $\beta-\beta'\sim \epsilon$ follows from 
    \eqref{eq:CDgeneral} provided we choose $m\gg \epsilon^{-1}$ large enough, since
    \begin{align*}   
        \|\mathds{1}_{\mathcal{Y}_k}\mathcal{F}_{\mathbf N}^*\mathds{1}_{\mathcal{X}_k}\|_{\ell^2\to\ell^2}
        &\lesssim_{m,\alpha} (rN_2)^{\frac{1+\alpha}{2\alpha}} \|\mathds{1}_{Y_k+B^\alpha_r}\mathcal{F}^* \mathds{1}_{X_k+B(0,1/4)}\|_{L^2\to L^2}+   N_2^{\frac{1+\alpha}{2\alpha}}(N_2r)^{-m}  \\
        &\lesssim_\alpha h^{\beta-\epsilon\, \frac{3(1+\alpha)}2} +h^{-\epsilon  \alpha m-\frac{1+\alpha}2}     
        \lesssim_{\beta', \beta,\alpha} h^{\beta'}=M_1^{-k\beta'}.
    \end{align*}

\end{proof}

\subsection{Anisotropic continuous FUP for $\mathcal F_h$}\label{subsec:cfup-isoFourierTransform}
We use the method in \cite{dyatlov2024semiclassical} to prove an FUP for \emph{anisotropic porous} sets. We believe this property is a feature of a wide class of Bedford–McMullen carpets (see Example \ref{ex:BMcarpet-porous}).

\begin{definition}[Anisotropic porosity]\label{def:porous}
    Let $\alpha\in(0,1)$, $\gamma\in (0,1)$. Let $v_1, v_2\in \mathbb R^2$ be two linearly independent vectors. 
    For $X\subset \mathbb{R}^2$, we say $X$ is 
    \emph{$\alpha$-anisotropic $\gamma$-porous of scale $[l_1,l_2]$ along directions $(v_1, v_2)$} 
    if for each segment $I\subset \mathbb{R}^2$ of length $l\in [l_1,l_2]$ that is parallel to $v_1$, 
    we can always find a point $x\in I$ such that $X$ does not intersect the following parallelogram 
    centered at $x$:
    \[
    \operatorname{PG}_{\gamma,\alpha,l}(x, v_1,v_2) = \left\{x+t_1 v_1 + t_2 v_2 : t_1\in \left[-\frac{\gamma l}{2},\frac{\gamma l}{2}\right],\; t_2\in \left[-\frac{\gamma l^\alpha}{2},\frac{\gamma l^\alpha}{2}\right]\right\}.
    \]

\end{definition}

\begin{remark}
The definition applies to \emph{any positive} anisotropic parameter $\alpha$. However, as before, we always let $\alpha\in(0,1)$ and assume the $\alpha$-anisotropic porous set is 
of \emph{microscopic} scale, that is, $l_2\lesssim 1$ and $l_1\ll 1$;
simultaneously we will assume the $1/\alpha$-anisotropic porous set 
is of \emph{macroscopic} scale, that is, 
$l_1\gtrsim 1$ and $l_2\gg 1$. This assumption guarantees that 
the parallelogram in the definition has a larger extent along $v_2$ than along $v_1$.
\end{remark}

\begin{figure}[htbp]
    \centering
    \includegraphics[width=0.8\textwidth]{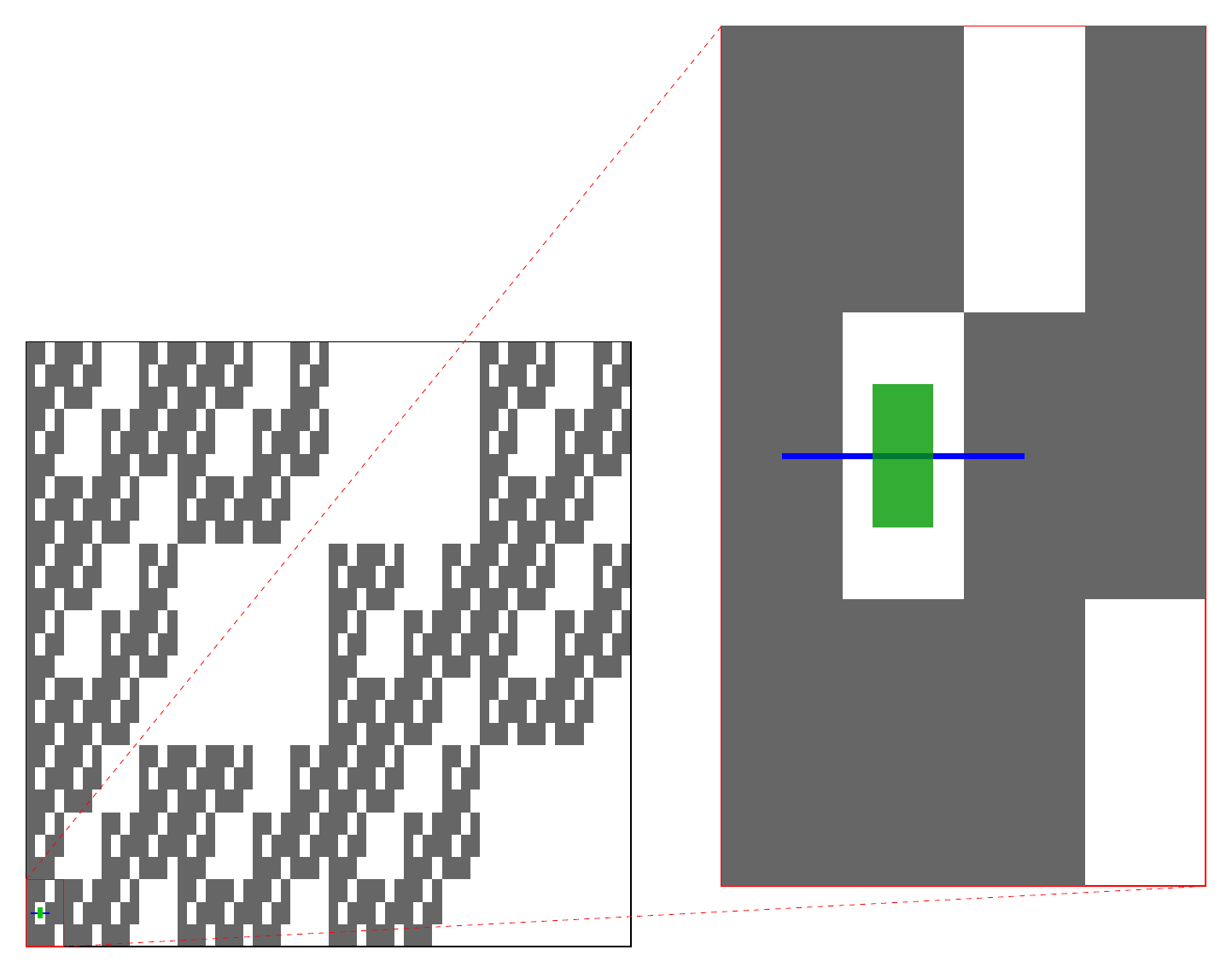}
    \caption{Anisotropic porosity for Bedford-McMullen carpets 
    with $M_1=4,M_2=3$ and $\mathcal{A}=\mathbb{Z}_{M_1}\times \mathbb{Z}_{M_2}
    \setminus\{(3,0),(1,1),(2,2)\}$. The whole blue segment $I$  is of scale $\sim 4^{-2}$ with a sub-segment $I'$ 
lying in the green rectangle of scale $\sim 4^{-3}\times 3^{-3}$.} 
    \label{fig:Anisotropic_porosity_for_BM_carpet}
\end{figure}

\begin{example}[Anisotropic porous BM carpet]\label{ex:BMcarpet-porous}
Let $\mathbf{X}\subset \mathbb{R}^2$ be the Bedford-McMullen carpet iterated from base $\mathbf M$ and alphabet $\mathcal{A}\subset \mathbb Z_{\mathbf M}$,  where $M_2=M_1^\alpha$ for some $\alpha\in(0,1)$.  
Assume that \emph{$\mathcal{A}$ is not full in each row;} that is, for each $j_2\in \mathbb Z_{M_2}$ there exists some $j_1\in \mathbb Z_{M_1}$ such that $(j_1,j_2)\notin \mathcal{A}$. 
Then $\mathbf{X}$ is $\alpha$-anisotropic $\gamma$-porous of scale $(0,1]$ along the directions $(e_1,e_2)$, where $e_1=(1,0)$ and $e_2=(0,1)$ 
are the standard basis vectors of $\mathbb{R}^2$, and $\gamma=\gamma(M_1,M_2)>0$ is a constant depending only on $M_1$ and $M_2$. 
More concretely, consider any segment $I$ parallel to $e_1$ with length $|I|\in (M_1^{-k_0-1}, M_1^{-k_0}]$ 
for some $k_0\in\mathbb N$ and with height $y_0\in(0,1]$. 
Then we can find some integer $a_0$ and define a sub-segment $I'$ as 
\[
    I':=[a_0M_1^{-k_0-2},(a_0+1)M_1^{-k_0-2}]\times \{y_0\}
\]
so that $I'\subset I$. We can also find the integer $b_0$ so that 
\[
    y_0\in [b_0M_1^{-k_0-2},(b_0+1)M_2^{-k_0-2})
\]
After a rescaling $\Phi:(x,y)\mapsto (M_1^{k_0+2},M_2^{k_0+2}y)$, we know 
the set 
\begin{equation}\label{eq:The rescaled set in ex:BMcarpet-porous}
    \Phi_k([a_0M_1^{-k_0-2},(a_0+1)M_1^{-k_0-2}]\times [(b_0-1)M_1^{-k_0-2},(b_0+1)M_2^{-k_0-2}]\cap \mathbf{X})
\end{equation}
is contained in a $1\times 3$ rectangle. This $1\times 3$ rectangle consists of 
three $1\times 1$ cubes, and the intersection with 
the interior of each cube and the set is either empty, or 
equals to $\mathbf{X}$ after some translation. 
By the ``no full row" assumption, it is easy to find a cube $Q$ of 
length $\sim 1$ with the same center as that of $\Phi(I')$, satisfying 
\[  
    \begin{aligned}
        Q\subset \Phi_k([a_0M_1^{-k_0-2},(a_0+1)M_1^{-k_0-2}]\times [(b_0-1)M_1^{-k_0-2},(b_0+1)M_2^{-k_0-2}])
    \end{aligned}
\]
and $Q$ has empty intersection with the set defined in \eqref{eq:The rescaled set in ex:BMcarpet-porous}.
Thus $\Phi_k^{-1}(Q)$ is the desired parallelogram 
in the $\alpha$-anisotropic definition. See Figure \ref{fig:Anisotropic_porosity_for_BM_carpet}.
\end{example}

We also list an interesting example of rescaled carpet which is anisotropic porous in large scale along $(e_2,e_1)$. This will be used in next subsection. 

\begin{example}[Rescaled carpet is macroscopic porous]\label{ex:rescaled BM-carpet 1/alpha porous}
    Let $\mathbf{Y}\subset \mathbb{R}^2$ be the Bedford-McMullen carpet iterated from 
    base $\mathbf M$ and alphabet $\mathcal{B}\subset \mathbb Z_{\mathbf M}$,  where $M_2=M_1^\alpha$ for some $\alpha\in(0,1)$.  
    Assume now that \emph{$\mathcal{B}$ is not full in each column;} 
    that is, for each $j_1\in \mathbb Z_{M_1}$ there exists some $j_2\in \mathbb Z_{M_2}$ such that $(j_1,j_2)\notin \mathcal{B}$.
    Let $\tilde{\mathbf{Y}}\subset \mathbb{R}^2$ defined as 
    \[
        \tilde{\mathbf{Y}}=\{(x/h,y/h^\alpha):(x,y)\in \mathbf{Y}\}
    \]
    Then $\tilde{\mathbf Y}$ is $1/\alpha$ anisotropic $\gamma$-porous of scale $[1,+\infty)$ 
    along the directions $(e_2,e_1)$, where $e_1=(1,0)$ and $e_2=(0,1)$ are the standard basis vectors of $\mathbb{R}^2$, 
    and $\gamma=\gamma(M_1,M_2)>0$ is a constant depending only on $M_1$ and $M_2$. 
    Actually, by the argument in the previous example, there exists 
    some $\gamma>0$ so that, given any segment $I$ with length 
    $l\in [h^{\alpha},+\infty)$ parallel to $e_2$, we know 
    that we can find some $I'\subset I$ of length $\gamma l$ such that the parallelogram
    \[
        \{\mathbf{p}+t\gamma l^{1/\alpha}e_1:\mathbf{p}\in I', t\in [-1/2,1/2]\}
    \]
    has empty intersection with $\mathbf{Y}$. After the rescaling $(x,y)\mapsto (x/h,y/h^\alpha)$, 
    we know this parallelogram becomes another parallelogram of length $\left(\frac{l}{h^\alpha}\right)^{1/\alpha}$
    along $e_1$ and of length $l/h^\alpha$ along $e_2$, which has non-empty intersection with $\tilde{\mathbf{Y}}$.
\end{example}

\begin{remark} 
The anisotropic porosity we introduce, either macroscopic or microscopic,  are  \emph{line} porosities, 
    which are different from Cohen's line porosity and stronger than ball porosity.
    Actually, there is no containment relation between our anisotropic line porosities and Cohen's. 
    Some properties of such anisotropic porous sets are listed in Appendix \ref{app:ani-porous-transf} whose proofs are nearly identical to those for Cohen's line porosity (see \cite[Appendix A.2]{MR4927737}). 
    In particular, we also show in the Appendix \ref{app:porositys} that when the alphabet $\mathcal A$ is nonempty for each row, 
    Cohen's line porosity does not hold for the associated Bedford--McMullen carpet $\mathbf X$; 
    see (a) of Proposition \ref{prop:Failure of line/ball porosity for BM carpet}. 
    Thus, we cannot apply Cohen's FUP \cite[Theorem 1.2]{MR4927737} directly to these sets. We also consider ball porosity and rescaled carpets there.
\end{remark}

\begin{remark}
The anisotropic porosity 
    can be generalized to higher dimensions: one replaces $v_2$ with a hyperplane of 
    codimension-1 transversal to $v_1$, and the parallelogram with a cylinder.
 \end{remark}

The main result of this subsection is the following anisotropic continuous FUP. 
\begin{theorem}[CFUP]\label{thm:cfup}
Let $\gamma\in(0,1)$, $\alpha,\alpha'\in(0,1)$, and $0<h<h(\gamma,\alpha,\alpha')\ll 1$, and let $(v_1,v_2)$ and $(v_1',v_2')$ be two pairs of linearly independent vectors in $\mathbb{R}^2$.
Let $X$ be an $\alpha$-anisotropic $\gamma$-porous set of scale $[h,1]$ along the direction $({v}_1,{v}_2)$, and let $Y$ be an $\alpha'$-anisotropic $\gamma$-porous set of scale $[h,1]$ along the direction $({v}_1',{v}_2')$, such that ${v}_1'\perp {v}_2$ and ${v}_2'\perp {v}_1$. 
Then there exists $\beta>0$ such that 
\begin{equation}\label{eq:cfup}
\|\mathds{1}_X \mathcal{F}_h \mathds{1}_Y\|_{L^2\to L^2}\leq C h^\beta,
\end{equation}
where $\mathcal{F}_h$ is the (isotropic) semiclassical Fourier transform defined by \eqref{eq:semi-F}.
\end{theorem}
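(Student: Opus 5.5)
Our plan follows Dyatlov--Jézéquel--Jin \cite{dyatlov2024semiclassical}: reduce the two-dimensional estimate to a one-dimensional FUP along the direction $v_1$ (on the $X$ side) and $v_1'$ (on the $Y$ side). The two directions are paired through the perpendicularity relations $v_1'\perp v_2$, $v_2'\perp v_1$.

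\emph{Step 1 (normalization).} A linear change of variables $A$ on position space acts on $\mathcal F_h$ by conjugation with $A^{-T}$ on frequency space. This is a metaplectic transformation and is unitary up to a Jacobian factor. We choose $A$ with $Av_1=e_1$ and $Av_2=e_2$. Then $A^{-T}$ maps $v_1'$ (perpendicular to $v_2$) to a multiple of $e_1$, and $v_2'$ to a multiple of $e_2$. After this normalization, both $X$ and $Y$ are anisotropically porous along $(e_1,e_2)$, with porosity constant changed by a factor depending only on the angles. Porosity is stable under bounded linear maps; this is the appendix property of Appendix~\ref{app:ani-porous-transf}.

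\emph{Step 2 (decomposition into strips).} We fix an intermediate scale and cover $X$ by horizontal strips $S_j=\mathbb R\times[jh^{\alpha\theta},(j+1)h^{\alpha\theta})$, and $Y$ by horizontal strips $T_k$ of width $h^{\alpha'\theta'}$ in $\eta$. In each strip $S_j$, the set $X\cap S_j$ projects to the $x$-axis as a set $X_j$. Anisotropic porosity makes $X_j$ one-dimensionally $\gamma'$-porous on scales $[h,h^{\theta}]$. The porosity is line porosity along $e_1$, and the holes are parallelograms of height $\gamma l^\alpha\ge$ strip width once $l\ge h^\theta$. This is exactly where the anisotropy $\alpha<1$ makes the holes tall enough. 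The same holds for $Y$.

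\emph{Step 3 (almost orthogonality and 1D FUP).} We write $\mathds 1_X\mathcal F_h\mathds 1_Y=\sum_{j,k}\mathds 1_{X\cap S_j}\mathcal F_h\mathds 1_{Y\cap T_k}$. The operator $\mathcal F_h$ factors as $\mathcal F_h^{(x)}\otimes\mathcal F_h^{(y)}$. In the $y$-variable, localization to $S_j$ and $T_k$ at the given widths is not sharp; instead we use smooth cutoffs $\chi_j(y)$ and $\psi_k(D_y)$. These form symbols in an exotic calculus that is anisotropic (coisotropic) in $(y,\eta)$, giving Cotlar--Stein almost orthogonality so the sum costs only $h^{-\epsilon}$. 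On each piece the $x$-part is $\mathds 1_{X_j}\mathcal F^{(x)}_h\mathds 1_{Y_k}$. The one-dimensional FUP of Bourgain--Dyatlov for porous sets on scales $[h,h^{\theta}]$ bounds this by $h^{\beta_0\theta}$, where $\beta_0$ depends only on $\gamma'$. Choosing $\theta$ close to $1$ and the exotic-calculus loss smaller than $\beta_0\theta/2$ gives $\beta>0$.

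\emph{Main obstacle.} The hardest step is the almost-orthogonality in Step 3. The widths $h^{\alpha\theta}$ and $h^{\alpha'\theta'}$ satisfy $h^{\alpha\theta}h^{\alpha'\theta'}\gg h$ only when $\alpha\theta+\alpha'\theta'<1$. Otherwise the cutoffs violate the uncertainty principle, and we must instead let the cutoffs depend on a coisotropic Lagrangian foliation as in \cite{dyatlov2024semiclassical}. We would first try choosing the strip widths near $h^{1/2}$ in both variables: porosity then still holds on scales $[h,h^{1/(2\alpha)}]\cap[h,1]$, and the remaining range of scales is still of length $\sim|\log h|$.
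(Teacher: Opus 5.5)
Your architecture matches the paper's: normalize by a block-diagonal linear symplectic map, cut both sides into strips in the $v_2$-directions, project each strip to a one-dimensional porous set, apply the 1D FUP, and glue with Cotlar--Stein. The gap is in the scale bookkeeping of Steps 2--3, which is the actual content of the proof.

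\emph{The projected porosity range is backwards.} The projection $X_j$ of $X\cap S_j$ is porous on $[Ch^{\theta},1]$, not on $[h,h^{\theta}]$. Your own justification gives exactly this: a hole of height $\gamma l^\alpha$ covers a strip of width $h^{\alpha\theta}$ only once $l\gtrsim h^{\theta}$. Below $h^\theta$ porosity really fails; for a carpet with every column nonempty, the projection of a strip contains whole intervals of length $\sim h^\theta$.

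\emph{The 1D FUP does not give $h^{\beta_0\theta}$.} For $\mathcal F_h$, two sets porous only on $[h^{\theta},1]$ and $[h^{\theta'},1]$ satisfy an FUP only if $\theta+\theta'>1$, and the gain is a power of $h^{\theta+\theta'-1}$. ``A range of scales of length $\sim|\log h|$'' is not the right criterion. For instance, $[-ch^{1/2},ch^{1/2}]$ is porous on $[h^{1/2},1]$, yet $\|\mathds{1}_{[-ch^{1/2},ch^{1/2}]}\mathcal F_h\mathds{1}_{[-ch^{1/2},ch^{1/2}]}\|$ is independent of $h$.

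\emph{Consequences for your parameter choices.} ``$\theta$ close to $1$'' is not what is needed, and for $\alpha\ge 1/2$ it destroys the almost-orthogonality, as you noticed. Your fallback (widths near $h^{1/2}$) is essentially right, but for the wrong reason. What matters is not that a long range of scales survives. It is that the projected porosity reaches strictly below $h^{1/2}$ on both sides while the strips stay strictly wider than $h^{1/2}$.

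\emph{The correct constraints and the paper's choice.} The two real conditions are $\alpha\theta+\alpha'\theta'<1$, so the strip cutoffs in $y$ and $\eta$ respect the uncertainty principle, and $\theta+\theta'>1$, for the 1D FUP. They are compatible precisely because $\alpha,\alpha'<1$. The paper reduces to $\alpha=\alpha'$ and takes $\theta=\theta'=\tfrac12+\epsilon$, i.e.\ strips of width $h^{\rho}$ with $\rho=(\tfrac12+\epsilon)\alpha<\tfrac12$ in both $y$ and $\eta$. Any $\rho\in(\alpha/2,1/2)$ would do.
\begin{itemize}
\item $X$ and $Y$ are first thickened by $h^{1/2+\epsilon}\times h^{\rho}$ boxes (Proposition~\ref{prop:porous small scale}(a)), so the indicators can be replaced by smooth $\chi_X,\chi_Y$ and $\psi_j\chi_X^2$ is an honest symbol. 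You omit this step.
\item The projected sets are then porous from $\sim h^{1/2+\epsilon}$ to $1$, which suffices for the 1D FUP.
\item Your ``main obstacle'' never arises and no Lagrangian foliation is needed. The off-diagonal terms are $O(h^\infty)$ by the $S_{\frac12+\epsilon,\rho}(L^c,\mathbb R^4)$ calculus with $L^c=\operatorname{Span}(\partial_x,\partial_y,\partial_\eta)$, and Cotlar--Stein costs only a constant.
\end{itemize}

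\emph{Why this is a real gap, not a typo.} Your bookkeeping never uses $\alpha<1$ quantitatively: for strips of width $h^{\alpha\theta}$, the threshold $l\gtrsim h^\theta$ is the same for every $\alpha$. So it would also ``prove'' the case $\alpha=\alpha'=1$, which is false. Take porous $C,C'\subset\mathbb R$ of positive measure. The sets $\{x-y\in C\}$ (position) and $\{\xi+\eta\in C'\}$ (frequency) are $1$-anisotropic porous along $(e_1,e_2)$. Yet $f(x,y)=g(x-y)k(x+y)$, with $\operatorname{supp}g\subset C$ and $\tau\mapsto\int e^{-2\pi i v\tau/h}k(v)\,dv$ supported in $C'/2$, has $\operatorname{supp}f\subset\{x-y\in C\}$ and $\operatorname{supp}\mathcal F_hf\subset\{\xi+\eta\in C'\}$, so the norm equals $1$.
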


\begin{remark} Theorem \ref{thm:cfup-bm} is a special case of Theorem \ref{thm:cfup}. 
By Example \ref{ex:BMcarpet-porous}, the limiting sets $\mathbf X, \mathbf Y$ in Theorem \ref{thm:cfup-bm} are anisotropic porous. Then \eqref{eq:cfup} implies \eqref{eq:cfup-BD}. The non-fullness condition for alphabets in Theorem \ref{thm:cfup-bm} is stated for rows, because the segments $I$ in the porosity condition are parallel to the dominant direction $v_1$.  
\end{remark}

\begin{remark}
This FUP and the following argument can be adapted to more general high-dimensional anisotropic porous sets involving hyperplanes and cylinders, but we do not pursue it here, as we believe the two-dimensional case captures the main idea and is already sufficiently interesting.
\end{remark}

Before the proof, we recall some useful concepts. Let \(a(x,\xi)\) be a function (symbol) defined on the phase space \(\mathbb{R}^{2n}=\{(x,\xi)\}\), usually required to be sufficiently smooth and to satisfy appropriate growth conditions. The \emph{Weyl quantization} \(\operatorname{Op}_h^w(a)\) acts on functions \(u\in\mathcal{S}(\mathbb{R}^n)\) (the Schwartz space) by
\begin{equation}\label{eq:weylquan}
\bigl(\operatorname{Op}_h^w(a) u\bigr)(x) 
= \frac{1}{(2\pi h)^n} \int_{\mathbb{R}^n}\int_{\mathbb{R}^n} 
e^{\frac{i}{h}\,\langle x-y,\,\xi\rangle}\, 
a\!\left(\frac{x+y}{2},\xi\right) u(y) \, dy \, d\xi,
\end{equation}
where \(\langle\cdot,\cdot\rangle\) denotes the standard inner product in \(\mathbb{R}^n\). The integral is understood as an oscillatory integral. The definition can be extended to general symbols 
\[
a\in S(1)=\bigl\{ a\in C^\infty(\mathbb{R}^{2n}) : \sup_{x,\xi} |\partial_{x,\xi}^\alpha a(x,\xi)|<\infty \ \text{ for every multi-index } \alpha \bigr\}.
\]
For any symplectic matrix \(A\in \operatorname{Sp}(2n,\mathbb{R})\) we can define the \emph{metaplectic operator} \(\mathcal{M}(A): L^2(\mathbb{R}^n)\to L^2(\mathbb{R}^n)\) as a unitary transformation satisfying the following exact Egorov property:
\begin{equation}\label{eq:egorov}
\operatorname{Op}_h^w(a)\,\mathcal{M}(A)=\mathcal{M}(A)\,\operatorname{Op}_h^w(a\circ A)\qquad\forall\,a\in S(1).
\end{equation}
Such transformations satisfying the Egorov property exist and are unique up to multiplication by a unit complex number. The collection of all of them, called the \emph{metaplectic group}, forms a subgroup of the group of unitary transformations. For details, see, e.g., the semiclassical monograph by Zworski \cite{MR2952218}.

\begin{proof}[Proof of Theorem \ref{thm:cfup}]
 We can assume $\alpha=\alpha'<1$.
This proof is a modification of 
\cite[Section 4.3]{dyatlov2024semiclassical}.
We first pick $\epsilon=\epsilon(\alpha)>0$ such that
\[
0< \rho:=\Bigl(\frac{1}{2}+\epsilon\Bigr)\,\alpha<\frac12,\qquad \frac12<\rho+\frac{1}{2}+\epsilon<1.
\]

\emph{Step 1. Normalization.} Note that the orthogonality of direction pairs 
implies that we can find an invertible matrix $Q\in \operatorname{GL}(2,\mathbb{R})$ and a 
symplectic matrix $A\in \operatorname{Sp}(4,\mathbb{R})$ of the form
\[
A = \begin{pmatrix} Q & 0 \\ 0 & (Q^{-1})^T \end{pmatrix}
\]
such that for some $c_1,c_2>0$
\[
\begin{aligned}
&A( {v}_1',0,0)=c_1\partial_x,\quad &A( {v}_2',0,0)=c_2\partial_y,\\
&A(0,0, {v}_1)=\partial_\xi,\quad &A(0,0, {v}_2)=\partial_\eta,
\end{aligned}
\]
where $(x,y,\xi,\eta)$ is the standard coordinate on $T^*\mathbb{R}^2\simeq \mathbb{R}^2\times 
\mathbb{R}^2$.    
We view $Y\subset \mathbb{R}^2_{x,y}$ and $X\subset \mathbb{R}^2_{\xi,\eta}$; thus we can write 
\[
\mathds{1}_X\mathcal{F}_h\mathds{1}_Y=
\operatorname{Op}_h^w(\mathds{1}_{\mathbb{R}^2_{x,y}\times X})
\operatorname{Op}_h^w(\mathds{1}_{Y\times \mathbb{R}^2_{\xi,\eta}}),
\]
where $\operatorname{Op}_h^w$ is the Weyl calculus \eqref{eq:weylquan}.
We choose the 
metaplectic operator $\mathcal{M}(A)
:L^2(\mathbb{R}^2)\to L^2(\mathbb{R}^2)$ associated with the symplectic transform $A$, 
which is unitary and satisfies the exact Egorov property \eqref{eq:egorov}.
Thus we have
\[
\begin{aligned}
\|\mathds{1}_X\mathcal{F}_h\mathds{1}_Y\|
&=\|\operatorname{Op}_h^w(\mathds{1}_{\mathbb{R}^2_{x,y}\times X})
\operatorname{Op}_h^w(\mathds{1}_{Y\times \mathbb{R}^2_{\xi,\eta}})\mathcal{M}(A)\|\\
&=\|\operatorname{Op}_h^w(\mathds{1}_{A(\mathbb{R}^2_{x,y}\times X)})
\operatorname{Op}_h^w(\mathds{1}_{A(Y\times \mathbb{R}^2_{\xi,\eta})})\|.
\end{aligned}
\]
By (b) of Proposition \ref{prop:porous small scale}, we can assume that 
\[
({v}_1',{v}_2')=(\partial_x,\partial_y),\qquad ({v}_1,{v}_2)=(\partial_\xi,\partial_\eta).
\]

\emph{Step 2. Decomposition and almost-orthogonality.}   Now we consider the $h$-neighborhoods of $X, Y$:
\[
\begin{aligned}
X_h&:=X+[-h^{1/2+\epsilon},h^{1/2+\epsilon}]_{\xi}\times [-h^{(1/2+\epsilon)\alpha},h^{(1/2+\epsilon)\alpha}]_{\eta},\\
Y_h&:=Y+[-h^{1/2+\epsilon},h^{1/2+\epsilon}]_{x}\times [-h^{(1/2+\epsilon)\alpha},h^{(1/2+\epsilon)\alpha}]_{y}.
\end{aligned}
\]
By (a) of Proposition \ref{prop:porous small scale}, $X_h$ and $Y_h$ are $(\alpha$-anisotropic) $\gamma/5$-porous of 
scale $[(5/\gamma)^{1/\alpha}h^{1/2+\epsilon},1]$ along the directions $(\partial_\xi,\partial_\eta)$ and $(\partial_x,\partial_y)$, respectively.  

Choose $\chi_X\in C_c^\infty(X_h)$ which equals one on $X$ and $\chi_Y\in C_c^\infty(Y_h)$ which equals one on $Y$ 
such that for all $(j_1,j_2)\in \mathbb N^2$,
\begin{equation}\label{eq:estimate_chix_chiy}
\partial_{\xi}^{j_1}\partial_{\eta}^{j_2}\chi_X,\;
\partial_{x}^{j_1}\partial_{y}^{j_2}\chi_Y
= \mathcal{O}_{j_1,j_2}\bigl(h^{-j_2\rho -j_1\rho/\alpha}\bigr).
\end{equation}
Then for \eqref{eq:cfup} it suffices to show that for some $\beta>0$,
\begin{equation}\label{eq:chixchiy-boundbybeta}
\|\chi_X(hD)\,\chi_Y\|
\lesssim h^\beta.
\end{equation}

We next apply a partition of unity along the $y$-direction and 
the $\eta$-direction. Fix a 
function $\tilde{\psi}\in C_c^\infty((-1,1),[0,1])$ such that 
\[
\sum_{j\in \mathbb{Z}} \tilde{\psi}(x-j)\equiv 1,
\]
then define the functions $\psi_j\in C_c^\infty(\mathbb{R}^2)$ as 
\[
\psi_j(x,y):=\tilde{\psi}\Bigl(\frac{y}{h^\rho}-j\Bigr),\qquad j\in \mathbb Z,
\]
and the operators $P_{jl}:L^2(\mathbb{R}^2)\to 
L^2(\mathbb{R}^2)$ as 
\[
P_{jl}:=\chi_X(hD)\,\psi_j(hD)\, \psi_l\, \chi_Y,\qquad j,l\in \mathbb Z.
\]
Then we have 
\[
\chi_X(hD)\,\chi_Y
=\sum_{j,l\in \mathbb{Z}} P_{jl}.
\]
In view of the Cotlar–Stein lemma, for \eqref{eq:chixchiy-boundbybeta} it suffices to show that for some $\beta>0$
\begin{equation}\label{eq:Desired estimate for Pkl}
\|P_{jl}\|_{L^2\to L^2}\leq Ch^\beta,\quad \forall\, j,l\in \mathbb{Z},
\end{equation}
and for $|j_1-j_2|+|l_1-l_2|\geq 100$ and $m\geq 0$,
\begin{equation}\label{eq:Desired estimate for Pk1l1Pk2l2}
\begin{aligned}
\|P_{j_1l_1}^*P_{j_2l_2}\|_{L^2\to L^2}&\leq
C_m(1+|j_1-j_2|+|l_1-l_2|)^{-m}h^m,\\
\|P_{j_1l_1}P_{j_2l_2}^*\|_{L^2\to L^2}&\leq
C_m(1+|j_1-j_2|+|l_1-l_2|)^{-m}h^m.
\end{aligned}
\end{equation}

To prove \eqref{eq:Desired estimate for Pkl}, we claim that for any $j,l$ the projection sets 
\[
\Pi_x(\operatorname{supp}(\chi_Y\psi_l)),\qquad
\Pi_\xi(\operatorname{supp}(\chi_X\psi_j))
\]
are $\gamma'$-porous of scale 
\[[(100/\gamma)^{1/\alpha}h^{1/2+\epsilon},1]\] for some $\gamma'=\gamma/100>0$ independent of $j,l$.  
As $h\to 0$, we can always choose $h$ so small that $(100/\gamma)^{1/\alpha}h^{1/2+\epsilon}\ll 1$.  
Suppose the claim holds.  Since  
\[
P_{jl}=\chi_X(hD)\,\psi_j(hD)\;\mathds{1}_{\Pi_\xi(\operatorname{supp}(\chi_X\psi_j))\times \mathbb{R}_\eta}(hD)\;
\mathds{1}_{\Pi_x(\operatorname{supp}(\chi_Y\psi_l))\times \mathbb{R}_y}\;\psi_l\,\chi_Y,
\]
we can apply the one-dimensional FUP (see e.g. \cite[Proposition 4.2]{dyatlov2024semiclassical} which is derived from \cite{bourgain2018spectral}) to deduce
\[
\begin{aligned}
&\mathds{1}_{\Pi_\xi(\operatorname{supp}(\chi_X\psi_j))\times \mathbb{R}_\eta}(hD)\;
\mathds{1}_{\Pi_x(\operatorname{supp}(\chi_Y\psi_l))\times \mathbb{R}_y}\\
=&\;
\mathds{1}_{\Pi_\xi(\operatorname{supp}(\chi_X\psi_j))}\,\mathcal{F}_{h,\mathbb{R}}\,
\mathds{1}_{\Pi_x(\operatorname{supp}(\chi_Y\psi_l))}\otimes \operatorname{id}_y\\
=&\;\mathcal{O}(h^\beta)_{L^2(\mathbb{R})\to L^2(\mathbb{R})}
\end{aligned}
\]
for some $\beta=\beta(\gamma')>0$, where $\mathcal F_{h,\mathbb R}$ is the one-dimensional semiclassical Fourier transform along the $x$-direction. Then \eqref{eq:Desired estimate for Pkl} follows.

To prove the claim, fix $l$ and consider the line $\ell_l=\{(x,y):y=lh^\rho\}$, parallel to $\partial_x$.  
The anisotropic porosity of $Y_h$ implies that for any line segment 
$I\subset \ell_l$ of length $L\in [(5\gamma)^{1/\alpha}h^{1/2+\epsilon},1]$, we can find 
some $(x_0, lh^\rho)\in I$ such that 
\[
\bigl([x_0-\gamma L/20,x_0+\gamma L/20]_x\times [lh^\rho-\gamma L^\alpha/20,lh^\rho+\gamma L^\alpha/20]_y\bigr)
\cap Y_h=\emptyset.
\]
Thanks to the cutoff $\psi_l$, we know 
\[
[x_0-\gamma L/20,x_0+\gamma L/20]\cap \Pi_x(\operatorname{supp}(\chi_Y\psi_l))=\emptyset
\]
as long as $\gamma L^\alpha/20\geq h^\rho$, i.e. $L\geq (20/\gamma)^{1/\alpha}h^{1/2+\epsilon}$.  
This proves the claim for $\operatorname{supp}(\chi_Y\psi_l)$; a similar argument works
for $\operatorname{supp}(\chi_X\psi_j)$.  Hence the claim holds and \eqref{eq:Desired estimate for Pkl} is proved.

To prove \eqref{eq:Desired estimate for Pk1l1Pk2l2}, 
we note that 
\[
P_{j_1l_1}^*P_{j_2l_2}=
\chi_Y\,\psi_{l_1}\,\psi_{j_1}(hD)\,\chi_X(hD)\,\chi_X(hD)\,\psi_{j_2}(hD)\,\psi_{l_2}\,\chi_Y,
\]
which is nonzero only when $|j_1-j_2|\leq 3$.  Thus our task 
reduces to proving for $|l_1-l_2|\geq 97$,
\begin{equation}\label{eq:Desired estimate for psil psik psil}
\|\psi_{l_1}\,(\psi_{j_1}\psi_{j_2}\chi_X^2)(hD)\,\psi_{l_2}\|_{L^2\to L^2}
\leq C_m(1+|l_1-l_2|)^{-m}h^m.
\end{equation}
This falls into the category of \textit{symbol calculus associated to a coisotropic space}
described in \cite[Section 2.1.4]{dyatlov2024semiclassical}.  
Indeed, define the coisotropic space (its symplectic complement is contained in itself)
\[
L^{c}=\operatorname{Span}_{\mathbb{R}}(\partial_x,\partial_y,\partial_\eta)\subset T(T^*\mathbb{R}^2)\simeq \mathbb{R}^4.
\]
Then the estimate \eqref{eq:estimate_chix_chiy} and our choice of $(\psi_j)_{j\in\mathbb Z}$ imply that the symbols
\[
\psi_{j_1}(\xi,\eta)\psi_{j_2}(\xi,\eta)\chi_X^2(\xi,\eta),\quad 
\psi_{l_1}\otimes 1_{\xi,\eta},\quad 
\psi_{l_2}\otimes 1_{\xi,\eta}
\]
all lie in the class $S_{\frac{1}{2}+\epsilon,\rho}(L^{c}, \mathbb{R}^4)$ with uniform seminorm bounds for all $j,l$; here $S_{\frac{1}{2}+\epsilon,\rho}(L^{c}, \mathbb{R}^4)$ is a smooth $h$-dependent symbol class where derivatives grow by $h^{-\rho}$ along $L^{c}$ and by $h^{-(1/2+\epsilon)}$ in other directions.  See \cite[Definition 2.2]{dyatlov2024semiclassical} for the precise definition.  
Hence the desired estimate \eqref{eq:Desired estimate for psil psik psil}, interpreted as a composition of Weyl quantization operators with symbols in $S_{\frac{1}{2}+\epsilon,\rho}(L^{c}, \mathbb{R}^4)$, follows from \cite[Lemma 2.4]{dyatlov2024semiclassical}.  
The estimate for $P_{k_1l_1}P_{k_2l_2}^*$ follows in the same way.  This proves 
\eqref{eq:Desired estimate for Pk1l1Pk2l2} and completes the proof of the proposition.
\end{proof}

\subsection{Rescaled FUP and FUP involving $\mathcal F_h^{\operatorname{ani}}$}
\label{subsec:rescaled}

Using similar argument as in the proof of 
Theorem \ref{thm:cfup}, we can prove the following rescaled anisotropic continuous FUP.

\begin{theorem}[Rescaled CFUP]\label{thm:cfup-dual}
Let $\gamma\in(0,1)$, $\alpha,\alpha'\in(0,1)$, and $0<h<h(\gamma,\alpha,\alpha')\ll 1$, and let $(v_1,v_2)$ and $(v_1',v_2')$ be two pairs of linearly independent vectors in $\mathbb{R}^2$.
Let $X$ be an $\alpha$-anisotropic $\gamma$-porous set of scale $[h,1]$ along the direction $({v}_1,{v}_2)$, 
and let $Y$ be an $1/\alpha'$-anisotropic $\gamma$-porous set of scale $[1,h^{-1}]$ 
along the direction $({v}_1',{v}_2')$, such that ${v}_1'\perp {v}_2$ and ${v}_2'\perp {v}_1$. 
Then there exist $\beta>0$ and $C>0$ independent of $h$ such that 
\begin{equation}\label{eq:cfup for 1/alpha anisotorpic}
\|\mathds{1}_Y \mathcal{F} \mathds{1}_X\|_{L^2\to L^2}\leq C h^\beta,
\end{equation}
where $\mathcal{F}$ is the standard Fourier transform.
\end{theorem}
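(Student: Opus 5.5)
The plan is to reduce Theorem~\ref{thm:cfup-dual} to the same machinery as the proof of Theorem~\ref{thm:cfup}. The first step is to rescale to the semiclassical setting. Writing $\mathcal F$ in terms of $\mathcal F_h$ via the unitary dilation $f\mapsto h^{-1}f(\cdot/h)$, we get
\[
\|\mathds{1}_Y\mathcal F\mathds{1}_X\|=\|\mathds{1}_{hY}\mathcal F_h\mathds{1}_X\|,
\]
up to the harmless normalization of the phase. The set $hY$ is then $1/\alpha'$-anisotropic porous at macroscopic scales $[1,h^{-1}]$ before rescaling. After rescaling, a segment of length $L\in[1,h^{-1}]$ parallel to $v_1'$ becomes one of length $hL\in[h,1]$ along $v_1'$. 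The hole has size $\gamma L$ along $v_1'$ and $\gamma L^{1/\alpha'}$ along $v_2'$, which becomes $\gamma hL$ by $\gamma hL^{1/\alpha'}=\gamma h^{1-1/\alpha'}(hL)^{1/\alpha'}$. Since $1/\alpha'>1$, the hole is now much longer in the $v_2'$ direction than in the $v_1'$ direction, which is the reverse of the situation for $X$.

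Next I would normalize exactly as in Step~1 of the proof of Theorem~\ref{thm:cfup}. I use the metaplectic operator $\mathcal M(A)$ with $A=\operatorname{diag}(Q,(Q^{-1})^T)$ and the orthogonality relations $v_1'\perp v_2$, $v_2'\perp v_1$ to move to coordinates where $X$ is porous along $(\partial_\xi,\partial_\eta)$ and $hY$ along $(\partial_x,\partial_y)$. Proposition~\ref{prop:porous small scale} is assumed to have a macroscopic analogue, with the same proof as in the Appendix. I then thicken $X$ by $h^{1/2+\epsilon}\times h^{(1/2+\epsilon)\alpha}$ to get $X_h$. For $hY$, the natural thickening is by $h^{1/2+\epsilon}$ in $x$ and by something at least as large in $y$; a thickening of $h^{(1/2+\epsilon)\alpha}$ in $y$ is affordable because the $y$-holes of $hY$ are even longer. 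I also choose smooth cutoffs $\chi_X,\chi_Y$ satisfying derivative bounds as in \eqref{eq:estimate_chix_chiy}.

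Then I would perform the partition of unity $\psi_j(hD)$, $\psi_l$ at scale $h^\rho$ in the $\eta$ and $y$ directions and run Cotlar--Stein. The off-diagonal estimates \eqref{eq:Desired estimate for Pk1l1Pk2l2} follow verbatim from the coisotropic calculus \cite[Lemma 2.4]{dyatlov2024semiclassical}, since they only use the cutoff scales, which are unchanged. For the diagonal bound \eqref{eq:Desired estimate for Pkl}, I need the projections $\Pi_\xi(\operatorname{supp}\chi_X\psi_j)$ and $\Pi_x(\operatorname{supp}\chi_Y\psi_l)$ to be $\gamma'$-porous on scales $[Ch^{1/2+\epsilon},1]$, and then I apply the one-dimensional FUP \cite[Proposition 4.2]{dyatlov2024semiclassical}. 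For $X$ this is the same argument as before. For $hY$, a segment on the line $y=lh^\rho$ of length $L'\in[Ch^{1/2+\epsilon},1]$ contains a hole of width $\gamma L'$ in $x$ and height $\gamma h^{1-1/\alpha'}L'^{1/\alpha'}$ in $y$. This hole must exceed $h^\rho$ in height so that it covers the support of $\psi_l$.

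The main obstacle is this last height condition at the smallest scales $L'\sim h^{1/2+\epsilon}$. There the height is roughly $h^{1-1/\alpha'+(1/2+\epsilon)/\alpha'}=h^{1-(1/2-\epsilon)/\alpha'}$, and this is $\ge h^\rho$ only if $1-(1/2-\epsilon)/\alpha'\le\rho$. If this fails for the current choice, I would first try to fix it by choosing the $y$-partition scale separately, using $\psi_l$ at scale $h^{\rho'}$ with $\rho'$ close to $0$ or even taken as $1-(1/2-\epsilon)/\alpha'$ when that is positive, and checking that the coisotropic symbol class still applies with different exponents in $y$ and $\eta$. Failing that, I would restrict the one-dimensional FUP to scales $[h^{\kappa},1]$ with $\kappa<1$, which still yields a power gain $h^{\beta}$ with smaller $\beta$.
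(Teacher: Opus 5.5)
Your proposal follows the paper's proof. The paper likewise rescales $Y$ via $\tilde\chi_Y(\xi,\eta)=\chi_Y(\xi/h,\eta/h)$, normalizes to coordinate directions with $\alpha=\alpha'$, thickens at the $h^{1/2+\epsilon}$ scale, partitions in $y$ and $\eta$ at scale $h^{\rho}$, and runs Cotlar--Stein (one-dimensional FUP on the diagonal, coisotropic calculus off it), disposing of your height condition by taking $\rho=\max(\rho_1,\rho_2)$, which is your first fix.

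In fact the condition is automatic once you reduce to $\alpha=\alpha'$ (replace both by $\max(\alpha,\alpha')$, which only shrinks the required holes): $1-(\tfrac12-\epsilon)/\alpha\le(\tfrac12+\epsilon)\alpha$ is equivalent to $(1-\alpha)\bigl(1-(\tfrac12+\epsilon)(1+\alpha)\bigr)\ge 0$, which follows from the constraint $\rho+\tfrac12+\epsilon<1$ already needed for the calculus, so $\rho_2\le\rho_1$. Also, your exponent $1-(\tfrac12-\epsilon)/\alpha$ is what the paper's $\rho_2$ should be: its displayed $\rho_2=-1+(\tfrac12+\epsilon)\alpha^{-1}$ does not match the derivative bound it states for $\tilde\chi_Y$.
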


\begin{proof}
    As in the proof of Theorem \ref{thm:cfup}, we can 
    assume $\alpha=\alpha'$ and
    \[
        ({v}_1,{v}_2)=(\partial_x,\partial_y),\qquad ({v}_1',{v}_2')=
        (\partial_\xi,\partial_\eta).
    \]
    where $(x,y)$ is the 
    standard coordinate in the physical space, and 
    $(\xi,\eta)$ is the standard coordinate in the frequency space.

    We choose $\epsilon>0$ so small so that
    \[
        \begin{aligned}
            \rho_1:=\left(\frac{1}{2}+\epsilon\right)\alpha,
            \quad &\rho_2:=-1+\left(\frac{1}{2}+\epsilon\right)\alpha^{-1},\quad \rho:=\max(\rho_1,\rho_2)\\
            &\rho<\frac{1}{2},\quad \rho+\frac{1}{2}+\epsilon<1
        \end{aligned}
    \]
    Next we consider the $h$-dependent neighborhoods
    \[
        \begin{aligned}
            X_h&:=X+[-h^{1/2+\epsilon},h^{1/2+\epsilon}]_{x}\times [-h^{(1/2+\epsilon)\alpha},h^{(1/2+\epsilon)\alpha}]_{y},\\
            Y_h&:=Y+[-h^{-1/2+\epsilon},h^{-1/2+\epsilon}]_{\xi}\times 
            [-h^{(-1/2+\epsilon)/\alpha},h^{(-1/2+\epsilon)/\alpha}]_{\eta}.
        \end{aligned}
    \]
    $X_h$ is $\alpha$-anisotropic $\gamma/5$-porous of 
    scale $[(5/\gamma)^{1/\alpha}h^{1/2+\epsilon},1]$ 
    along the directions $(\partial_x,\partial_y)$, and 
    $Y_h$ is $1/\alpha$-anisotropic $\gamma/5$ porous 
    of scale $[(5/\gamma)^{1/\alpha}h^{-1/2+\epsilon},h^{-1}]$.
    We still choose $\chi_X\in C_c^\infty(X_h)$ 
    which equals one on $X$ and $\chi_Y\in C_c^\infty(Y_h)$ which equals one on $Y$ 
    such that for all $(j_1,j_2)\in \mathbb N^2$,
    \begin{equation}\label{eq:estimate_chix in 1/alpha}
        \partial_{x}^{j_1}\partial_{y}^{j_2}\chi_X=\mathcal{O}_{j_1,j_2}\bigl(h^{-j_2\rho -j_1(1/2+\epsilon)}\bigr),
        \quad
        \partial_{\xi}^{j_1}\partial_{\eta}^{j_2}\chi_Y=\mathcal{O}_{j_1,j_2}\bigl(h^{+j_2(1/2-\epsilon)/\alpha +j_1(1/2-\epsilon)}\bigr)
    \end{equation}
    We also consider the rescale function $\tilde{\chi}_Y(\xi,\eta):=\chi_Y(\xi/h,\eta/h)$, it has estimate 
    \begin{equation}\label{eq:estimate_chiy in 1/alpha}
        \partial_{\xi}^{j_1}\partial_{\eta}^{j_2}\tilde{\chi}_Y=
        \mathcal{O}_{j_1,j_2}\bigl(h^{-j_2\rho_2- j_1(1/2+\epsilon)}\bigr)
    \end{equation}
    It is reduced to show that for some $\beta>0$
    \[
        \|\tilde{\chi}_Y(hD)\,\chi_X\|\lesssim h^\beta.
    \]

    We still apply a partition of unity along the $y$-direction and 
    the $\eta$-direction. Fix a function $\tilde{\psi}\in C_c^\infty((-1,1),[0,1])$ such that 
    \[
        \sum_{j\in \mathbb{Z}} \tilde{\psi}(x-j)\equiv 1,
    \]
    then define the functions $\psi_j\in C_c^\infty(\mathbb{R}^2)$ as 
    \[
        \psi_j(x,y):=\tilde{\psi}\Bigl(\frac{y}{h^\rho}-j\Bigr),\qquad j\in \mathbb Z,
    \]
    and the operators $P_{jl}:L^2(\mathbb{R}^2)\to 
    L^2(\mathbb{R}^2)$ as 
    \[
        P_{jl}:=\tilde{\chi}_Y(hD)\,\psi_j(hD)\, \psi_l\, \chi_X,\qquad j,l\in \mathbb Z.
    \]
    Then we know for $|j_1-j_2|+|l_1-l_2|\leq 100$, the operators $P_{j_1l_1}^*P_{j_2l_2}$ 
    and $P_{j_1l_1}P_{j_2l_2}^*$ are of $\mathcal{O}_{L^2\to L^2}(h^\beta)$ for some $\beta>0$, using 
    the one-dimensional FUP after projecting $X$ to the $x$-axis and $Y$ to the $\xi$-axis.
    Actually, the same argument as in the proof of Theorem \ref{thm:cfup} shows that 
    the set
    \[ 
        \{\xi\in \mathbb{R}:\text{there exists } \eta\in \mathbb{R} \text{ so that } (\xi,\eta)\in \operatorname{supp}(\chi_Y) 
        \text{ and } (\xi/h,\eta/h)\in \operatorname{supp} \psi_j\}
    \]
    is of $\gamma/C_{\gamma,\alpha}$-porous from scale $C_{\gamma,\alpha}h^{-1/2+\epsilon}$ to $h^{-1}$, 
    where $C_{\gamma,\alpha}$ is a large constant depending only on $\gamma,\alpha$. And the 
    porous condition for the projection of $X$ to the $x$-axis is the same as in the proof of Theorem \ref{thm:cfup}.
    
    For $|j_1-j_2|+|l_1-l_2|\leq 100$, we still have 
    \[
        \begin{aligned}
        \|P_{j_1l_1}^*P_{j_2l_2}\|_{L^2\to L^2}&\leq
        C_m(1+|j_1-j_2|+|l_1-l_2|)^{-m}h^m,\\
        \|P_{j_1l_1}P_{j_2l_2}^*\|_{L^2\to L^2}&\leq
        C_m(1+|j_1-j_2|+|l_1-l_2|)^{-m}h^m.
        \end{aligned}
    \]
    using the $S_{\frac{1}{2}+\epsilon,\rho}(L^{c}, \mathbb{R}^4)$ calculus for symbols 
    as in the proof of Theorem \ref{thm:cfup}. So \eqref{eq:cfup for 1/alpha anisotorpic} follows from 
    Cotlar-Stein's lemma.
\end{proof}


Combining example \ref{ex:BMcarpet-porous} and example \ref{ex:rescaled BM-carpet 1/alpha porous} and Theorem \ref{thm:cfup-dual}, we have the following corollary for \emph{orthogonal Bedford-McMullen carpets}.      
\begin{corollary}\label{cor:cfup-ani}
    Let $\mathbf{X},\mathbf{Y}\subset \mathbb{R}^2$ be two orthogonal Bedford-McMullen carpets, both not full for each row. Then the neighborhood $\mathbf X_h$ and the rescaled neighborhood $\widetilde {\mathbf Y_h}$ are a dual pair of anisotropic porous sets as in Theorem \ref{thm:cfup-dual}.  Furthermore, there exist $\beta>0$ and $C>0$ independent of $h$ such that
    \[
        ||\mathds{1}_{\mathbf{Y}_h}\mathcal{F}_h^{\operatorname{ani}} \mathds{1}_{\mathbf{X}_h}||\leq Ch^\beta,
    \]
where $\mathcal F_h^{\mathrm{ani}}$ is the anisotropic semiclassical Fourier transform \eqref{eq:anis-semi-F}.  
\end{corollary}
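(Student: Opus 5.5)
The plan is to reduce the corollary to Theorem \ref{thm:cfup-dual} by a unitary anisotropic rescaling, and to supply the two porosity hypotheses from Example \ref{ex:BMcarpet-porous} and Example \ref{ex:rescaled BM-carpet 1/alpha porous}.

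First I will unwind $\mathcal F_h^{\operatorname{ani}}$. With $\mathbf X_h=\mathbf X+B^\alpha_{h^\alpha}$ and $\mathbf Y_h=\mathbf Y+B^\alpha_{h^\alpha}$, the change of variables $(\xi,\eta)\mapsto(\xi/h,\eta/h^\alpha)$ on the frequency side is implemented by a unitary dilation. It turns $h^{-(1+\alpha)/2}e^{-2\pi i(x\xi/h+y\eta/h^\alpha)}$ into the kernel of the standard Fourier transform $\mathcal F$, exactly as in the remark after Theorem \ref{thm:C-D}. This gives
\[
\|\mathds{1}_{\mathbf Y_h}\mathcal F_h^{\operatorname{ani}}\mathds{1}_{\mathbf X_h}\|=\|\mathds{1}_{\widetilde{\mathbf Y_h}}\mathcal F\mathds{1}_{\mathbf X_h}\|,\qquad \widetilde{\mathbf Y_h}=\{(\xi/h,\eta/h^\alpha):(\xi,\eta)\in\mathbf Y_h\}\subset \widetilde{\mathbf Y}+[-1,1]^2 .
\]
So it suffices to check the hypotheses of Theorem \ref{thm:cfup-dual} with $X=\mathbf X_h$ and $Y=\widetilde{\mathbf Y_h}$.

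Next I establish the porosity of each set. By Example \ref{ex:BMcarpet-porous}, no full row of $\mathcal A$ makes $\mathbf X$ $\alpha$-anisotropic $\gamma$-porous of scale $(0,1]$ along $(e_1,e_2)$. By (a) of Proposition \ref{prop:porous small scale}, the neighborhood $\mathbf X_h$ is then $\gamma/5$-porous of scale $[Ch,1]$; the lower scale can be reset to $h$ by shrinking $\gamma$. I read ``orthogonal carpets, both not full for each row'' as saying that $\mathbf Y$ is the reflection $\operatorname{Ref}$ of a carpet whose alphabet has no full row, so that $\mathcal B$ has no full column; this matches Theorem \ref{thm:cfup-bm-ani}. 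Example \ref{ex:rescaled BM-carpet 1/alpha porous} then gives that $\widetilde{\mathbf Y}$ is $1/\alpha$-anisotropic $\gamma$-porous of scale $[1,\infty)$ along $(e_2,e_1)$. Adding the unit box $[-1,1]^2$ keeps it porous of scale $[C,h^{-1}]$ with a smaller constant, by the macroscopic analogue of Proposition \ref{prop:porous small scale}(a). The directions are $(v_1,v_2)=(e_1,e_2)$ for $X$ and $(v_1',v_2')=(e_2,e_1)$ for $Y$, so $v_1'=e_2\perp v_2$ fails. Hence I must instead match the theorem's convention, in which $v_1'$ is orthogonal to $v_2$. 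Since $v_2=e_2$ requires $v_1'\parallel e_1$, I will redo the bookkeeping: the dominant (longer-parallelogram) direction of the $1/\alpha$-porous set is $e_1$, which is exactly what the proof of Theorem \ref{thm:cfup-dual} uses after projecting $Y$ to the $\xi$-axis. I therefore expect the pairing to be consistent once the roles in Definition \ref{def:porous} are read with $l^{1/\alpha}$ along $v_2'$.

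Finally, Theorem \ref{thm:cfup-dual} yields $\|\mathds{1}_{Y}\mathcal F\mathds{1}_X\|\le Ch^\beta$, and undoing the rescaling gives the claim. The main obstacle is precisely the direction matching. I have to verify that the porosity produced by Example \ref{ex:rescaled BM-carpet 1/alpha porous} (segments along $e_2$, holes of width $l^{1/\alpha}$ along $e_1$) projects to a porous set on the $\xi$-axis, as the proof of Theorem \ref{thm:cfup-dual} needs, and that this is compatible with $\mathbf X$'s holes after the partition in $y$. If the stated orthogonality convention does not align literally, I will apply $\operatorname{Ref}$ to one side, as in Theorem \ref{thm:cfup-bm-ani}, which swaps $e_1$ and $e_2$ and restores $v_1'\perp v_2$, $v_2'\perp v_1$.
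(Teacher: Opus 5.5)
Your outline is the paper's route: rescale $\mathcal F_h^{\operatorname{ani}}$ to $\mathcal F$, take porosity from Examples \ref{ex:BMcarpet-porous} and \ref{ex:rescaled BM-carpet 1/alpha porous}, then apply Theorem \ref{thm:cfup-dual}. However, the step you flag as the main obstacle is left unresolved, and the resolution you offer first is wrong.

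The direction mismatch is not a matter of convention. In the proof of Theorem \ref{thm:cfup-dual}, the frequency-side set is cut into thin $\eta$-strips and projected onto the $\xi$-axis, the axis dual to the $x$-axis onto which $X$ is projected. This requires segments along $\xi$ whose holes are long in the $\eta$-direction, i.e.\ $1/\alpha$-porosity along $(e_1,e_2)$. Example \ref{ex:rescaled BM-carpet 1/alpha porous} gives the opposite shape: holes of size $\gamma l$ along $e_2$ and $\gamma l^{1/\alpha}$ along $e_1$. These are long in $\xi$ and short in $\eta$, so they are destroyed when you take the union over an $\eta$-strip. Your claim that the dominant direction $e_1$ is ``exactly what the proof uses after projecting to the $\xi$-axis'' is therefore backwards. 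No rereading of Definition \ref{def:porous} rescues the pairing: for two standard carpets, the unreflected bound is precisely the estimate the paper says it cannot prove (remark after Theorem \ref{thm:C-D}). Your interpretive step is also inconsistent. If $\mathbf Y$ itself were a reflected carpet, Example \ref{ex:rescaled BM-carpet 1/alpha porous} would not apply to it, because rescaling a reflected carpet by $(1/h,1/h^\alpha)$ does not produce that hole geometry.

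The fix is the one you list only as a fallback, but it is forced and it changes the conclusion, so you must carry it through. Since $\operatorname{Ref}$ is an isometry swapping $e_1$ and $e_2$, the set $\operatorname{Ref}(\widetilde{\mathbf Y_h})$ is $1/\alpha$-porous along $(e_1,e_2)$. It therefore pairs with $\mathbf X_h$, which is $\alpha$-porous along $(e_1,e_2)$, exactly as Theorem \ref{thm:cfup-dual} requires, giving $\|\mathds{1}_{\operatorname{Ref}(\widetilde{\mathbf Y_h})}\mathcal F\mathds{1}_{\mathbf X_h}\|\le Ch^\beta$. The map $f\mapsto f\circ\operatorname{Ref}$ is unitary and commutes with $\mathcal F$. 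It does not commute with $\mathcal F_h^{\operatorname{ani}}$, which is why the reflection must be applied after rescaling. Hence the bound equals $\|\mathds{1}_{\widetilde{\mathbf Y_h}}\mathcal F\mathds{1}_{\operatorname{Ref}(\mathbf X_h)}\|$, and undoing the rescaling gives $\|\mathds{1}_{\mathbf Y_h}\mathcal F_h^{\operatorname{ani}}\mathds{1}_{\operatorname{Ref}(\mathbf X_h)}\|\le Ch^\beta$.

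So the reflection lands on the physical-side carpet, and ``orthogonal carpets'' must be read that way. This is Theorem \ref{thm:cfup-bm-ani}, and it is how the paper uses the reflection in the remark following the corollary. Without committing to this, your argument does not establish the displayed inequality.

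A minor point: shrinking $\gamma$ does not lower a porosity scale from $Ch$ to $h$. Instead, note that the proof of Theorem \ref{thm:cfup-dual} only uses porosity above scale $h^{1/2+\epsilon}$, or rescale $h$.
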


\begin{remark}In particular, we can choose two carpets: the first such that each row is not full, and the second such that each column is not full. Then reflect the second carpet to obtain a reflected carpet. The directions of the two carpets (the first and the reflected second) are orthogonal.  We can then prove Theorem \ref{thm:cfup-bm-ani} for the choosen orthogonal carpets. \end{remark}

\appendix

\section{Submultiplicativity of the discrete FUP}\label{app:submultiplicativity}
We aim to prove the submultiplicative inequality \eqref{eq:submultiplicativity} of the FUP norms under the Cantor iterates.
This is a repetition of Section 5.1 of \cite{Cohen}. We give a detailed 
calculation for the reader's convenience.

For integers $T_1,T_2,S_1,S_2\geq 2$, let $\mathbf T=(T_1, T_2)$ and $\mathbf S=(S_1, S_2)$. We define the following two unitary isomorphisms
\[
\Psi_1,\Psi_2: L^2(\mathbb{Z}_{\mathbf T})\otimes L^2(\mathbb{Z}_{\mathbf S})\to L^2(\mathbb{Z}_{\mathbf T\mathbf S})
\]
by
\[
\Psi_1(u\otimes v)(\mathbf t \mathbf S+ \mathbf s) = \Psi_2(u\otimes v)(\mathbf s \mathbf T+ \mathbf t) = u(\mathbf t)v(\mathbf s)
\]
for $u\in L^2(\mathbb{Z}_{\mathbf T})$, $v\in L^2(\mathbb{Z}_{\mathbf S})$ and $\mathbf t\in \mathbb Z_{\mathbf T}$, $\mathbf s\in \mathbb Z_{\mathbf S}$.
We next define the \emph{phase shift operator} $D^{ps}$ as a multiplication operator on the tensor product space $L^2(\mathbb{Z}_{\mathbf T})\otimes L^2(\mathbb{Z}_{\mathbf S})$ by
\[
D^{ps}f(\mathbf t, \mathbf s)=\exp\left[-2\pi i\, \frac{\mathbf t}{\mathbf T}\cdot \frac{\mathbf s}{\mathbf S}\right]f(\mathbf t, \mathbf s).
\] 

We \emph{claim} that the following tensor decomposition identity holds
\begin{equation}\label{eq:tensor-claim-submul}
    \Psi_1\, (\mathcal{F}_{\mathbf T} \otimes \operatorname{id})\, D^{ps}\, (\operatorname{id} \otimes  \mathcal{F}_{\mathbf S})\, \Psi_2^{-1}=\mathcal{F}_{\mathbf T\mathbf S}.
\end{equation}
Indeed, for any $u\in L^2(\mathbb{Z}_{\mathbf T})$ and 
$v\in L^2(\mathbb{Z}_{\mathbf S})$, we have
\[
    \begin{aligned}
        &(\mathcal{F}_{\mathbf T}\otimes\, \operatorname{id})\, D^{ps}\, (\operatorname{id}\otimes \,\mathcal{F}_{\mathbf S})(u\otimes v)(\mathbf t, \mathbf s)\\
        =&\frac{1}{\sqrt{T_1T_2}}\,\sum_{\mathbf t'\in \mathbb Z_{\mathbf T}}\,
        \exp\left[-2\pi i\,\mathbf t\cdot\frac{\mathbf t'}{\mathbf T}\right]\, \exp\left[-2\pi i\,\frac{\mathbf t'}{\mathbf T} \cdot \frac{\mathbf s}{\mathbf S}\right]\,
        u(\mathbf t')\,(\mathcal{F}_{\mathbf S}v)(\mathbf s)\\ 
        =&\frac{1}{\sqrt{T_1T_2S_1S_2}}\,\sum_{\mathbf t'\in \mathbb Z_{\mathbf T},\mathbf s'\in \mathbb Z_{\mathbf S}}\,
        \exp[-2\pi i\, \Phi(\mathbf t',\mathbf s';\mathbf t,\mathbf s)]\,
        u(\mathbf t')\,v(\mathbf s')
    \end{aligned}
\]
where the phase function $\Phi$ is defined as
\[\Phi(\mathbf t',\mathbf s';\mathbf t,\mathbf s)=\mathbf t\cdot\frac{\mathbf t'}{\mathbf T}+\frac{\mathbf t'}{\mathbf T} \cdot \frac{\mathbf s}{\mathbf S}+\mathbf s\cdot\frac{\mathbf s'}{\mathbf S}.
\]
On the other hand, we know 
\[\Psi_1^{-1}\, \mathcal{F}_{\mathbf T\mathbf S}\Psi_2(u\otimes v) (\mathbf t,\mathbf s)       =
    \frac{1}{\sqrt{T_1T_2S_1S_2}}\sum_{\mathbf t'\in \mathbb Z_{\mathbf T},\mathbf s'\in \mathbb Z_{\mathbf S}}\,
  \exp[-2\pi i\, \tilde \Phi(\mathbf t',\mathbf s';\mathbf t,\mathbf s)]\,
        u(\mathbf t')\,v(\mathbf s')\]
where the phase function $\tilde{\Phi}$ is 
\[
   \tilde \Phi(\mathbf t',\mathbf s';\mathbf t,\mathbf s)=\frac{\mathbf s' \mathbf T+\mathbf t'}{\mathbf T}\cdot 
   \frac{\mathbf t \mathbf S+\mathbf s}{\mathbf S}=(\mathbf s'+\frac{\mathbf t'}{\mathbf T})\cdot(\mathbf t+\frac{\mathbf s}{\mathbf S}) \quad = \Phi(\mathbf t',\mathbf s';\mathbf t,\mathbf s) \quad \text{mod} \quad   \mathbb Z.
\]
The last equality holds after taking modulo $\mathbb{Z}$. This completes the proof of the claimed identity \eqref{eq:tensor-claim-submul}.

Now we are ready to prove the submultiplicativity of the FUP norm. 
Let $\mathbf T=\mathbf M^{k_1}$ and $\mathbf S=\mathbf M^{k_2}$. 
By the self-similarity of Cantor sets, at the operator level we have 
\[
    \Psi_1\, (\mathds{1}_{\mathcal{Y}_{k_1}}\otimes \mathds{1}_{\mathcal{Y}_{k_2}})\, \Psi_1^{-1}=\mathds{1}_{\mathcal{Y}_{k_1+k_2}},
    \quad \Psi_2\, (\mathds{1}_{\mathcal{X}_{k_1}}\otimes \mathds{1}_{\mathcal{X}_{k_2}})\, \Psi_2^{-1}=\mathds{1}_{\mathcal{X}_{k_1+k_2}}.
\] 
Thus by the tensor identity \eqref{eq:tensor-claim-submul} we have 
\[
    \begin{aligned}
        \mathds{1}_{\mathcal{Y}_{k_1+k_2}}\mathcal{F}_{\mathbf T\mathbf S}\mathds{1}_{\mathcal{X}_{k_1+k_2}}
        =&\Psi_1\, (\mathds{1}_{\mathcal{Y}_{k_1}}\otimes \mathds{1}_{\mathcal{Y}_{k_2}})\, \Psi_1^{-1}\, \mathcal{F}_{\mathbf T\mathbf S}\, 
        \Psi_2\, (\mathds{1}_{\mathcal{X}_{k_1}}\otimes \mathds{1}_{\mathcal{X}_{k_2}})\, \Psi_2^{-1}\\
        =&\Psi_1\, (\mathds{1}_{\mathcal{Y}_{k_1}}\otimes \mathds{1}_{\mathcal{Y}_{k_2}})\, (\mathcal{F}_{\mathbf T}\otimes \operatorname{id})\, 
        D^{ps}\, (\operatorname{id}\otimes \mathcal{F}_{\mathbf S})\, (\mathds{1}_{\mathcal{X}_{k_1}}\otimes \mathds{1}_{\mathcal{X}_{k_2}})
        \,\Psi_2^{-1}\\
        =&\Psi_1\,(\mathds{1}_{\mathcal{Y}_{k_1}}\mathcal{F}_{\mathbf T}\mathds{1}_{\mathcal{X}_{k_1}}\otimes \mathds{1}_{\mathcal{Y}_{k_2}})\, 
        D^{ps}\, (\mathds{1}_{\mathcal{X}_{k_1}}\otimes \mathds{1}_{\mathcal{Y}_{k_2}}\mathcal{F}_{\mathbf S}\mathds{1}_{\mathcal{X}_{k_2}})
        \,\Psi_2^{-1}
    \end{aligned}
\]
where in the last equality we used the fact that $D^{ps}$ is a multiplication operator. 
This implies the submultiplicativity inequality \eqref{eq:submultiplicativity}.

\section{Irreducible lines in $\mathbb{Z}_{\mathbf{N}}$}\label{app:irreducible-lines}

In this appendix, we discuss the structure of a \emph{line} in $\mathbb{Z}_{\mathbf{N}}$ defined as \eqref{eq:line}.

Let us consider the non-empty set $\ell\subset \mathbb{Z}_{\mathbf{N}}$ 
defined as 
\[
\ell = \{ (x, y) \in \mathbb{Z}_{\mathbf{N}} : \frac{ax}{N_1} + \frac{by}{N_2} = c \quad (\text{mod}~ 1)\}
\]
for some integers $a, b$ and $c\in {\mathbb{Z}}/{(N_1N_2)}$. 
Define the group homomorphism $\phi:\mathbb{Z}_{\mathbf{N}}\to \mathbb{Q}/\mathbb{Z}$ by 
\[
    \phi(x,y)=\frac{ax}{N_1}+\frac{by}{N_2}.
\]
Then 
$\ell$ is a \emph{coset} in the sense that 
\[
    \ell=\ker(\phi)+\mathbf{p}_0
\]
for some $\mathbf{p}_0\in \ell$. 

The image of $\phi$ is a cyclic group generated by $\frac{1}{M}\in \mathbb{Q}/\mathbb{Z}$, 
where $M$ is the least common multiple 
\begin{equation}\label{eq:M}
    M:=\operatorname{lcm}\left(\frac{N_1}{\gcd(a,N_1)}, \frac{N_2}{\gcd(b,N_2)}\right).
\end{equation}
Set $m_1=\frac{N_1}{\gcd(a,N_1)}$ and $m_2=\frac{N_2}{\gcd(b,N_2)}$. It is easy to check, using Bézout's identity, that the set $\{\frac{ax}{N_1}: x \in \mathbb{Z}_{N_1}\}$ is a cyclic group generated by 
$\frac{1}{m_1}\in \mathbb{Q}/\mathbb{Z}$, and the same 
result holds for $\{\frac{by}{N_2}: y \in \mathbb{Z}_{N_2}\}$ 
with generator $\frac{1}{m_2}$. The sum of these two cyclic groups is a cyclic group generated by $\frac{1}{M}$, 
where $M$ is the least common multiple of $m_1$ and $m_2$.

By the isomorphism $\mathbb{Z}_{\mathbf{N}}/\ker(\phi)\simeq \operatorname{Im}(\phi)$, the kernel $\ker(\phi)$ is a subgroup of $\mathbb{Z}_{\mathbf{N}}$ of index $M$, and thus 
\[
    |\ker(\phi)|=|\ell|=\frac{N_1N_2}{M}.
\]

Next we consider different parameterization of the same line $\ell$, 
i.e., different choices of $(a,b,c)$ that define $\ell$.

Suppose that $\ell$ can also be defined by parameters $(a',b',c')$; then we must have
\[
    \operatorname{ker} \phi'=\operatorname{ker} \phi 
\]
where $\phi'$ is the group homomorphism from $\mathbb{Z}_{\mathbf{N}}$ to $\mathbb{Q}/\mathbb{Z}$ defined 
by $\phi'(x,y)=\frac{a'x}{N_1}+\frac{b'y}{N_2}$. 
We note that $\operatorname{Im}(\phi')$ is the cyclic group generated by $\frac{1}{M'}\in \mathbb{Q}/\mathbb{Z}$, where
\[
    M'=\operatorname{lcm}\left(\frac{N_1}{\gcd(a',N_1)}, \frac{N_2}{\gcd(b',N_2)}\right).
\]
Since $\operatorname{ker} \phi'=\operatorname{ker} \phi$, 
we must have $M'=M$. So if we define the isomorphisms 
$\bar{\phi},\bar{\phi}':\mathbb{Z}_{\mathbf{N}}/\ker(\phi)\to \mathbb{Z}\frac{1}{M}$ 
induced by $\phi$ and $\phi'$ respectively, then the map $\varphi:=\bar{\phi}'\circ \bar{\phi}^{-1}$ is an automorphism of $\mathbb{Z}\frac{1}{M}$, and thus $\varphi(\frac{1}{M})=\frac{k}{M}$ for some $k\in \mathbb{Z}$ 
with $\gcd(k,M)=1$. Now for any $\mathbf{p}\in \mathbb{Z}_{\mathbf{N}}$, we have 
\[
    \phi'(\mathbf{p})=\bar{\phi}'(\mathbf{p}+\ker(\phi))=
    \varphi\circ \bar{\phi}(\mathbf{p}+\ker(\phi))=k\cdot \phi(\mathbf{p}).
\]
This implies that 
\[
    \phi'=k\cdot \phi 
\]
for some $k\in \mathbb{Z}$ with $\gcd(k,M)=1$. 
Therefore 
\begin{equation}\label{eq:line-rep}
    a'\equiv ka \mod N_1,\qquad b'\equiv kb \mod N_2 \qquad \text{and}\qquad c'\equiv kc \mod 1.
\end{equation}

Conversely, given $k\in \mathbb{Z}$ with $\gcd(k,M)=1$, define $\phi'=k\phi$. Then $\operatorname{ker} \phi'=\operatorname{ker} \phi$ 
and thus $(a',b',c')=(ka,kb,kc)$ gives another parameterization 
of the same line $\ell$.

\begin{proposition}
Any irreducible line $\ell$ in $\mathbb Z_{\mathbf N}$ with normal vector $(a,b)\in \mathbb Z_{\mathbf N}$ (see Definition \ref{def:ir-line}) has $N_1N_2/M$ points, where $M$ is the least common multiple defined in \eqref{eq:M}. Moreover, $\ell$ has another representation with normal vector $(a',b')$ if and only if $(a',b') \equiv k\cdot(a,b) \pmod{\mathbf N}$ for some $k\in \mathbb Z$ with $\gcd(k,M)=1$.
\end{proposition}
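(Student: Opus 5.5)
The plan is to realize $\ell$ as a coset of the kernel of a homomorphism and count it with the first isomorphism theorem; the characterization of the admissible normal vectors then comes from comparing two homomorphisms with the same kernel through an automorphism of a cyclic group. Both parts only repackage the computation carried out just before the statement.

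\emph{Cardinality.} Write $\ell=\{\phi=c\}$, where
\[
\phi(x,y)=\frac{ax}{N_1}+\frac{by}{N_2}
\]
is viewed as a homomorphism $\mathbb Z_{\mathbf N}\to\mathbb Q/\mathbb Z$. Since $\ell$ is non-empty, fixing $\mathbf p_0\in\ell$ gives $\ell=\ker\phi+\mathbf p_0$, so $|\ell|=|\ker\phi|$.

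To find the image, I will use Bézout. The set $\{ax/N_1 : x\in\mathbb Z_{N_1}\}$ is the cyclic subgroup generated by $1/m_1$, where $m_1=N_1/\gcd(a,N_1)$. Likewise, $\{by/N_2 : y\in\mathbb Z_{N_2}\}$ is generated by $1/m_2$, where $m_2=N_2/\gcd(b,N_2)$. The sum of the subgroups generated by $1/m_1$ and $1/m_2$ in $\mathbb Q/\mathbb Z$ is the cyclic group generated by $1/\operatorname{lcm}(m_1,m_2)=1/M$. Hence $|\operatorname{Im}\phi|=M$, and the first isomorphism theorem gives
\[
|\ell|=|\ker\phi|=\frac{N_1N_2}{M}.
\]

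\emph{Converse direction of the characterization.} Suppose $(a',b')\equiv k(a,b)\pmod{\mathbf N}$ with $\gcd(k,M)=1$. Then $\phi'=k\phi$. Because $\phi$ takes values in $\tfrac1M\mathbb Z/\mathbb Z$ and $k$ is a unit modulo $M$, multiplication by $k$ is injective on the image, so $\ker\phi'=\ker\phi$. Taking $c'=kc$, we get $\{\phi'=c'\}=\ker\phi+\mathbf p_0=\ell$.

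\emph{Forward direction of the characterization.} Suppose $\ell=\{\phi'=c'\}$ for some $(a',b',c')$. Both sets are cosets of their respective kernels, and translating by $\mathbf p_0$ shows $\ker\phi'=\ker\phi$. The induced maps
\[
\bar\phi,\ \bar\phi' : \mathbb Z_{\mathbf N}/\ker\phi \to \mathbb Q/\mathbb Z
\]
are then injective. So $\operatorname{Im}\phi'$ is cyclic of the same order $M$, and therefore equals the unique subgroup $\tfrac1M\mathbb Z/\mathbb Z$.

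The composition $\bar\phi'\circ\bar\phi^{-1}$ is an automorphism of $\mathbb Z/M$. It is therefore multiplication by some $k$ with $\gcd(k,M)=1$, so $\phi'=k\phi$ as functions on $\mathbb Z_{\mathbf N}$.

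Evaluating at $(1,0)$ and $(0,1)$ gives
\[
\frac{a'}{N_1}\equiv\frac{ka}{N_1}\pmod 1,\qquad \frac{b'}{N_2}\equiv\frac{kb}{N_2}\pmod 1,
\]
that is, $a'\equiv ka\pmod{N_1}$ and $b'\equiv kb\pmod{N_2}$. The relation $c'\equiv kc$ then follows by evaluating at $\mathbf p_0$.

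\emph{Main obstacle.} The only delicate point is justifying that $\operatorname{Im}\phi'=\operatorname{Im}\phi$ rather than merely that the two images have equal order. This is settled by uniqueness of the cyclic subgroup of each finite order in $\mathbb Q/\mathbb Z$. The proposition does not use the coprimality of $(a,b)$ at all, and its statement includes that hypothesis only because it concerns irreducible lines.
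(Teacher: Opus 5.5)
Your proposal is correct and follows essentially the same route as the paper: you realize $\ell$ as a coset of $\ker\phi$, compute $\operatorname{Im}\phi=\mathbb Z\frac1M$ via Bézout and the lcm, and compare two homomorphisms with the same kernel through an automorphism of the cyclic group of order $M$ to get $\phi'=k\phi$. The only cosmetic difference is that you identify $\operatorname{Im}\phi'$ with $\operatorname{Im}\phi$ using uniqueness of the cyclic subgroup of each order in $\mathbb Q/\mathbb Z$, whereas the paper deduces $M'=M$ and reads off both images from the same lcm description.
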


\begin{remark}
    There might be different choices of $(a,b,c)$ with $(a,b)\in \mathbb Z_{\mathbf N}$ and  
    $\gcd(a,b)=1$ that define the same line $\ell$.
    For example, if $N_1=15$, $N_2=10$, $a=1$, $b=2$, then by \eqref{eq:M} $M=30$ 
    and by \eqref{eq:line-rep} we may take $k=11$; then $a'=11$ and $b'=2$, and 
    the line defined by $(a,b,c=0)$ coincides with that defined by $(a',b',c'=0)$.
\end{remark}

\section{Anisotropic porosity under transformations}\label{app:ani-porous-transf}

\begin{proposition}
    \label{prop:porous small scale}
Let $\alpha\in(0,1)$, $\gamma\in(0,1)$, and $0<h<h(\alpha,\gamma)\ll 1$. Let $X\subset\mathbb R^2$ be an $\alpha$-anisotropic $\gamma$-porous set of scale $[h,1]$ along two independent directions $(v_1,v_2)\in\mathbb R^2\times\mathbb R^2$; see Definition \ref{def:porous}. Then
\begin{enumerate}[label=\emph{(\alph*)}]
\item (Rectangle neighborhood) $X+\{x v_1+y v_2 \mid x\in[-h,h],\, y\in[-h^\alpha,h^\alpha]\}$ is $\alpha$-anisotropic $\gamma/5$-porous of scale $[(5/\gamma)^{1/\alpha}h,1]$ along the same directions.
\item (Linear transformation) Let $A\in\operatorname{GL}(2,\mathbb{R})$. Then $AX$ is $\alpha$-anisotropic $\gamma'$-porous of scale $[\rho h,\rho]$ along the directions $(A v_1,A v_2)$, where
\[
\rho=\frac{|Av_1|}{|v_1|},\qquad \gamma'=\min(\rho^{-\alpha},\rho^{-1})\,\gamma.
\] 
\end{enumerate}
\end{proposition}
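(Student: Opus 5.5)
The plan is to prove both parts directly from Definition~\ref{def:porous}, each time transferring the excluded parallelogram from the original set to the new one. For both parts I will use the same mechanism: a parallelogram built on $(v_1,v_2)$ with half-widths $(s_1,s_2)$, minus a rectangle of half-widths $(h,h^\alpha)$ in the same frame, still contains the parallelogram with half-widths $(s_1-h,s_2-h^\alpha)$. No earlier result of the paper is needed beyond the definition.

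\emph{(a) Rectangle neighborhood.} Write $R_h=\{xv_1+yv_2: |x|\le h,\ |y|\le h^\alpha\}$. Fix a segment $I$ parallel to $v_1$ of length $l\in[(5/\gamma)^{1/\alpha}h,1]$. Since $\gamma<1$ and $\alpha<1$, this gives $l\ge h$, so $l\in[h,1]$ and the porosity of $X$ applies. It yields $x\in I$ with $\operatorname{PG}_{\gamma,\alpha,l}(x,v_1,v_2)\cap X=\emptyset$.

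I then claim that $\operatorname{PG}_{\gamma/5,\alpha,l}(x,v_1,v_2)\cap(X+R_h)=\emptyset$. Suppose $p=x+t_1v_1+t_2v_2$ lies in the small parallelogram and $p=x'+r$ with $x'\in X$ and $r=sv_1+uv_2\in R_h$. Then
\[x'=x+(t_1-s)v_1+(t_2-u)v_2,\]
with $|t_1-s|\le \gamma l/10+h$ and $|t_2-u|\le \gamma l^\alpha/10+h^\alpha$. It therefore suffices to check
\[h\le \tfrac{2\gamma}{5}\,l, \qquad h^\alpha\le \tfrac{2\gamma}{5}\,l^\alpha .\]
The second inequality is equivalent to $l\ge (5/(2\gamma))^{1/\alpha}h$, which is implied by the lower scale bound. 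The first follows as well, since $(5/\gamma)^{1/\alpha}\ge 5/\gamma$. Hence $x'\in \operatorname{PG}_{\gamma,\alpha,l}(x,v_1,v_2)\cap X$, a contradiction.

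\emph{(b) Linear transformation.} Take a segment $J$ parallel to $Av_1$ of length $L\in[\rho h,\rho]$. Its preimage $A^{-1}J$ is a segment parallel to $v_1$, and $A$ stretches the direction $v_1$ by the factor $\rho=|Av_1|/|v_1|$, so $A^{-1}J$ has length $L/\rho\in[h,1]$. Porosity of $X$ gives $x\in A^{-1}J$ with $\operatorname{PG}_{\gamma,\alpha,L/\rho}(x,v_1,v_2)\cap X=\emptyset$. Applying $A$ shows that
\[\{Ax+t_1Av_1+t_2Av_2:\ |t_1|\le \gamma L/(2\rho),\ |t_2|\le \gamma (L/\rho)^\alpha/2\}\]
misses $AX$, and $Ax\in J$.

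This set contains $\operatorname{PG}_{\gamma',\alpha,L}(Ax,Av_1,Av_2)$ as soon as $\gamma'\le \gamma\rho^{-1}$ and $\gamma'\le\gamma\rho^{-\alpha}$. Both hold for $\gamma'=\min(\rho^{-1},\rho^{-\alpha})\gamma$, which concludes (b).

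The main obstacle is not analytic but one of conventions. The parallelogram in Definition~\ref{def:porous} is parametrized by $t_1v_1$ with $v_1$ not normalized, while the segment length $l$ is Euclidean. I will fix the convention that the parameter $t_1$ is scaled consistently with the segment length (equivalently, take $|v_1|=1$ before and after the transformation, absorbing the factor into $\rho$). This is exactly what makes the scale factor $\rho=|Av_1|/|v_1|$ and the stated $\gamma'$ come out. A similar normalization of $v_2$ only affects constants, and I will check that it is compatible with the use in Step~1 of the proof of Theorem~\ref{thm:cfup}.
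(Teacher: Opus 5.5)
Your proof is correct and follows the paper's route. In (a) you shrink the empty parallelogram by the $(h,h^\alpha)$-rectangle, and in (b) you pull segments back by $A$ and compare parameter ranges; your bookkeeping in (a) via $h\le \tfrac{2\gamma}{5}l$ and $h^\alpha\le \tfrac{2\gamma}{5}l^\alpha$ is actually tidier than the paper's.

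The convention worry in your last paragraph is unnecessary. Your computation in (b) already uses Definition~\ref{def:porous} literally, with the unnormalized $v_1,v_2$ (and $Av_1,Av_2$) multiplying $t_1,t_2$, and that is exactly what yields $\gamma'=\min(\rho^{-1},\rho^{-\alpha})\gamma$. Normalizing the direction vectors would change the constant, bringing in $|Av_2|/|v_2|$, so do not adopt that normalization.
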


\begin{proof}
       
(a) Given a segment $I$ parallel to $v_1$ with length $l\in[(5/\gamma)^{1/\alpha}h,1]$, we can find $x_0\in I$ such that
\[
 \operatorname{P} \cap X=\emptyset,  \qquad \text{where}~ \operatorname{P}=\bigl\{x_0+t_1 v_1+t_2 v_2 \mid t_1\in[-\tfrac{\gamma l}{2},\tfrac{\gamma l}{2}],\; t_2\in[-\tfrac{\gamma l^\alpha}{2},\tfrac{\gamma l^\alpha}{2}]\bigr\}.
\]
Consequently,
\begin{multline*}
\bigl\{x_0+t_1 v_1+t_2 v_2 \mid t_1\in[-\tfrac{\gamma l}{2}+h,\tfrac{\gamma l}{2}-h],\; t_2\in[-\tfrac{\gamma l^\alpha}{2}+h^\alpha,\tfrac{\gamma l^\alpha}{2}-h^\alpha]\bigr\}\\
\cap\bigl(X+\{x v_1+y v_2 \mid x\in[-h,h],\; y\in[-h^\alpha,h^\alpha]\}\bigr)=\emptyset.
\end{multline*}
Hence the condition $l\geq(5/\gamma)^{1/\alpha}h$ implies
\[
\frac{\gamma l}{2}-h \ge \frac{\gamma l}{2}-\frac{\gamma l}{5} = \frac{\gamma l}{5},\qquad
\frac{\gamma l^\alpha}{2}-h^\alpha \ge \frac{\gamma l^\alpha}{2}-\frac{\gamma l^\alpha}{5} = \frac{\gamma l^\alpha}{5}.
\]
This proves (a).

(b) Let $A\in \operatorname{GL}(2,\mathbb{R})$. Consider the same parallelogram $\mathrm{P}$ as above.  Set $u_1=Av_1$ and $u_2=Av_2$. Under the linear map $A$, original parallelogram $\mathrm P$ is sent to the parallelogram along $(u_1,u_2)$ centered at $Ax_0$:
\[
A(\mathrm P) = \bigl\{Ax_0+t_1 u_1+t_2 u_2 \mid t_1\in[-\tfrac{\gamma l}{2},\tfrac{\gamma l}{2}],\; t_2\in[-\tfrac{\gamma l^\alpha}{2},\tfrac{\gamma l^\alpha}{2}]\bigr\}.
\]
Moreover, a segment $I$ parallel to $v_1$ is mapped under $A$ to a segment $I'$ parallel to $u_1$ with length $\frac{|u_1|}{|v_1|}|I|$. Thus (b) follows from the definition of $\alpha$-anisotropic $\gamma$-porous sets.
\end{proof}

Similarly, we can prove the following properties for \emph{macroscopic} anisotropic porous sets without proof.
\begin{proposition}
    Let $\alpha\in(0,1)$, $\gamma\in(0,1)$, and $0<h<h(\alpha,\gamma)\ll 1$. 
    Let $X\subset\mathbb R^2$ be an $1/\alpha$-anisotropic $\gamma$-porous set of 
    scale $[1,h^{-\alpha}]$ along two independent directions $(v_1,v_2)\in\mathbb R^2\times\mathbb R^2$; see Definition \ref{def:porous}. Then
\begin{enumerate}[label=\emph{(\alph*)}]
\item $X+\{x v_1+y v_2 \mid x\in[-1,1],\, y\in[-1,1]\}$ 
is $1/\alpha$-anisotropic $\gamma/5$-porous of scale $[5/\gamma,h^{-\alpha}]$ along the same directions.
\item Let $A\in\operatorname{GL}(2,\mathbb{R})$. Then $AX$ is $1/\alpha$-anisotropic $\gamma'$-porous of scale $[\rho,\rho h^{-\alpha}]$ 
along the directions $(A v_1,A v_2)$, where
\[
\rho=\frac{|Av_1|}{|v_1|},\qquad \gamma'=\min(\rho^{-1/\alpha},\rho^{-1})\,\gamma.
\] 
\end{enumerate}
\end{proposition}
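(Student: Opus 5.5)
The plan is to mimic the proof of the previous proposition, replacing the microscopic scale $[h,1]$ by the macroscopic scale $[1,h^{-\alpha}]$ and the exponent $\alpha$ by $1/\alpha$. Throughout, the parallelograms are
\[
\operatorname{PG}_{\gamma,1/\alpha,l}(x,v_1,v_2)=\bigl\{x+t_1v_1+t_2v_2: |t_1|\le \tfrac{\gamma l}{2},\ |t_2|\le \tfrac{\gamma l^{1/\alpha}}{2}\bigr\}.
\]

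For (a), I fix a segment $I\parallel v_1$ of length $l\in[5/\gamma,h^{-\alpha}]$. Since $l\ge 5/\gamma\ge 1$, it lies in the original porosity range, so porosity of $X$ gives $x_0\in I$ with $\operatorname{PG}_{\gamma,1/\alpha,l}(x_0,v_1,v_2)\cap X=\emptyset$.

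Shrinking this parallelogram by $1$ in each of the $t_1,t_2$ directions gives a parallelogram disjoint from $X+\{xv_1+yv_2: |x|,|y|\le1\}$. I then need the half-widths to remain at least $\gamma/5$ times the original scale:
\[
\tfrac{\gamma l}{2}-1\ge \tfrac{\gamma l}{5}, \qquad \tfrac{\gamma l^{1/\alpha}}{2}-1\ge \tfrac{\gamma l^{1/\alpha}}{5}.
\]
Both follow from $\gamma l\ge 5$. This gives $\gamma/5$-porosity, using $\operatorname{PG}_{\gamma/5,1/\alpha,l}$, at every scale in $[5/\gamma,h^{-\alpha}]$.

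For (b), I write $u_i=Av_i$ and $\rho=|u_1|/|v_1|$. A segment $J\parallel u_1$ of length $l'\in[\rho,\rho h^{-\alpha}]$ equals $A I$ for a segment $I\parallel v_1$ of length $l=l'/\rho\in[1,h^{-\alpha}]$. Porosity of $X$ gives a point $x_0\in I$ with an empty parallelogram, and applying $A$ yields
\[
A\,\operatorname{PG}_{\gamma,1/\alpha,l}(x_0,v_1,v_2)=\{Ax_0+t_1u_1+t_2u_2: |t_1|\le \gamma l/2,\ |t_2|\le \gamma l^{1/\alpha}/2\},
\]
which is disjoint from $AX$ and centred at $Ax_0\in J$.

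It remains to compare this with the required $\operatorname{PG}_{\gamma',1/\alpha,l'}(Ax_0,u_1,u_2)$, i.e. to check $\gamma' l'\le \gamma l$ and $\gamma' l'^{1/\alpha}\le \gamma l^{1/\alpha}$. Substituting $l=l'/\rho$ turns these into $\gamma'\le \gamma\rho^{-1}$ and $\gamma'\le\gamma\rho^{-1/\alpha}$, which is exactly $\gamma'=\min(\rho^{-1/\alpha},\rho^{-1})\gamma$.

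Neither part has a genuine obstacle. The only care needed is in (a), where the condition $l\ge 5/\gamma$ must control the $t_2$-direction loss of $1$ against $l^{1/\alpha}$. This works because $l\ge1$ and $1/\alpha>1$ give $l^{1/\alpha}\ge l$. This is the reverse of the microscopic case, where the threshold had to be taken as $(5/\gamma)^{1/\alpha}h$.
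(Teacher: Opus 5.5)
Your proposal is correct and matches the paper's intended argument: the paper states this macroscopic version without proof, saying only that it follows similarly to Proposition~\ref{prop:porous small scale}, whose proof consists of the same two steps you carry out with $\alpha$ replaced by $1/\alpha$. In (a) you shrink the empty parallelogram by the neighborhood width, and in (b) you push it forward by $A$ and compare half-widths after substituting $l=l'/\rho$. You also correctly identify the one genuinely new point, that $l\ge 5/\gamma\ge 1$ together with $1/\alpha>1$ gives $l^{1/\alpha}\ge l$, so the $t_2$-direction loss is harmless and the threshold is $5/\gamma$ rather than $(5/\gamma)^{1/\alpha}$.
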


\section{Different porosities of BM carpets}\label{app:porositys}

In this section we review the definitions of ball and line porosity for fractal sets from \cite{{MR4085124}} and \cite{MR4927737}, and we discuss whether, and when, a BM carpet or its anisotropic rescaling exhibits different porosities, and compare the relations among ball, line, and anisotropic porosities.

\begin{definition}
    Let $\mathbf{X}\subset \mathbb{R}^n$, and $\gamma\in (0,1/10)$.
    We say $\mathbf{X}$ is \emph{$\gamma$-porous on balls}
    from scales $l_1$ to $l_2$ if for every ball 
    $\mathrm{B}\subset \mathbb{R}^n$ of diameter $r\in [l_1,l_2]$ 
    we can find some $\mathbf{x}\in \mathrm{B}$ such that 
    $B(\mathbf{x},\gamma r)\cap \mathbf{X}=\emptyset$. 
    We say $\mathbf{X}$ is \emph{$\gamma$-porous on lines}
    from scales $l_1$ to $l_2$ if for every line segment
    $\tau\subset \mathbb{R}^n$ of length $r\in [l_1,l_2]$ 
    we can find some $\mathbf{x}\in \tau$ such that 
    $B(\mathbf{x},\gamma r)\cap \mathbf{X}=\emptyset$. 
\end{definition}

The following proposition shows that Cohen's line porosity can easily \emph{fail} for BM carpets, in particular in the anisotropic porous cases.

\begin{proposition}\label{prop:Failure of line/ball porosity for BM carpet}
    Let $\mathcal{A}\subset \mathbb{Z}_{\mathbf{M}}$ be an alphabet, 
    and let $\mathbf{X}\subset \mathbb{R}^2$ be associated Bedford-McMullen carpet. Then 
    \begin{enumerate}[label=\emph{(\alph*)}]
        \item   (Failure of line porosity) If $\mathcal{A}$ is \textbf{nonempty} for each row, 
                then there is no uniform 
                porous constant $\gamma$ for the 
                line porosity condition at scale $h$ for $\mathbf{X}$ as $h\to 0$.
        \item   (Ball porosity) If some row of $\mathcal{A} $ is \textbf{full}, and every row is \textbf{non-empty}, 
                then there is no uniform 
                porous constant $\gamma$ for the 
                ball porosity condition at scale $h$ for $\mathbf{X}$ as $h\to 0$. This condition is also necessary. That is, $\mathbf X$ is ball porous if and only if $\mathcal A$ is not full for all rows or some row is empty.
    \end{enumerate}
\end{proposition}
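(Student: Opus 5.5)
For (a), I would show directly that vertical segments defeat line porosity. The key observation is that if every row of $\mathcal A$ is nonempty, then every height is attained, at every scale. Concretely, take any level-$k$ rectangle
\[
Q=\Bigl[\tfrac{j_1}{M_1^k},\tfrac{j_1+1}{M_1^k}\Bigr]\times\Bigl[\tfrac{j_2}{M_2^k},\tfrac{j_2+1}{M_2^k}\Bigr],\qquad (j_1,j_2)\in\mathcal X_k .
\]
By self-affinity, $\mathbf X\cap Q$ is an affine copy of $\mathbf X$. The projection of $\mathbf X$ onto the $y$-axis is all of $[0,1]$: given any base-$M_2$ expansion of $y$, each digit lies in a nonempty row, so we can choose an admissible $x$-digit at every level. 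Hence for every height $y$ in the vertical range of $Q$ there is a point of $\mathbf X\cap Q$ at that height.

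Now let $\tau$ be the vertical segment of length $r=M_2^{-k}$ running through the middle of $Q$. Every point of $\tau$ lies within horizontal distance $M_1^{-k}$ of $\mathbf X$. So any ball $B(\mathbf x,\gamma r)$ with $\mathbf x\in\tau$ meets $\mathbf X$ as soon as $\gamma M_2^{-k}>M_1^{-k}$. Thus the porosity constant at scale $r$ is at most $(M_2/M_1)^k\to0$, which proves (a).

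For the failure part of (b), I would combine the previous observation with the full row, say row $b$, and use it at intermediate levels. Fix $n$, and set $r=M_2^{-n}$. Let $n'\approx \alpha n$ be the largest level with $M_1^{-n'}\ge r$, and let $t\le n-n'$ be a free parameter that grows with $n$, for instance $t\approx (1-\alpha)n/2$. Consider the points of $\mathbf X$ built as follows:
\begin{itemize}
\item the digits at levels $\le n'$ are fixed admissible digits, determining a level-$n'$ rectangle $Q'$;
\item the $y$-digits at levels $n'+1,\dots,n'+t$ are all equal to $b$;
\item the deeper digits are arbitrary admissible ones.
\end{itemize}
These points lie in a horizontal strip $S\subset Q'$ of height $M_2^{-(n'+t)}$. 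Since $n'+t<n$, this height is still much larger than $r$. Place the ball $\mathrm B$ of diameter $r$ inside $S$; this is possible because the width of $Q'$ is at least $r$.

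Next I would show that every point $(x,y)\in\mathrm B$ is close to $\mathbf X$. Match the $y$-digits of $y$ exactly at all levels beyond $n'+t$, which is possible because all rows are nonempty. Because row $b$ is full, the $x$-digits at levels $n'+1,\dots,n'+t$ are free, so we can match those digits of $x$. The deeper $x$-digits are forced by the chosen $y$-digits, but this costs only a horizontal error of size at most $M_1^{-(n'+t)}\lesssim M_1^{-t}r$. Hence $\operatorname{dist}(\mathbf x,\mathbf X)\lesssim M_1^{-t}r$ for every $\mathbf x\in\mathrm B$, and the ball porosity constant at scale $r$ is at most $CM_1^{-t}\to0$.

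For the converse in (b), I would treat two cases.
\begin{itemize}
\item If no row of $\mathcal A$ is full, Example \ref{ex:BMcarpet-porous} gives, inside any segment of length $l\in(M_1^{-k_0-1},M_1^{-k_0}]$, a hole rectangle of size comparable to $M_1^{-k_0-2}\times M_2^{-k_0-2}$. Since $M_2^{-k_0-2}\ge M_1^{-k_0-2}$, this rectangle contains a Euclidean ball of radius $\gtrsim_{\mathbf M} l$. Applying this to a horizontal diameter of a given ball yields ball porosity with a uniform $\gamma$.
\item If some row $b$ of $\mathcal A$ is empty, then in every level-$k$ rectangle the horizontal substrip of height $M_2^{-k-1}$ corresponding to row $b$ misses $\mathbf X$ entirely, and this substrip spans the full width $M_1^{-k}$. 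Given a ball of diameter $r$, choose $k$ with $M_2^{-k-1}\sim r$. The ball then meets such a strip, or a gap between level-$k$ rectangles, and the empty strip has width $M_1^{-k}\gtrsim r$, so the ball contains a sub-ball of radius $\gamma r$ disjoint from $\mathbf X$.
\end{itemize}

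I expect the main obstacle to be the failure construction in (b). One has to choose the ball's scale and the intermediate block of full-row digits so that two requirements hold at once. First, the strip $S$ must have height much larger than $r$, so that the ball fits inside it. Second, the free $x$-digits must refine horizontally far below $r$. Both depend on the gap between the exponents $n'\approx\alpha n$ and $n$, and the choice $t\approx(1-\alpha)n/2$ is what makes both hold simultaneously.
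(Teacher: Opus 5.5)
Your part (a) and the failure half of (b) are essentially the paper's argument. In (a) the paper uses the left edge of a level-$k$ rectangle rather than a vertical segment through its middle. In (b) the paper uses the same mechanism: a full row frees the $x$-digits on a block of levels while the $y$-scale stays coarse. It places that block at the top levels $1,\dots,k$, inside the square $Q_k$ of side $M_2^{-k}$ where the first $k$ $y$-digits equal $b_0$. This is essentially your construction with $t=n-n'$ (strip height exactly $r$), and it gives the ratio $(M_2/M_1)^k$ directly, so the balancing $t\approx(1-\alpha)n/2$ is not needed. The paper does not prove the converse in (b). It is only asserted, and the later remarks call the empty-row case trivial and deduce the no-full-row case from Example \ref{ex:BMcarpet-porous}, so your two cases go beyond the text. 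Case 1 is fine: for $l\le 1$ one has $l^\alpha\ge l$, so $\mathrm{PG}_{\gamma,\alpha,l}(x,e_1,e_2)\supset B(x,\gamma l/2)$, and applying anisotropic porosity to a horizontal diameter gives ball porosity.

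Case 2, however, fails as written. With $M_2^{-k-1}\sim r$ you have $M_1^{-k}/r\sim M_2(M_2/M_1)^k\to 0$, so the inequality $M_1^{-k}\gtrsim r$ is false. The empty substrip inside one level-$k$ rectangle, and likewise one missing level-$k$ rectangle, has width $M_1^{-k}\ll r$, so it cannot contain a ball of radius $\gamma r$. Moreover, a ball that merely \emph{meets} such a strip need not contain a sizable piece of it. The missing observation is that these narrow holes line up into full-width strips. Let $I^{(b)}$ be the open $b$-th child of a level-$k$ $y$-interval $I$. Every $y\in I^{(b)}$ has $(k+1)$-th base-$M_2$ digit $b$ in every expansion, and row $b$ is empty, so $(\mathbb R\times I^{(b)})\cap\mathbf X=\emptyset$ independently of $x$. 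Now take $k$ with $2M_2^{-k}\le r<2M_2^{-k+1}$. The vertical diameter of the ball then contains a whole level-$k$ grid interval $I$; if $I$ lies outside $[0,1]$, the open strip over $I$ already misses $\mathbf X$. Otherwise, the point of that diameter at mid-height of $I^{(b)}$ is the center of a ball of radius $M_2^{-k-1}/3>r/(6M_2^2)$ disjoint from $\mathbf X$. In other words, Case 2 is just the one-dimensional porosity of $\pi_y(\mathbf X)$, transported horizontally.
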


\begin{proof}
    
(a)    
Suppose that the alphabet $\mathcal{A}$ is nonempty for each row, we need to show the line porosity condition 
cannot hold. In view of the decimal expansion, the projection of $\mathbf{X}$ onto the $y$-axis is dense in the interval $[0,1]$, 
and actually equals the whole interval $[0,1]$ by compactness. Consider the left boundary of 
any sub-rectangle in the $k$-th generation that is not removed. 
Specifically, consider the line segment $I$ parallel to the $y$-axis defined by
\[
I = \bigl\{ (M_1^{-k}x, M_2^{-k}y) : x = \sum_{j=0}^{k-1} a_1^{(j)} M_1^{j},\ 0 \le y - \sum_{j=0}^{k-1} a_2^{(j)} M_2^{j} \le 1,\quad (a_1^{(j)}, a_2^{(j)}) \in \mathcal{A} \quad \forall\, j\bigr\}.
\]
We have $|I| = M_2^{-k}$. Since this sub-rectangle is not removed and equals a 
translation of $\mathbf{X}$ after rescaling $(x,y) \mapsto (M_1^k x, M_2^k y)$, 
we know that for each $\mathbf{p} \in I$ the line segment parallel to 
the $x$-axis centered at $\mathbf{p}$ with length $2M_1^{-k}$ has nonempty intersection with $\mathbf{X}$.
This forces Cohen's line porosity condition to fail for $\mathbf{X}$, 
because $M_1^{-k} \sim |I|^{\log M_1 / \log M_2}$, 
which is much smaller than $|I|$ as $k \to \infty$. See Figure \ref{fig:non_line_porosity_for_BM_carpet}.

\begin{figure}[htbp]
    \centering
    \includegraphics[width=0.8\textwidth]{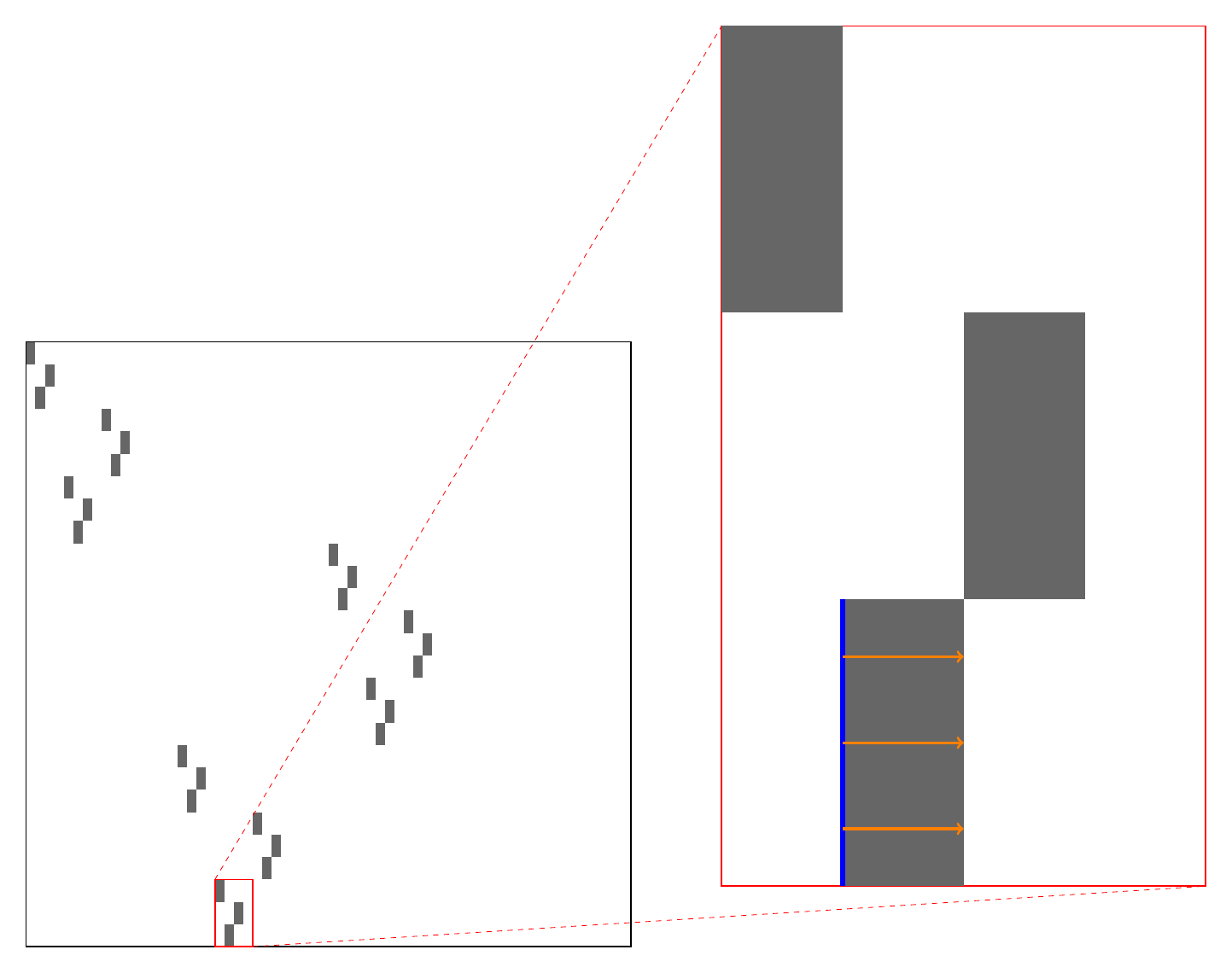}
   \caption{Failure of Cohen's line porosity for the Bedford-McMullen carpets with $M_1=4$, $M_2=3$ and $\mathcal{A}= \{(1,0),(2,1),(0,2)\}$. The black rectangle of size $4^{-3}\times 3^{-3}$ in the right picture has holes after further iteration, but after rescaling it looks identical to the original carpet. Any line segment of length $4^{-3}$ parallel to the $x$-axis, with its left endpoint on the left boundary of this sub-rectangle, has non-empty intersection with the carpet.}    \label{fig:non_line_porosity_for_BM_carpet}
\end{figure}

 (b)   Suppose some row is full, that is, for some $b_0\in \mathbb{Z}_{M_2}$, we know 
    $(a,b_0)\in \mathcal{A}$ for all $a\in \mathbb{Z}_{M_1}$. And we assume every row is non-empty.
    We will show that, for $y_0=\sum_{j=1}^{\infty} b_0M_2^{-j}$ and sufficiently large $k$, 
    any sub-cube of length $5M_2M_1^{-k}$ inside the cube $Q_k$ defined as 
    \[
        Q_k:=[0,M_2^{-k}]\times [y_0,y_0+M_2^{-k}]
    \]
    must contains some point in $\mathbf{X}$, and hence $\mathbf{X}$ is not-box porous. Actually, 
    we will show that for any rectangle $R$ contained in $Q_k$ of the form 
    \[
        R=[\sum_{j=1}^{k} a_jM_1^{-j},
        M_1^{-k}+\sum_{j=1}^{k} a_j]\times 
        [\sum_{j=1}^q b_jM_2^{-j},M_2^{-q}+\sum_{j=1}^q b_jM_2^{-j}]
    \]
    must contain some point in $\mathbf{X}$,
    where $q$ is the unique integer $M_1^{-k}\leq M_2^{-q}<M_1^{-k-1}$, 
    and $a_j\in \mathbb{Z}_{M_1},\,b_j\in \mathbb{Z}_{M_2}$.
    Note that we must have $q>k$ when $k$ is large. 
    Since $R\subset Q_k$, we know $b_j=b_0$ for all $j\leq k$.
    For integer $j\in [k+1,q]$, we further choose $a_j\in \mathbb{Z}_{M_1}$ so that $(a_j,b_j)\in \mathcal{A}$(This is valid 
    since every row is non-empty).
    We define the point $(x^*,y^*)$ as
    \[
        x^*:=\sum_{j=1}^{q} a_jM_1^{-j},\quad y^*=\sum_{j=1}^{q} b_jM_2^{-j}+\sum_{j=q+1}^\infty b_0M_2^{-j}
    \]
    Then it's easy to see that $(x^*,y^*)\in R\cap \mathbf{X}$ by definition, completing 
    the proof. See Figure \ref{fig:no_box_porosity_for_BM_carpet_due_to_full_row}.
\end{proof}

\begin{figure}[htbp]
    \centering
    \includegraphics[width=0.8\textwidth]{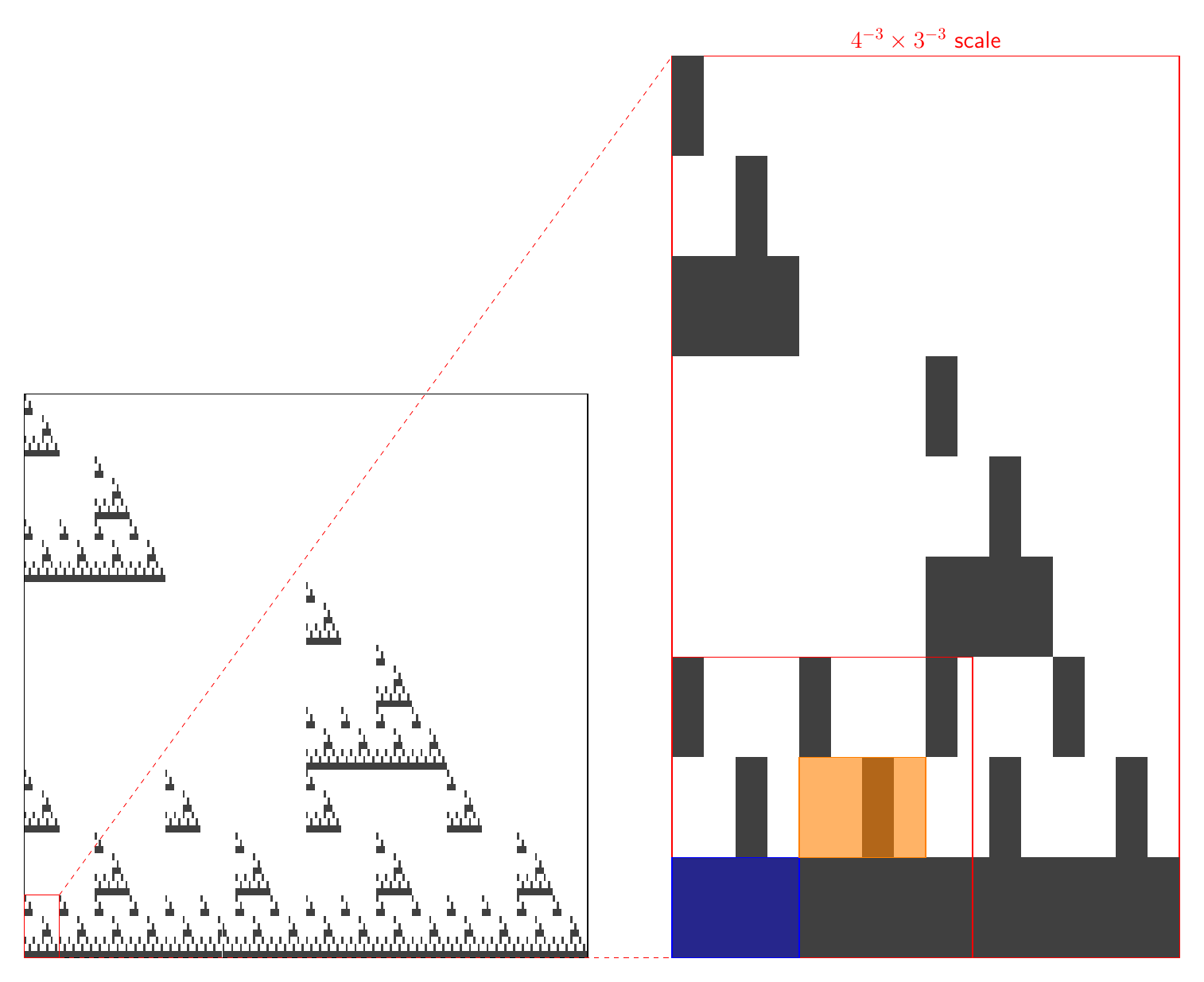}
   \caption{Failure of ball porosity for the Bedford-McMullen carpets
   with $M_1=4$, $M_2=3$ and $\mathcal{A} = \{(0,0), (1,0), (2,0), (3,0), (2,1), (0,2)\}$. 
   In the original coordinate system, the red rectangle corresponds to the region $[0, 3^{-3}] \times [0, 3^{-3}]$. 
   The blue and orange rectangles denote the regions $[0, 4^{-3}] \times [0, 3^{-4}]$ and $[4^{-3}, 2 \cdot 4^{-3}] \times [3^{-4}, 2 \cdot 3^{-4}]$, 
   both the blue and orange rectangles necessarily intersect $\mathbf{X}$. 
   In the $k$-th level iteration, the sub-cube which has empty intersection with $\mathbf{X}$ must have scale $\lesssim M_1^{-k}$, much smaller than $M_2^{-k}$. }    
   \label{fig:no_box_porosity_for_BM_carpet_due_to_full_row}
\end{figure}

For the anisotropic rescaled BM carpet, the same argument shows that 
\begin{proposition}
    Let $\mathbf{Y}$ be the Bedford-McMullen carpet iterated from the alphabet $\mathcal{B}\subset \mathbb{Z}_{\mathbf M}$. 
    Let $\widetilde{\mathbf{Y}}=\{(x/h,y/h^\alpha):(x,y)\in \mathbf{Y}\}$.
    \begin{enumerate}[label=\emph{(\alph*)}]
        \item   If $\mathcal{B}$ is \textbf{nonempty} for each column, then for any 
                $\beta\in (0,\alpha]$, there is no uniform 
                porous constant $\gamma$ for the line porosity condition at scale $h^{-\beta}$ 
                for $\widetilde{\mathbf{Y}}$ as $h\to 0$.
        \item   If some column of $\mathcal{B}$ is \textbf{full}, and every column is \textbf{non-empty,} 
                then for any $\beta\in (0,\alpha]$, there is no uniform 
                porous constant $\gamma$ for the ball porosity condition at scale $h^{-\beta}$ 
                for $\widetilde{\mathbf{Y}}$ as $h\to 0$. This is also equivalent.
\end{enumerate}
\end{proposition}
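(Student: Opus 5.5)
The plan is to transport the proof of Proposition \ref{prop:Failure of line/ball porosity for BM carpet} through the anisotropic rescaling $\Lambda_h:(x,y)\mapsto(x/h,y/h^\alpha)$. The guiding observation is that under $\Lambda_h$ the roles of the two axes are reversed at large scales. A level-$k$ rectangle of size $M_1^{-k}\times M_2^{-k}$ becomes a rectangle of size $M_1^{-k}h^{-1}\times M_2^{-k}h^{-\alpha}$. Writing $h=M_1^{-n}$, so that $h^\alpha=M_2^{-n}$, this is $M_1^{n-k}\times M_2^{n-k}$. For $k<n$ the rescaled rectangles are therefore wider horizontally than vertically, with $M_1^{n-k}=(M_2^{n-k})^{1/\alpha}$. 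Hence, inside $\widetilde{\mathbf Y}$ at macroscopic scales, the short direction is vertical, and columns of $\mathcal B$ play the role that rows of $\mathcal A$ played before. The condition "scale $h^{-\beta}$ with $\beta\in(0,\alpha]$" places us in the macroscopic range $1\ll r\le h^{-\alpha}$, where the rescaled carpet still has this self-affine structure.

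For (a), I would fix a scale $r=h^{-\beta}$ and choose $k$ with $M_1^{n-k}\sim r$, so that $k<n$. Then I consider a surviving level-$k$ rectangle $R$ of $\mathbf Y$. Take the horizontal segment $I$ along its bottom edge in $\widetilde{\mathbf Y}$; its length is $M_1^{n-k}\sim r$. Because every column of $\mathcal B$ is nonempty, the projection of $\mathbf Y$ onto the $x$-axis is all of $[0,1]$. By self-affinity, the same holds for $\mathbf Y\cap R$ projected onto the horizontal side of $R$. So every vertical segment through a point $p\in I$ of length $2M_2^{n-k}$ meets $\widetilde{\mathbf Y}$. Since $M_2^{n-k}=(M_1^{n-k})^{\alpha}\sim r^\alpha\ll r$ as $h\to 0$, no ball $B(p,\gamma r)$ with $p\in I$ avoids $\widetilde{\mathbf Y}$ once $\gamma r>2r^{\alpha}$. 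Line porosity therefore fails uniformly. A minor point to check is that $r^\alpha/r\to0$ for every $\beta\in(0,\alpha]$; this holds because $r=h^{-\beta}\to\infty$.

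For (b), I would mirror the full-row argument with the axes swapped. Fix a full column $a_0$ and set $x_0=\sum_j a_0M_1^{-j}$. Consider the cube $\widetilde Q=\Lambda_h\big([x_0,x_0+M_1^{-k}]\times[0,M_1^{-k}h^{\alpha-1}]\big)$, which is roughly a square of side $M_1^{n-k}$ in the rescaled picture. I would show that every sub-rectangle of it at a much smaller scale meets $\widetilde{\mathbf Y}$. The construction is to pick the vertical digits freely, which is allowed because column $a_0$ is full. I then choose horizontal digits beyond level $k$ in the pattern that column nonemptiness permits, and complete the point with the infinite tail $a_0$. This is exactly the point $(x^*,y^*)$ from the earlier proof, with coordinates swapped.

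For the converse ("this is also equivalent"), I would treat two cases. If no column of $\mathcal B$ is full, then each rescaled rectangle contains a genuine hole of comparable size in both directions, and iteration gives ball porosity. If some column is empty, an entire vertical strip is removed, which also yields holes. The main obstacle is the bookkeeping of the scale $q$ in (b): one needs the integer with $M_2^{-q}\sim M_1^{-k}$ to lie in the admissible range, and one must show that the holes allowed by the full-column cube have size only about $r^{\alpha}$ relative to $r$. I expect this to go through as in Proposition \ref{prop:Failure of line/ball porosity for BM carpet}(b), provided the rescaled scale parameters are matched carefully.
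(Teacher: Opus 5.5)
Your proposal is correct and follows the paper's route: the paper only says that the proof of Proposition~\ref{prop:Failure of line/ball porosity for BM carpet} carries over. Your transport through $(x,y)\mapsto(x/h,y/h^\alpha)$ is exactly that argument with the axes swapped: level-$k$ cells become $M_1^{n-k}\times M_2^{n-k}$ boxes with $M_2^{n-k}=(M_1^{n-k})^\alpha$, horizontal bottom edges replace vertical left edges in (a), and a vertical full-column stack together with column non-emptiness replaces the full-row strip in (b). Two small repairs are needed: take $x_0=\sum_{j=1}^{k}a_0M_1^{-j}$ rather than the infinite sum, since otherwise $[x_0,x_0+M_1^{-k}]$ leaves the level-$k$ interval with digits $a_0,\dots,a_0$ (for $a_0=M_1-1$ it is $[1,1+M_1^{-k}]$, essentially disjoint from the carpet; the paper's $y_0$ has the same slip); and in the digit construction it is the vertical digits beyond level $k$ that you choose, the horizontal ones being prescribed by the sub-box, with column non-emptiness supplying an admissible vertical partner for each prescribed horizontal digit.
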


\begin{remark}
If there is an empty row in $\mathcal A$, then $\mathbf{X}$ 
is trivially ball-porous.
And in this case the projection of $\mathbf{X}$ onto the $y$-axis is contained in a Cantor set, 
so one can easily apply the one-dimensional FUP to obtain the same result for $\mathbf{X}$ as in Theorem \ref{thm:cfup}.
\end{remark}

\begin{remark}
    If every row of $\mathcal{A}$ is not full, then $\mathbf{X}$ is 
    $\alpha$-anisotropic porous as shown in example \ref{ex:BMcarpet-porous}, hence also ball porous by the equivalence condition. If furthermore $\mathcal A$ is non-empty for each row, then $\mathbf X$ is not line porous of Cohen.  Conversely, the line porous set $\mathbf C^2$, where $\mathbf C$ is the one-dimensional $1/3$-Cantor set, is not anisotropic porous for any $\alpha\neq 1$.  
\end{remark}

\vskip2mm
\noindent\textbf{Acknowledgements} 
Long Jin and Hong Zhang are supported by National Key R\&D Program of China No. 2022YFA1007400. Long Jin is also supported by National Natural Science Foundation of China  No. 12525106 and New Cornerstone Investigator Program 100001127.
An Zhang is partially supported by National Key R\&D Program of China No. 2024YFA1015300, Beijing Natural Science Foundation No. 1242009, National Natural Science Foundation of China No. 11801536, the China Scholarship Council No. 202506020208 and the Fundamental Research Funds for the Central Universities. 
The authors would like to thank Stéphane Nonnenmacher, Zuoqin Wang and Jingbo Xu for inspiring discussions.

\noindent\textbf{Statements} 
The authors have no relevant financial or non-financial interests to disclose. Data sharing is not applicable to this article as no datasets were used.
\bibliographystyle{alpha}
\bibliography{ref}

\end{document}